\newtheorem{thm}{Theorem}[section]
\newtheorem{lemma}[thm]{Lemma}
\newtheorem{rem}[thm]{Remark}
\newtheorem{defn}[thm]{Definition}
\newtheorem{claim}[thm]{Claim}
\newtheorem{prop}[thm]{Proposition}
\newtheorem{cor}[thm]{Corollary}
\newcommand{\Q}{\mathbb Q}
\newcommand{\R}{\mathbb R}
\newcommand{\C}{\mathbb C}
\renewcommand{\H}{\mathbb H}
\newcommand{\Hh}{\mathcal H}
\newcommand{\N}{\mathbb N}
\newcommand{\D}{\mathbb D}
\newcommand{\E}{\mathbb E}
\newcommand{\Lo}{\text{Loop}}
\newcommand{\Aa}{\mathbb A}
\newcommand{\B}{\mathbb B}
\newcommand{\A}[1]{\Aa_{-#1,-#1+2\lambda}}
\newcommand{\F}{\mathcal F}
\newcommand{\eps}{\epsilon}
\newcommand{\CR}{\operatorname{CR}}
\DeclareMathOperator{\SLE}{SLE}
\g@addto@macro{\endabstract}{\@setabstract}
\newcommand{\authorfootnotes}{\renewcommand\thefootnote{\@fnsymbol\c@footnote}}%
\let \epsilon \varepsilon
\numberwithin{equation}{section}
\subjclass[2010]{60G15; 60G60;60D05; 60J67;60K35; 81T40}
\keywords{Gaussian free field; local set; Conformal loop ensemble; Schramm-Loewner evolution}
\begin{document}
	\title{Two-valued local sets of the 2D continuum Gaussian free field: connectivity, labels, and induced metrics}
\maketitle
\begin{center}
	\vspace{-0.025\textheight}
	\normalsize
	\authorfootnotes
	Juhan Aru \footnote{Department of Mathematics, ETH Zürich, Rämistr. 101, 8092 Zürich, Switzerland}, Avelio Sepúlveda\footnote{Univ Lyon, Université Claude Bernard Lyon 1, CNRS UMR 5208, Institut Camille Jordan, 69622 Villeurbanne, France }
\end{center}
\setcounter{footnote}{0}
		 	\vspace{-0.015\textheight}
	\begin{abstract}
	 	We study two-valued local sets, $\Aa_{-a,b}$, of the two-dimensional continuum Gaussian free field (GFF) with zero boundary condition in simply connected domains. Intuitively, $\Aa_{-a,b}$ is the (random) set of points connected to the boundary by a path on which the values of the GFF remain in $[-a,b]$. For specific choices of the parameters $a, b$ the two-valued sets have the law of the CLE$_4$ carpet, the law of the union of level lines between all pairs of boundary points, or, conjecturally, the law of the interfaces of the scaling limit of XOR-Ising model.
	 	
	 	Two-valued sets are the closure of the union of countably many SLE$_4$ type of loops, where each loop comes with a label equal to either $-a$ or $b$. One of the main results of this paper describes the connectivity properties of these loops. Roughly, we show that all the loops are disjoint if $a+b \geq
	 	4\lambda$, and that their intersection graph is connected if $a + b <
	 	4\lambda$. This also allows us to study the labels (the heights) of the loops. We prove that the labels of the loops are a function of the set $\Aa_{-a,b}$ if and only if $a\neq b$ and $2\lambda \leq a+b < 4\lambda$ and that the labels are independent given the set if and only if $a = b = 2\lambda$. We also show that the threshold for the level-set percolation in the 2D continuum GFF is $-2\lambda$.
	 	
	 	Finally, we discuss the coupling of the labelled CLE$_4$ with the GFF. We characterise this coupling as a specific local set coupling, and show how to approximate these local sets. We further see how in these approximations the labels naturally encode distances to the boundary.
	\end{abstract}

	\section{Introduction}
	Two-valued local sets (TVS) of the two-dimensional Gaussian free field (GFF), denoted $\Aa_{-a,b}$, were introduced in \cite{ASW}. They are the two-dimensional analogue of the exit times from an interval $[-a,b]$ by a standard Brownian motion. Intuitively, they correspond to the set of points of the domain that can be connected to the boundary via a path on which the GFF takes values only in $[-a,b]$. TVS are tightly linked to the study of the 2D GFF: for $\lambda=\sqrt{\pi/8}$ the set $\Aa_{-2\lambda, 2\lambda}$ describes the outer boundaries of the outermost sign clusters of the 2D GFF \cite{LupuCLE, QW}, and Qian and Werner used $\Aa_{-\lambda,\lambda}$ to couple the free boundary and zero boundary GFFs \cite{QW17}. TVS also appear naturally in other statistical physics models: for example it is known that CLE$_4$ has the law of $\Aa_{-2\lambda, 2\lambda}$ \cite{MS,ASW} and moreover it is conjectured that $\Aa_{-a,b}$ with $a+b = 2(1+\sqrt{2})\lambda$ should be the scaling limit of interfaces corresponding to the XOR-Ising model \cite{Wilson}.

	As the 2D GFF is not defined pointwise, one has to give meaning to TVS. This was done in \cite{ASW,ALS1},  using the concept of local sets.  This concept appeared first in the study of Markov random field in the 70s and 80s (see in particular \cite{Roz}) and it was reintroduced in \cite{SchSh2} in the context of the coupling between the GFF and SLE$_4$. Local sets are the natural generalisation of stopping times for multi-dimensional time. More precisely, take $(\Gamma,A)$ a coupling between $\Gamma$ a (zero-boundary) GFF in a domain $D$ and $A\subseteq \overline D$ a closed set. We say that $A$ is a local set of $\Gamma$ if, conditionally on $A$, the law of $\Gamma$ restricted to $D \backslash A$ is equal the sum of $\Gamma^A$, a GFF in $D \backslash A$, and a (conditionally) independent a random harmonic function $h_A$ defined in $D \backslash A$. This harmonic function can be interpreted as the harmonic extension to $D \backslash A$ of the values of the GFF on $\partial A$.
						
	In \cite{ASW} two-valued sets were defined via their expected properties and their construction was provided using SLE$_4$ type of level lines. More precisely, it was shown that if $a,b> 0 $, $a+b\geq 2\lambda$ and $\Gamma$ is a GFF in a simply connected domain $D$, then there exists a unique local set $\Aa_{-a,b}$ that satisfies
	\begin{enumerate}
		\item $h_{\Aa_{-a,b}}$ is constant in every connected component of $D \backslash \Aa_{-a,b}$ with values in $\{-a,b\}$.
		\item The set $\Aa_{-a,b}$ is a thin local set. In the current setting it means that for any smooth test function $f$, the random variable $(\Gamma,f)$ is almost surely equal to  $ (\Gamma^{\Aa_{-a,b}}, f)+\int_{D \backslash {\Aa_{-a,b}} }  h_{\Aa_{-a,b}}(x) f(x) dx$.
		\item $\Aa_{-a,b}\cup \partial D$ has a finite number of connected components.
	\end{enumerate}
	
	In the same article several basic properties were proved, all of which are intuitive from the heuristic description as a set of points connected to the boundary via a path on which GFF takes values in $[-a,b]$. For example, $(\Aa_{-a,b}, \Gamma^{\Aa_{-a,b}}, h_{\Aa_{-a,b}})$ is measurable function of the GFF $\Gamma$. Also, the sets $\Aa_{-a,b}$ are monotone with respect to  $a$ and $b$, in other words, if $[-a,b]\subseteq [-a',b']$, then $\Aa_{-a,b}\subseteq \Aa_{-a',b'}$. Furthermore, $\Aa_{-a,b}\cup \partial D$ is connected. Thus, all connected components of $D \backslash \Aa_{-a,b}$ are simply-connected. A notable difference to the continuous setting also appeared in \cite{ASW}:when $a+b < 2\lambda$, there are no local sets $A$ satisfying (1), (2) and (3): this roughly just says that the GFF is so rough that you cannot move away from the boundary without making a fluctuation of at least the size of $2\lambda$. 
	
	In the current article, we are interested in the study of the geometric properties of the set $\Aa_{-a,b}$ and its complement. We describe in more detail its size, its connectivity, but also answer the question whether given the set, one can recover the heights. 
	
	Before describing our results in more detail, let us also mention the recent work of \cite{GP}. Whereas some of the connectivity properties we prove are directly related to versions of SLE$_4$ processes, in \cite{GP} the authors study connectivity properties of the loops generated by a SLE$_\kappa$ process with $\kappa \in (4,8)$, using very different techniques from ours.
	
	\subsection{An overview of results} Let us now state some of the results that we shall derive, several other smaller results can be found in the main text. Throughout the present section, $D$ can be thought of as the unit disk.
	
	The bulk of the paper deals describes the ``connectivity properties'' of the loops of $\Aa_{-a,b}$. Here, by \textit{loops}, we mean the boundaries of the connected  components of $D\backslash \Aa_{-a,b}$. Indeed, it follows from the construction in \cite{ASW} that these boundaries are Jordan curves. Loops of $\Aa_{-a,b}$ are always locally SLE$_4$ curves and thus of Hausdorff dimension $3/2$. 
	
	We now loosely state our main result, see Theorem \ref{mainthm} for a rigorous statement and Figure \ref{SMT} for an illustration:
	\begin{itemize}
		\item If $a + b = 2\lambda$, one can pass from each loop to any other via a finite number of loops such that every two consecutive loops share a boundary segment.
		\item If $2\lambda < a+b < 4\lambda$, one can pass from each loop to any other one via a finite number of loops such that every two consecutive loops intersect.
		\item If $a+b \geq 4\lambda$, all loops are pairwise disjoint.
	\end{itemize}
	
	\begin{figure}[h!]\label{SMT}
		\includegraphics[scale=0.5]{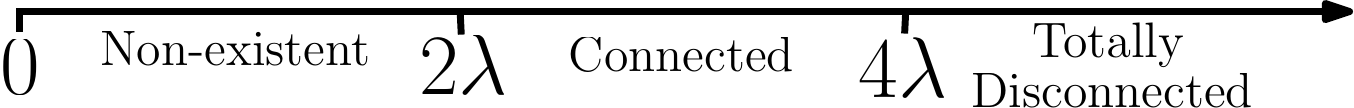}
		\caption{Connectivity properties of the loops of $\Aa_{-a,b}$ given $a+b$. We also study the behaviour at the critical points which correspond to that at its right.}
	\end{figure}
	
	The central idea in the proof of the first two cases is to provide a particular construction of $\Aa_{-a,b}$ where it is easy to see that the required property holds. This is a recurrent technique in our proofs. The demonstration of the third statement uses the fact that the loops of $\Aa_{-2\lambda,2\lambda}$ have the law of a CLE$_4$ and that CLE$_4$ loops are non-intersecting.
	
	\begin{figure}[h!]
		\centering
	\includegraphics[scale=0.06]{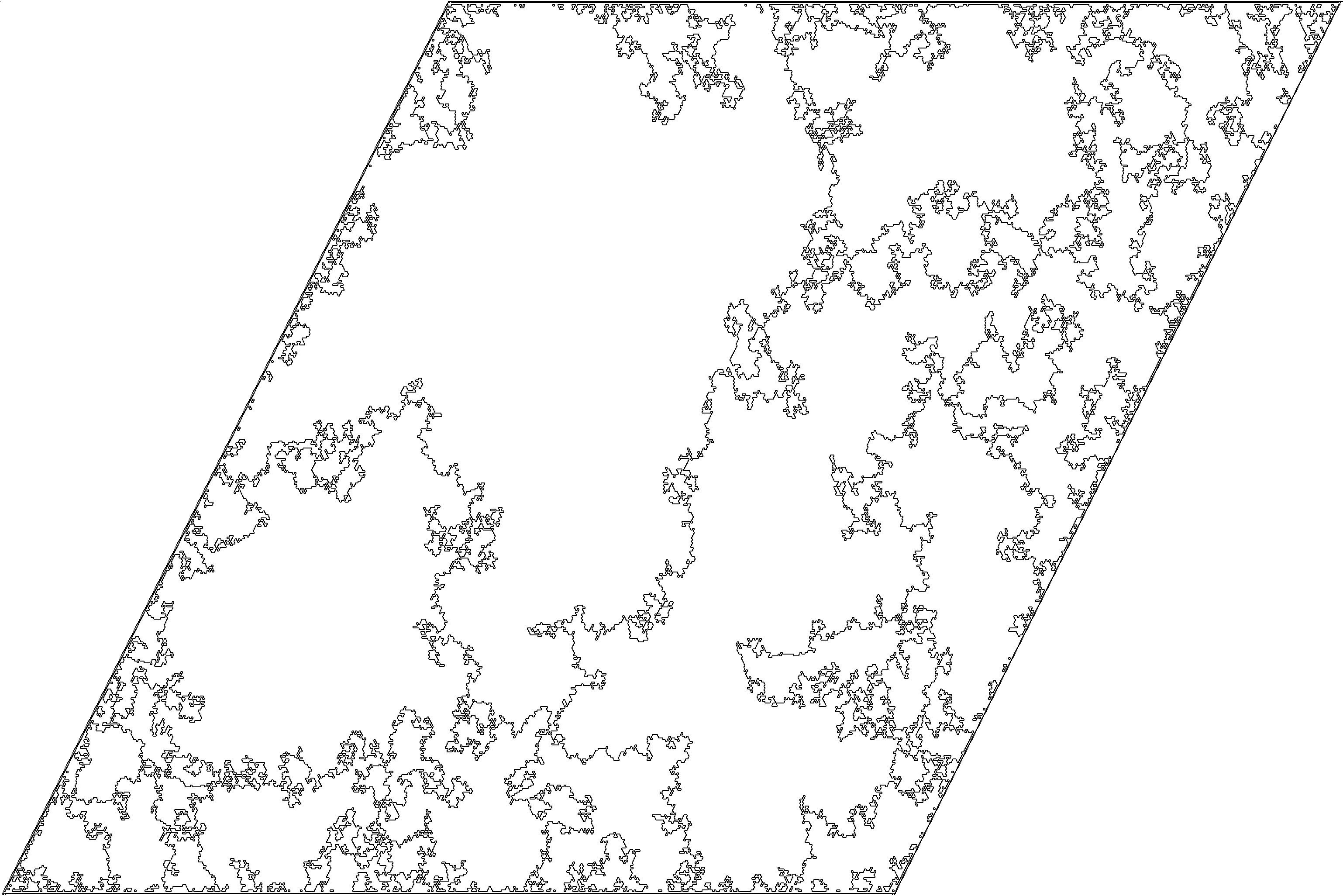}
		\includegraphics[scale=0.13]{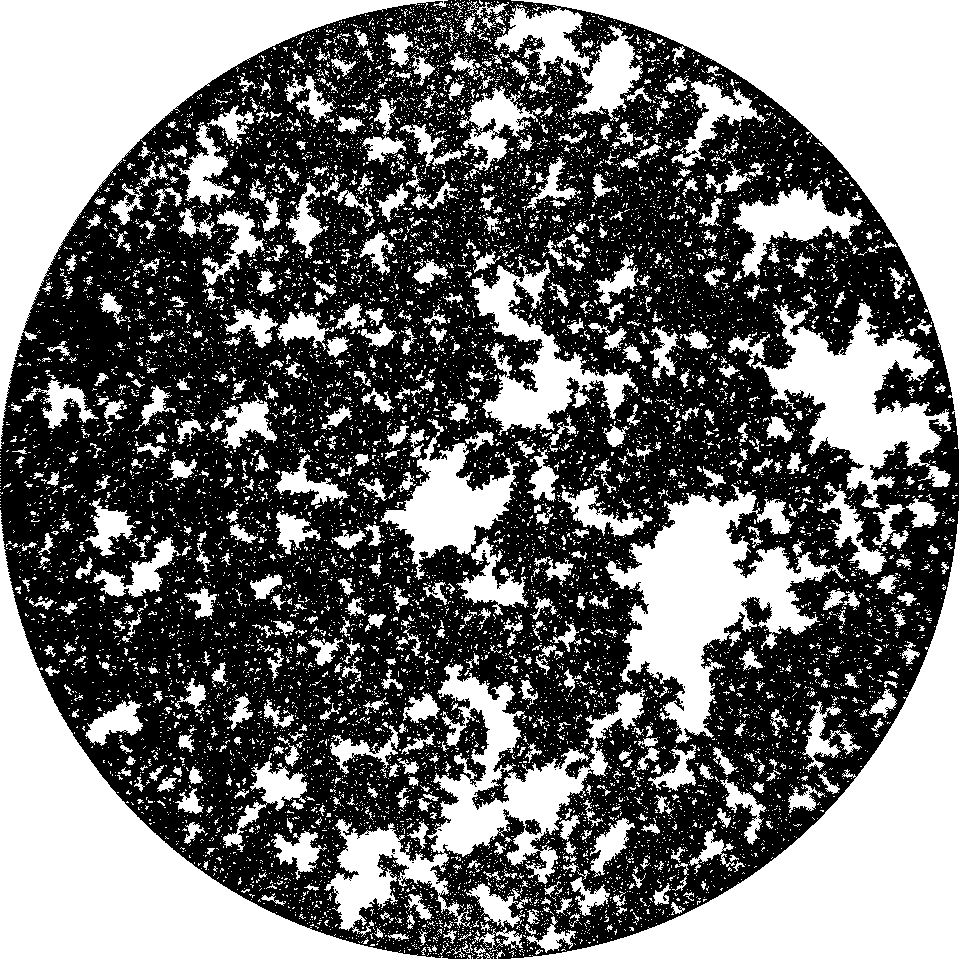}
		\caption{On the left a simulation of $\Aa_{-\lambda,\lambda}$ done by Brent Werness. On the right a simulation of $\Aa_{-2\lambda,2\lambda}$ done by David Wilson. Note the difference in the connectivity properties of loops.}
	\end{figure}
	
	A direct consequence of the main theorem concerns the SLE$_4$ fan. The SLE$_4$ fan is the equivalent of the SLE$_\kappa$ fan defined in \cite{MS1} for other values of $\kappa$ and it corresponds to a closed union of all level lines of $\Gamma$ between two boundary points. Due to the roughness of $\Gamma$, this is a random fractal set of $0$ Lebesgue measure. Again, the connected components of its complement are surrounded by loops and we show that the intersection graph of these loops is connected. See Corollary \ref{Cor fan} for a precise proof and statement.
	
	The main theorem also gives us tools to study the height profile of the two-valued sets. More precisely, for any loop $\ell$ of $\Aa_{-a,b}$, the label of $\ell$ is defined to be the value of $h_{\Aa_{-a,b}}$ inside this loop. One can then ask the following question \cite{ASW}: \textit{when is $h_{\Aa_{-a,b}}$ a measurable function of the set $\Aa_{-a,b}$?}.  It was known \cite{MS, ASW} that in the case of $\Aa_{-2\lambda, 2\lambda}$, the law of the labels conditionally on $\Aa_{-2\lambda, 2\lambda}$ is given by independent fair coin tosses. In Proposition \ref{non-independence} we show that this independence property is true if and only if $a = b = 2\lambda$. Moreover, in Proposition \ref{mes label} we show that not only independence, but also measurability of the labels fails for $a+b \geq 4\lambda$:
	
	\begin{itemize}
		\item If $2\lambda \leq a+b <4 \lambda$ and $a\neq b$, the labels of $\Aa_{-a,b}$ are a measurable function of the set $\Aa_{-a,b}$.
		\item If $\lambda \leq a <2\lambda$, the labels of $\Aa_{-a,a}$ are a measurable function of the set $\Aa_{-a,a}$ and the label of the loop surrounding $0$.
		\item If $a+b\geq 4\lambda$, the labels of $\Aa_{-a,b}$ cannot be recovered only knowing $\Aa_{-a,b}$ and any finite number of labels.
	\end{itemize}
	
	Another result we would like to mention corresponds to the ``level-set percolation'' of the 2D continuum GFF. More precisely, we say that there is ``level set percolation'' at height $(-a,b)$ if one can almost surely join any two boundary points via a continuous path inside the domain on which the GFF has values in $[-a,b]$. We show in Proposition \ref{Perco} that ``level set percolation'' occurs if and only if $\min(a,b) \geq 2\lambda$.  
	
	Finally, we construct and characterise, $(\Gamma,\B_r)$, a local set coupling where $\B_r$ has the same law as $\Aa_{-2\lambda, 2\lambda-r}$, for $0<r<2\lambda$, and such that the label of any loop $\ell$ encodes its distance to the boundary (see Proposition \ref{prop:: existence and uniqueness Br}). More precisely, the label of each loop $\ell$ is given by $2\lambda-r d(\ell, \partial D)$, where $d(\ell, \partial D)$ is the minimal length of a path of intersecting loops that connects $\ell$ to the boundary (see Proposition \ref{prop:lint}). 

	When we let $r\to 0$ in the latter coupling, we recover the coupling with the labelled CLE$_4$. Labelled CLE$_4$ was introduced in \cite{WernerWu} via a  conformally invariant growth-process, and a coupling to the GFF was proved in \cite{WaWu}. In fact, the labels of the CLE$_4$ are given by $2\lambda - t$, where $t$ is exactly the time-parameter in the conformally invariant growth-process mentioned above. In some sense, this coupling describes to how the outer-boundary of the outer-most sign cluster of the GFF changes after the Lévy transformation of the GFF, and we will explain how the sets $\B_r$ are in some sense natural approximations to a Lévy transform. 
	
	Lévy transform is well-defined in the case of the metric graphs \cite{LuW} and our  axiomatic characterisation of the coupling with the labelled CLE$_4$ in Section \ref{B0} allows us to study convergence of this Lévy transform. Our two aims in studying the labelled CLE$_4$ were as follows: to show that the labels in this coupling are measurable w.r.t. the underlying set, and to prove the existence of a conformally invariant metric, that has been mentioned in \cite{WaWu, WernerWu}. The sets $\B_r$ described above satisfy both of these properties, however we are unable to deduce the same statements in the limit.
	
	The rest of the paper is roughly structured as follows: we start with preliminaries on the GFF and the local sets in Section 2. In Section 3, we introduce the sets $\Aa_{-a,b}$, recall their construction and study some of their basic properties. In this section, we also prove a new construction of $\Aa_{-a,2\lambda - a}$ and study the level set percolation. In Section 4, we study the connectivity properties of the loops of $\Aa_{-a,b}$ and in Section 5 we address the question of the measurability of the labels $\Aa_{-a,b}$. Finally, in Section 6 we study the local set coupling with the labelled CLE$_4$.

	\section{Preliminaries on the Gaussian free field and local sets}
	
	Let $D\subseteq \R^2$ denote a bounded, open and simply connected planar domain. By conformal invariance, we can always assume that $D$ is equal to $\D$, the unit disk. Recall that the (zero boundary) Gaussian free field (GFF) in $D$  can be viewed as a centred Gaussian process $\Gamma$, 
	indexed by the set of continuous functions in $D$, such that if $f, g$ are continuous functions
	\[\E [(\Gamma,f) (\Gamma,g)]  =  \iint_{D\times D} f(x) G_D(x,y) g(y) d x d y. \] 
	Here $G_D$ is the Green's function (with Dirichlet boundary conditions) in $D$, that is by convention normalized such that $G_D(x,y)\sim \frac{1}{2\pi}\log(1/|x-y|)$ as $x \to y$. For this choice of normalization of $G$ (and therefore of the GFF), we set  \[\lambda=\sqrt{\pi/8}.\]Sometimes, other normalizations are used in the literature: If $G_D (x,y) \sim c \log(1/|x-y|)$ as $x \to y$, then $\lambda$ should 
	be taken to be $(\pi/2)\times \sqrt {c}$. 
	
	The Gaussian free field satisfies a spatial Markov property, and in fact, it also satisfies a strong spatial Markov property. To formalise this concept Schramm and Sheffield introduced local sets in \cite{SchSh2}. They can be thought of as a generalisation of stopping times to a higher dimension. 
	
	\begin{defn}[Local sets]
		Consider a random triple $(\Gamma, A,\Gamma_A)$, where $\Gamma$ is a  GFF in $D$, $A$ is a random closed subset of $\overline D$  and $\Gamma_A$ a random distribution that can be viewed as a harmonic function, $h_A$, when restricted to $D \backslash A$.
		We say that $A$ is a local set for $\Gamma$ if conditionally on $A$ and $\Gamma_A$, $\Gamma^A:=\Gamma - \Gamma_A$ is a  GFF in $D \backslash A$. 
	\end{defn}
	
	Here, by a random closed set, we mean a probability measure on the space of closed subsets of $\overline D$, endowed with the Hausdorff metric and its corresponding Borel $\sigma-$algebra. For random distributions we use the topology of the Sobolev space $\Hh^{-1}$. 
	
	All the local sets we consider in this paper are going to satisfy the following two assumptions, thus for the rest of the paper we may as well take them to be part of the definition of local sets, simplifying some non-important, but technical aspects:
	\begin{itemize}
		\item we work with local sets $A$ that are measurable functions of $\Gamma$;
		\item and such that $A\cup \partial D$ is connected. 
	\end{itemize}

The first part allows us to talk about ``exploring'' a specific local set of the free field. The second claim implies that all connected components of $D\backslash A$ are simply-connected and that the only polar local set is the empty set.

	Let us list a few properties of local sets : 
	\begin{lemma}[Basic properties of local sets]\label{BPLS}    $\ $
		\begin {enumerate}
		\item Any local set can be coupled in a unique way with a given GFF: Let be $(\Gamma,A,\Gamma_A,\widehat \Gamma_A)$, where $(\Gamma,A,\Gamma_A)$ and $(\Gamma,A,\widehat \Gamma_A)$ satisfy the conditions of this definition. Then, a.s. $\Gamma_A=\widehat \Gamma_A$. Thus, being a local set is a property of the coupling $(\Gamma,A)$, as  $\Gamma_A$ is a measurable function of $(\Gamma,A)$. 
		\item When $A$ and $B$ are local sets coupled with the same GFF $\Gamma$, and that $(A, \Gamma_A)$ and $(B, \Gamma_B)$ are conditionally independent given $\Gamma$, then $A \cup B$ is also a local set coupled with $\Gamma$. Additionally, $B\backslash A$ is a local set of $\Gamma^A$ with $(\Gamma^A)_{B\backslash A} = \Gamma_{B\cup A}-\Gamma_{A}$.
		\item Let $(\Gamma,A_n)$ be such that for all $n\in \N$ $(\Gamma,A_n)$ is a local set coupling, and for some $k\in \N$  almost surely $A_n\cup \partial D$ has less than $k$ connected components. Then $(\Gamma,A_n,\Gamma_{A_n})$ is tight and any subsequential limit is a local set coupling.
				
		If moreover the sets $A_n$ are increasing in $n$, then, $\overline{\bigcup  A_n}$ is also a local set and $\Gamma_{A_n}\to\Gamma_{\overline{\bigcup  A_n}} $ in probability in $\Hh^{-1}(D)$ as $n\to \infty$. 
		\item Let $(\Gamma,A_n)$  be such that for all $n\in \N$ $(\Gamma,A_n)$ is a local set coupling and the sets $A_n$ are decreasing in $n$. Then, $\bigcap A_n$ is also a local set and $\Gamma_{A_n}\to\Gamma_{\bigcap_n  A_n} $ a.s. as $n\to \infty$. 
	\end{enumerate}
\end{lemma}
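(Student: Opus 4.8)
The plan is to set $A:=\bigcap_n A_n$, prove that $\Gamma_{A_n}$ converges almost surely in $\Hh^{-1}$ to a limit $\Gamma_A$, and then recognise $(\Gamma,A,\Gamma_A)$ as a local set coupling by a soft argument, so that $\Gamma_A=\Gamma_{\bigcap_n A_n}$ by the uniqueness in Lemma~\ref{BPLS}(1). Two preliminaries are essentially free: since the $A_n$ are nested closed subsets of the compact set $\overline D$, one has $A_n\to A$ in the Hausdorff metric; and $A\cup\partial D=\bigcap_n(A_n\cup\partial D)$ is connected, being a decreasing intersection of connected compacts. Granting the almost sure $\Hh^{-1}$-convergence $\Gamma_{A_n}\to\Gamma_A$, the harmonicity of $\Gamma_A$ on $D\setminus A$ follows because every compact $K\subseteq D\setminus A$ satisfies $K\subseteq D\setminus A_n$ for $n$ large (Hausdorff convergence), so on $K$ the limit $\Gamma_A$ is an $\Hh^{-1}$-limit of the harmonic functions $h_{A_n}$; and the fact that, conditionally on $(A,\Gamma_A)$, $\Gamma-\Gamma_A$ is a GFF in $D\setminus A$ follows from closedness of the set of local set couplings under joint Hausdorff $\times\,\Hh^{-1}$ convergence — exactly the content of Lemma~\ref{BPLS}(3), whose hypothesis on the number of connected components is used there only to extract a subsequential limit (i.e.\ for tightness), which is superfluous once convergence is already in hand.

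So the real work is the convergence of $\Gamma_{A_n}$, and its engine is a standard feature of local sets that are measurable functions of $\Gamma$: if $A_m\subseteq A_n$ are two such local sets, then $A_n\setminus A_m$ is a local set of the GFF $\Gamma^{A_m}$ in $D\setminus A_m$, with harmonic function $(\Gamma^{A_m})_{A_n\setminus A_m}=\Gamma_{A_n}-\Gamma_{A_m}$. Two consequences. First, conditionally on $\mathcal F_{A_m}:=\sigma(A_m,\Gamma_{A_m})$, the distribution $\Gamma_{A_n}-\Gamma_{A_m}$ is the harmonic function attached to a local set of a \emph{centered} GFF, so $\E[\Gamma_{A_n}-\Gamma_{A_m}\mid\mathcal F_{A_m}]=0$; together with the elementary identity $\E[(\Psi_B,f)^2]=\iint_U fG_Uf-\E[\iint_{U\setminus B}fG_{U\setminus B}f]$ (Pythagoras plus the conditional variance of a GFF, valid for any local set $B$ of a centered GFF $\Psi$ in $U$) this yields, for $n\le m$,
\[
\E\big[(\Gamma_{A_n}-\Gamma_{A_m},f)^2\big]\;=\;\E\big[(\Gamma_{A_n},f)^2\big]-\E\big[(\Gamma_{A_m},f)^2\big]\;=\;W_m-W_n,\qquad W_k:=\E\Big[\iint f\,G_{D\setminus A_k}\,f\Big].
\]
As $D\setminus A_k$ increases to $D\setminus A$, the nonnegative numbers $W_k$ are nondecreasing (Green's functions increase with the domain, even as quadratic forms) and bounded by $\iint fG_Df$, hence Cauchy, so $((\Gamma_{A_n},f))_n$ is $L^2$-Cauchy. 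Second, for each fixed $N$ the family $(\Gamma_{A_n}-\Gamma_{A_N})_{n\le N}$ is a discrete martingale inside $\Gamma^{A_N}$ — it is the sequence of harmonic functions of the increasing local sets $A_n\setminus A_N$ — so Doob's $L^2$-maximal inequality gives $\E[\sup_{N'\le n\le N}(\Gamma_{A_n}-\Gamma_{A_N},f)^2]\le 4\,(W_N-W_{N'})\to 0$ as $N',N\to\infty$, which together with the $L^2$ convergence forces almost sure convergence of $(\Gamma_{A_n},f)$. Running the same estimates with the $\Hh^{-1}$-norm in place of a single test function — legitimate because $\Gamma_{A_n}=\E[\Gamma\mid\mathcal F_{A_n}]$ and $\Gamma\in L^2(\Hh^{-1})$, so the $\Gamma_{A_n}$ are uniformly bounded in $L^2(\Hh^{-1})$ and the contributions of high basis modes are uniformly small — upgrades this to almost sure convergence $\Gamma_{A_n}\to\Gamma_A$ in $\Hh^{-1}$.

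I expect the main obstacle to be the very first input above: that enlarging a $\Gamma$-measurable local set $A_m$ to a $\Gamma$-measurable local set $A_n\supseteq A_m$ amounts to exploring a local set of $\Gamma^{A_m}$. This is \emph{not} Lemma~\ref{BPLS}(2), which assumes a conditional independence that is plainly absent here (one set contains the other), so it must be argued separately; it is the only place where measurability with respect to $\Gamma$ — rather than just a good coupling — is essential, and all the martingale/orthogonality structure above rests on it. The remaining steps — the $L^2(\Hh^{-1})$ bounds, the passage from a fixed test function to the $\Hh^{-1}$ topology, and checking that the closedness argument behind Lemma~\ref{BPLS}(3) really does not use the connected-components hypothesis for the \emph{identification} of the limit (as opposed to tightness) — are routine but should be written out with some care.
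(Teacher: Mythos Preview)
Your argument is essentially correct and is considerably more explicit than the paper's, which for parts (3) and (4) simply cites \cite{SchSh2,ALS1} and notes that the Beurling estimate gives $G_{D\setminus A_n}\to G_{D\setminus A}$. Your route makes the mechanism visible: in the decreasing case $D\setminus A_n\uparrow D\setminus A$, so the convergence of Green's functions is pure monotonicity (no Beurling needed), and the orthogonality identity $\E[(\Gamma_{A_n}-\Gamma_{A_m},f)^2]=W_m-W_n$ immediately reduces a.s.\ convergence to a maximal inequality. The paper's citation-based proof is shorter to state but hides exactly this structure.

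One correction to your self-assessment: the ``main obstacle'' you flag is not one. The paper adopts as a \emph{standing assumption} that all local sets under consideration are measurable functions of $\Gamma$; consequently $(A_m,\Gamma_{A_m})$ and $(A_n,\Gamma_{A_n})$ are both $\sigma(\Gamma)$-measurable and hence trivially conditionally independent given $\Gamma$. So Lemma~\ref{BPLS}(2) applies directly and gives that $A_n\setminus A_m$ is a local set of $\Gamma^{A_m}$ with $(\Gamma^{A_m})_{A_n\setminus A_m}=\Gamma_{A_n}-\Gamma_{A_m}$ --- no separate argument is required. What \emph{does} deserve an extra sentence is the filtration in your Doob step: for $(\Gamma_{A_n}-\Gamma_{A_N})_{n\le N}$ to be a bona fide martingale you must check that conditioning additionally on the intermediate pairs $(A_{n'},\Gamma_{A_{n'}})$, $n<n'\le N$, does not destroy the mean-zero property of the next increment. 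This holds because, conditionally on $(A_n,\Gamma_{A_n})$, the field $\Gamma^{A_n}$ is independent of $\mathcal F_{A_n}$ and the smaller sets $A_{n'}\subseteq A_n$ contribute no information beyond $\mathcal F_{A_n}$ once one uses part~(2) again; but it is worth writing out rather than asserting.
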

\begin{proof} These properties can be found in \cite {SchSh2,WWln2,ALS1}. More precisely, the first claim comes from Lemma 3.9 of \cite{SchSh2}. 
The second result follows from Lemma 3.10 of \cite{SchSh2} and the proof of Lemma 3.11 of \cite{SchSh2}. The last two results follow because under the conditions on 
	$A_n$, Beurling estimate ensures that $G_{D\backslash A_n}\to G_{D\backslash A}$ as $n\to \infty$. A slightly more complicated version of this result is proved in Lemma 5.7 of \cite{ALS1}, where the local sets $A_n$ are coupled with metric graph GFFs on finer and finer meshes. Similarly, the first part of (3) can also be obtained via a slight adaptation of Lemma 4.6 of \cite{SchSh2}, which considers a sequence of discrete GFFs on finer and finer meshes. 
\end{proof}

Often we deal with a sequence of local sets that result form an exploration process, motivating the following definition:.
\begin{defn}[Local set process]
	We say that a coupling $(\Gamma,(\eta(t))_{t\geq 0})$ is a local set process if $\Gamma$ is a GFF in $D$, $\eta(0)\subseteq \partial D$, and $\eta_t$ is an increasing continuous family of local sets such that for all stopping time $\tau$ of the filtration $\F_t:=\sigma(\eta(s):s\leq t)$, 
	$(\Gamma,\eta(\tau))$ is a local set.
\end{defn}

Local set processes can be naturally parametrised from the viewpoint of any interior point $z$: the expected height $h_{\eta_t}(z)$ then becomes a Brownian motion. More precisely, if we define $\CR(z;D)$ as the conformal radius of $D$ from $z$ we have that:

\begin{prop}[Proposition 6.5 of \cite{MS1}]\label{BMinterior}
	For any $z \in D$, if $(\eta(t))_{t\geq 0}$ is parametrised such that 
	$(\log(\CR(z;D))-\log(\CR(z;D\backslash \eta([0,t])))(z,z)=t$, then $(h_{\eta([0,t])}(z))_{t\geq 0}$ has the law of a Brownian motion. 
\end{prop}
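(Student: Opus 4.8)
The plan is to reduce the statement to the defining martingale property of local sets, combined with the conformal invariance of the GFF and an application of the Dubins--Schwarz theorem. First I would fix the interior point $z \in D$ and, by conformal invariance, map $D$ to the unit disk $\D$ with $z \mapsto 0$, so that $\CR(z;D)$ becomes $1$ and the time parameter is $-\log \CR(0; \D \backslash \eta([0,t]))$. The key input is that for a local set process $(\Gamma, \eta(t))$, and for any fixed stopping time $\tau$ of $\F_t$, the pair $(\Gamma, \eta(\tau))$ is a local set, so $h_{\eta(\tau)}$ is a well-defined harmonic function on $D \backslash \eta([0,\tau])$ and $\E[(\Gamma,f) \mid \F_\tau] = (h_{\eta(\tau)}, f)$ for test functions $f$ supported in the component of the complement containing $z$. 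This makes $t \mapsto h_{\eta([0,t])}(z)$, evaluated against an approximate delta at $z$ — or more cleanly via the harmonicity, evaluated at $z$ itself — a continuous $\F_t$-martingale.

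Next I would identify its quadratic variation. The natural approach is to compute $\E[h_{\eta([0,t])}(z)^2]$ or, better, the conditional increments: for $s < t$, $\E[(h_{\eta([0,t])}(z) - h_{\eta([0,s])}(z))^2 \mid \F_s]$. Using the Markov decomposition $\Gamma = h_{\eta([0,s])} + \Gamma^{\eta([0,s])}$ and then further decomposing $\Gamma^{\eta([0,s])}$ according to the local set $\eta([0,t]) \backslash \eta([0,s])$ inside $D \backslash \eta([0,s])$ (using part (2) of Lemma \ref{BPLS}), one sees that the increment of the harmonic function at $z$ is governed by the Green's function of $D \backslash \eta([0,s])$ at $z$. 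The standard computation — this is exactly the content behind the conformal-radius time change — gives that the quadratic variation of $h_{\eta([0,t])}(z)$ accumulated between times $s$ and $t$ equals $\log \CR(z; D \backslash \eta([0,s])) - \log \CR(z; D \backslash \eta([0,t]))$, i.e. precisely $t - s$ under the stated parametrisation. The cleanest way to see this is to test against a function and use that the variance of $(\Gamma^{\eta([0,s])}, f)$ minus $(\Gamma^{\eta([0,t])},f)$ is expressed through $G_{D\backslash\eta([0,s])} - G_{D\backslash\eta([0,t])}$, and that evaluating the harmonic function at $z$ corresponds (in the limit of test functions concentrating at $z$) to the $\frac{1}{2\pi}\log \CR$ term appearing in the short-distance expansion of the Green's function; the factor $\lambda^2 = \pi/8$ versus $\frac{1}{2\pi}$ must be tracked carefully but works out so that the variance rate is exactly $1$ in this normalisation. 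Then by the Dubins--Schwarz theorem a continuous martingale with quadratic variation equal to $t$ is a standard Brownian motion, which is the claim. (Continuity in $t$ is part of the hypothesis that $\eta_t$ is a continuous increasing family of local sets, and one checks the time change $t \mapsto -\log\CR(z; D\backslash\eta([0,t]))$ is continuous and strictly increasing, so the reparametrisation is legitimate.)

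The main obstacle I anticipate is the rigorous justification of ``evaluating the harmonic function at the point $z$'' as a genuine martingale with the claimed quadratic variation, since $h_{\eta([0,t])}$ is only defined as a harmonic function on a random domain and one must be careful that $z$ stays in the relevant component (which it does, since $z$ is an interior point and $\eta(0) \subseteq \partial D$, at least up to the time $z$ is swallowed — and the parametrisation is designed precisely so that the conformal radius goes to $0$, i.e. time goes to $\infty$, exactly as $z$ is absorbed). Making the passage from ``tested against $f$'' statements to the pointwise value precise requires either an explicit approximation argument or invoking the harmonicity to represent $h_{\eta([0,t])}(z)$ as an average, and then controlling the quadratic variation of that average. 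This is exactly the computation carried out in Proposition 6.5 of \cite{MS1}, so in the write-up I would state the reduction carefully and cite that proposition for the computation, noting only the normalisation bookkeeping ($\lambda = \sqrt{\pi/8}$, $G_D \sim \frac{1}{2\pi}\log(1/|x-y|)$) needed to match conventions.
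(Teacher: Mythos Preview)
Your proposal is essentially correct, but note that the paper does not actually prove this proposition: it is simply stated with attribution to Proposition~6.5 of \cite{MS1} and no argument is given in the text. So there is nothing in the paper to compare against beyond the citation itself.

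Your sketch is the standard proof and is, as you yourself note at the end, the one carried out in \cite{MS1}: the local set property gives that $t\mapsto h_{\eta([0,t])}(z)$ is a continuous martingale, the conditional variance of the increment is governed by $G_{D\backslash\eta([0,s])}-G_{D\backslash\eta([0,t])}$ near the diagonal, which yields the $\log\CR$ time change, and Dubins--Schwarz finishes. The one place to be careful is exactly the point you flag: the normalisation. With the paper's convention $G_D(x,y)\sim\frac{1}{2\pi}\log(1/|x-y|)$, the variance of $h_{\eta([0,t])}(z)$ is $\frac{1}{2\pi}\big(\log\CR(z;D)-\log\CR(z;D\backslash\eta([0,t]))\big)$, not $t$ itself; so either the Brownian motion in the statement has variance $t/(2\pi)$ at time $t$, or the parametrisation should absorb the $2\pi$. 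This is a bookkeeping issue in the paper's statement rather than in your argument, but since you mention tracking the constants, be aware that the ``variance rate exactly $1$'' claim depends on which convention is in force.
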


We mainly work with local sets that do not charge the GFF, called thin local set (see \cite{WWln2,Se}). More precisely, they are local sets $A$ such that  for any smooth test function $f \in \mathcal{C}^\infty_0$, the random variable $(\Gamma,f)$ is almost surely equal to  $(\int_{D \backslash A }  h_A(x) f(x) dx) + (\Gamma^A,f)$. This definition assumes that $h_A$ belongs to $\mathcal L^1(D\backslash A)$, which is always the case in our paper. For the general definition see \cite{Se}. In this setting it is not hard to derive a sufficient condition for a local set to be thin: if the Minkowski dimension of $A$ is almost surely strictly smaller than $d < 2$, then $A$ is thin (e.g. see proof of Proposition 4.3 in \cite{Se}).

Thin local sets are not that easy to work with: for example, we still cannot prove the intuitively clear statement that any thin sets has a.s. zero Lebesgue measure. Yet, we can still say that they are small in a certain way:

\begin{lemma}\label{thin empty}
	Let $\Gamma$ be a GFF in $D$. If $A$ is a thin local set of a GFF $\Gamma$, then, a.s. $A$ has empty interior.
\end{lemma}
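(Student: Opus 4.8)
The plan is to argue by contradiction: suppose that with positive probability $A$ contains an open ball $B$. The key idea is that on $B$ the GFF $\Gamma$ would coincide (as a distribution, tested against functions supported in $B$) with the harmonic function $h_A$, because a thin local set does not charge $\Gamma$: for any $f\in\mathcal C_0^\infty(B)$ we have $(\Gamma,f)=(\Gamma^A,f)+\int_{D\setminus A}h_A(x)f(x)\,dx$, and since $f$ is supported in $B\subseteq A$, both the integral term and $(\Gamma^A,f)$ vanish (the integral because $B\cap (D\setminus A)=\emptyset$, and $(\Gamma^A,f)$ because conditionally on $A$ the field $\Gamma^A$ is a GFF in $D\setminus A$, hence supported — in the distributional sense — on $D\setminus A$). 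So $(\Gamma,f)=0$ for all $f\in\mathcal C_0^\infty(B)$, meaning $\Gamma$ restricted to $B$ is the zero distribution.

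First I would make precise the claim that $(\Gamma^A,f)=0$ a.s.\ whenever $\mathrm{supp}(f)\cap(D\setminus A)=\emptyset$. This follows from the definition of the local set coupling conditionally on $(A,\Gamma_A)$: $\Gamma^A$ is a GFF in $D\setminus A$, which by definition is the Gaussian process with covariance given by $G_{D\setminus A}$, and $G_{D\setminus A}(x,y)=0$ as soon as $x\notin D\setminus A$; thus $\mathbb E[(\Gamma^A,f)^2\mid A,\Gamma_A]=\iint f(x)G_{D\setminus A}(x,y)f(y)\,dx\,dy=0$. Taking expectations removes the conditioning. Similarly $\int_{D\setminus A}h_Af\,dx=0$ since the integrand vanishes on the support of $f$ (here using $h_A\in\mathcal L^1(D\setminus A)$, which holds throughout the paper). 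Hence $(\Gamma,f)=0$ almost surely, for each fixed $f$ supported in $B$; and since a GFF has a $\sigma$-compact separable family of test functions, one gets that almost surely $(\Gamma,f)=0$ simultaneously for all $f\in\mathcal C_0^\infty(B)$, i.e. $\Gamma|_B\equiv 0$.

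To turn this into a contradiction, I would condition on the event $E=\{A$ contains some open ball$\}$, which I am assuming has positive probability. On $E$ one can select (measurably) a random open ball $B=B(q,\rho)$ with rational center $q$ and rational radius $\rho$ contained in $A$; then the above shows $\Gamma|_B=0$. But for any \emph{fixed} ball $B_0=B(q_0,\rho_0)\subseteq D$, the event $\{\Gamma|_{B_0}=0\}$ has probability zero — indeed $(\Gamma,f)$ is a nondegenerate centered Gaussian for any nonzero $f\in\mathcal C_0^\infty(B_0)$, as its variance $\iint fG_Df>0$. Taking a countable union over rational $q_0,\rho_0$, the event that $\Gamma$ vanishes on \emph{some} rational ball still has probability zero, contradicting $\P(E)>0$. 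Therefore $A$ has a.s.\ empty interior.

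I expect the only delicate point to be the measurability/selection step and the passage from "for each fixed $f$, $(\Gamma,f)=0$ a.s." to "a.s., $(\Gamma,f)=0$ for all $f$ in a ball", but both are routine: the first is handled by choosing a countable dense family of test functions in $\mathcal C_0^\infty(B)$ (in the relevant $\mathcal H^{-1}$-dual sense) and using continuity of $f\mapsto(\Gamma,f)$; the second by restricting to balls with rational data. Everything else is a direct unwinding of the definition of thin local set together with the fact that $G_{D\setminus A}$ is supported off $A$.
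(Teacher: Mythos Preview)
Your proposal is correct and follows essentially the same approach as the paper: both argue by contradiction, observe that on the event $\{B_0\subseteq A\}$ for some fixed ball $B_0$ the thinness identity forces $(\Gamma,f)=0$ for $f$ supported in $B_0$, and note this contradicts the nondegeneracy of the Gaussian $(\Gamma,f)$. The paper's version is simply terser---it directly picks a deterministic ball $B_0$ with $\P(B_0\subseteq A)>0$ (which exists by countability of rational balls) and a single fixed nonzero $f$ supported there; your detour through ``a.s.\ simultaneously for all $f\in\mathcal C_0^\infty(B)$'' is unnecessary, since one fixed test function already yields the contradiction.
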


\begin{proof}
	Assume for contradiction that $A$ has non-empty interior. Then there exists an $x\in D$ and $r>0$ such that with positive probability $B(x,D)\subseteq A$. Define $f$ a non-zero function with compact support in $B(x,D)$. Then, on the event where $B(x,D)\subseteq A$ \[(\Gamma_A,f)=(\Gamma,f)\neq 0=\int h_A(x)f(x)dx,\]
	giving a contradiction.
\end{proof}

Another natural class of local sets is that of bounded type-local set (BTLS), introduced in \cite{ASW}. A $K-$BTLS is a thin local set such that almost surely $|h_A|\leq K$ for some fixed $K > 0$. Intuitively $K-$BTLS correspond to stopping times $\tau$ of the Brownian motion $B_t$, which are small in the sense that, say, $\E [\tau] < \infty$, and satisfy $|B_\tau| \leq K$ for some $K > 0$. It is easy to see that in the case of the Brownian motion, any $K-$BTLS satisfies $\tau \leq \sigma_{-K,K}$ where $\sigma_{-K,K}$ is the first exit time from the interval $[-K,K]$. An analogue result was proved in \cite{ASW}: any $K-$BTLS of the 2D GFF is contained in a two valued set $\Aa_{-K',K'}$ with $K' \geq K + 2\lambda$ (we believe it should be true with $K' = K$).

\subsection{Level lines of the continuum GFF with piecewise boundary conditions}\label{LLs}
One of the simplest families of BTLS are the generalised level lines, first described in \cite{SchSh2}. 
We recall here some of their properties, see \cite{WaWu,ASW} for a more thorough treatment of the subject. To simplify our statements take $D:=\H$. Furthermore, let $u$ be a harmonic function in $D$. We say that $(\eta(t))_{t\geq 0}$, a curve parametrised by half-plane capacity, is the generalised level line for the GFF $\Gamma + u$ in $D$ up to a stopping time $\tau$ if for all $t \geq 0$:

\begin{description}
	\item[$(*)$]The set $\eta([0, t \wedge \tau])$ is a BTLS of the GFF $\Gamma$, with harmonic function $h_t:=h_{\eta([0,t\wedge \tau])}$ satisfying the following properties: $h_t + u$ is a harmonic function in $D \backslash \eta([0,\min (t, \tau)])$ with boundary values $-\lambda$  on the left-hand side of $\eta$, $+ \lambda$ on the right side of $\eta$, and with the same boundary values as $u$ on $\partial D$.
\end{description}

The first example of level lines comes from \cite{SchSh2}: Let $u_0$ be the unique bounded harmonic function in $\H$ with boundary condition $-\lambda$ in $\R-$ and $\lambda$ in $\R^+$. Then it is shown in \cite{SchSh2} that there exists a unique $\eta$ satisfying $(*)$ for $\tau= \infty$, and its law is that of an SLE$_4$. Several subsequent papers \cite{SchSh2,MS1,WaWu,PW} have studied more general boundary data in simply-connected case and also level lines in a non-simply connected setting \cite{ASW}. 

In this paper, we are just going to work with piecewise constant boundary conditions\footnote{Here, and elsewhere this means that the boundary conditions are given by a piecewise constant function that changes only finitely many times}. A careful treatment of level lines in this regime is done in \cite{WaWu}. We are now going to state Theorem 1.1.1 of \cite{WaWu}. Let $u$ be a bounded harmonic function with piecewise constant boundary data  such that $u(0^-) < \lambda$ and $u(0^+)>-\lambda$. 

\begin{lemma}[Existence of generalised level line targeted to $\infty$]\label{lemext}
	There exists a unique law on random simple curves $(\eta(t), t\geq 0)$ coupled with the GFF such that $(*)$ holds for the function $u$ and possibly infinite stopping time $\tau$ that is defined as the first time when $\eta$ hits a point $x\in \R$ such that $x \geq 0$ and $u(x^+) \leq -\lambda$ or $x \leq 0$ and $u(x^-) \geq \lambda$. We call $\eta$ the generalised level line for the GFF $\Gamma + u$. 
\end{lemma}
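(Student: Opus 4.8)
The plan is to establish existence and uniqueness separately, following the strategy already used for the SLE$_4$ case of \cite{SchSh2} but adapting it to the general piecewise-constant boundary data $u$. The key point is that the defining property $(*)$ describes a curve whose driving function solves a Loewner equation coupled to the GFF, and the condition $u(0^-)<\lambda$, $u(0^+)>-\lambda$ guarantees the boundary values seen locally near the tip are the ``right'' ones ($-\lambda$ on the left, $+\lambda$ on the right, so the local picture looks like the original SLE$_4$ level line). First I would reduce, by the conformal invariance and the explicit computation of harmonic measures, to writing the Loewner evolution: if $g_t$ is the Loewner map of $\eta([0,t])$ with driving function $W_t$, then the conditional mean $h_t+u$ pulled back by $g_t^{-1}$ must be the bounded harmonic function in $\H$ with prescribed boundary data, and matching this with the martingale property of $(\Gamma, f)$ for test functions $f$ forces $W_t$ to be a semimartingale whose drift is determined by the boundary data and whose quadratic variation is that of $2 B_t$ (speed $\kappa=4$).

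**Existence.**
To construct $\eta$, I would run the Loewner chain with the SDE $dW_t = 2\,dB_t + (\text{drift}_t)\,dt$, where the drift term comes from differentiating the harmonic function with the appropriate singularity structure at the tip and at the boundary discontinuity points of $u$; since $u$ has only finitely many jumps, the drift is a locally bounded, explicit function of $W_t$ and the positions of the images $g_t(x_i)$ of the jump points, so the SDE has a unique strong solution up to the stopping time $\tau$ at which $\eta$ swallows a boundary point where the boundary condition would force a value $\le -\lambda$ on the right or $\ge\lambda$ on the left (the curve cannot be continued past such a point while keeping $(*)$). Then I would verify that the resulting curve, together with $h_t=$ (harmonic extension with the prescribed $\pm\lambda$ / $u$ boundary values) $-u$, indeed satisfies $(*)$: this is the standard computation showing $(\Gamma,f) - (h_t,f)$ is a martingale, hence $\eta([0,t\wedge\tau])$ is a local set, and boundedness of $h_t$ (using that $u$ is bounded and the $\pm\lambda$ boundary data is bounded) gives that it is a BTLS. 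One also needs continuity of the curve up to and including $\tau$, which follows from the SLE$_4$-type estimates of \cite{MS1,WaWu} since locally the law is absolutely continuous with respect to SLE$_4$.

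**Uniqueness.**
For uniqueness, suppose $(\eta(t))$ is any curve satisfying $(*)$. The local set property together with $(*)$ pins down the conditional law of $\Gamma$ given $\eta([0,t])$, hence pins down $h_t$ as a deterministic functional of the curve. Applying the local-set martingale characterization (Lemma \ref{BPLS}(1), the uniqueness of the harmonic function in a local set coupling) and testing against functions $f$ supported near the tip, one recovers that the driving function $W_t$ must be a continuous semimartingale with the same drift and the same quadratic variation as in the existence construction — this is exactly the argument of \cite{SchSh2} and its extension in \cite{WaWu}, and by strong uniqueness of the SDE the law of $\eta$ is determined. I would also invoke Lemma \ref{BPLS}(1) to see that $\eta$ determines $h_t=\Gamma_{\eta([0,t])}$ and hence the coupling is canonical.

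**Main obstacle.**
The delicate point is the behaviour at $\tau$ and at the boundary jump points of $u$: one must check that the drift in the Loewner SDE does not blow up too fast as $g_t$ maps $\eta$'s tip toward a discontinuity of $u$ or toward a ``bad'' point (where the continuation would violate the $\pm\lambda$ constraint), so that the SDE is well-posed right up to $\tau$ and the curve remains continuous there. This is precisely the content of the careful analysis in \cite{WaWu} (their Theorem 1.1.1), so in the write-up I would cite that result for the technical continuity and non-degeneracy statements and focus on explaining why the hypothesis $u(0^-)<\lambda$, $u(0^+)>-\lambda$ is exactly what is needed to start the curve, and why $\tau$ is the maximal time of definition. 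I expect the main work — and the reason the statement is quoted rather than reproved — to be bookkeeping of the finitely many jump contributions to the drift and the corresponding Bessel-like analysis near each such point.
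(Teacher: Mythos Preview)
The paper does not prove this lemma at all: it is stated as a direct restatement of Theorem 1.1.1 of \cite{WaWu}, with no argument given beyond the citation. Your sketch is a reasonable outline of how that theorem is actually proved in \cite{SchSh2,WaWu} --- construct the driving function as the solution of an SLE$_4(\rho_1,\dots,\rho_k)$-type SDE determined by the jump data of $u$, verify the local-set martingale property to get $(*)$, and deduce uniqueness from strong uniqueness of the SDE plus the uniqueness of $\Gamma_A$ in a local set coupling --- so there is no genuine discrepancy in method, only in scope: the paper imports the result wholesale, while you reproduce its skeleton. Your own closing paragraph already identifies correctly that the substantive analytic work (continuity up to $\tau$, Bessel-type behaviour near force points) lives in \cite{WaWu} and would be cited rather than reproved; that is exactly what the paper does, just one step earlier.
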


For convenience we also use the notion of a $(-a, -a+2\lambda)$-level line of $\Gamma+u$: it is a generalised level line of $\Gamma +a - \lambda+u$ and has boundary conditions $-a, -a +2\lambda$ with respect to the field $\Gamma+u$. Moreover, it is known that when $u=0$ this level line has the law of a SLE$_4(-a/\lambda, a/\lambda-2)$ process, see Theorem 1.1.1 of \cite{WaWu}.

Notice that as the level line is parametrised using half-plane capacity, it will accumulate at $\infty$ if not stopped earlier. 

Finally, let us recall Lemma in Section 3 of \cite{ALS1}, that sums up one of the key arguments of \cite{ASW}:
	\begin{lemma}\label{donotenter}
		Let $\eta$ be a generalized level line of a GFF $\Gamma + u$ in $D$ as above and $A$ a BTLS of $\Gamma$ conditionally independent of $\eta$. Take $z\in D$ and define $O(z)$ the connected component of $D\backslash A$ containing $z$. On the event where on any connected component of $\partial O(z)$ the boundary values of $(h_A+u)\mid _{O(z)}$ are either everywhere $\geq \lambda$ or everywhere $\leq -\lambda$ , we have that a.s. $\eta([0,\infty])\cap O(z)=\emptyset$.
	\end{lemma}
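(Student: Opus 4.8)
The plan is to reduce the statement to a "do-not-cross" property of generalized level lines: a level line cannot cross (or touch from the appropriate side) an interface on which the harmonic extension of the total field already equals $\pm\lambda$ with the wrong sign. First I would set up the right conditioning. Since $A$ and $\eta$ are conditionally independent BTLS of the same GFF $\Gamma$, I would work conditionally on $A$ and $\Gamma_A$; by the local set property $\Gamma^A = \Gamma - \Gamma_A$ is then a GFF in $D\setminus A$, and in particular a GFF in the component $O(z)$. The curve $\eta$ is still a generalized level line for $\Gamma + u$, and on $O(z)$ one can rewrite $\Gamma + u = \Gamma^A + (h_A + u)$, so that — once $\eta$ enters $O(z)$ — it behaves as a generalized level line of the GFF $\Gamma^A$ in $O(z)$ with the boundary data $h_A+u$ (plus the $\mp\lambda$ jump along $\eta$ itself). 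This is where Lemma \ref{BPLS}(2) (that $\eta([0,\tau])\setminus A$ is a local set of $\Gamma^A$) and the strong Markov property of the level-line process get used.

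Next I would exploit the hypothesis on the boundary values. The key point is that $\eta$ is a level line, so along $\eta$ the values of $h_{\eta([0,t])}+u$ are $-\lambda$ on the left and $+\lambda$ on the right. Consider the first time $\sigma$ that $\eta$ would enter $O(z)$, i.e. hits $\partial O(z)$. On the event in the statement, each connected component of $\partial O(z)$ carries boundary data (for $h_A+u$, computed in $O(z)$) that is everywhere $\geq \lambda$ or everywhere $\leq -\lambda$. The dichotomy I want is: if $\eta$ touched such a boundary arc, then on one side of $\eta$ near the touching point the boundary condition of the total harmonic function would be both $\pm\lambda$ (coming from $\eta$) and simultaneously $\geq\lambda$ or $\leq -\lambda$ (coming from $\partial O(z)$), which is compatible only in a degenerate way; and the standard theory of level lines (the monotonicity/non-crossing results of \cite{SchSh2, WaWu, MS1}) forbids a level line from crossing, or even from reaching, a boundary arc whose data is on the "wrong side" of $\pm\lambda$. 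Concretely: a level line targeted at a point stops upon hitting boundary with value $\leq -\lambda$ on its right or $\geq \lambda$ on its left; and by absolute continuity / Loewner-chain comparison it a.s. never hits boundary arcs that are strictly on the far side. I would phrase this as: conditionally on $A, \Gamma_A$, inside $O(z)$ the curve $\eta$ is (absolutely continuous with respect to) a level line of $\Gamma^A$ with boundary data $h_A + u$, and since that data is $\geq \lambda$ (resp. $\leq -\lambda$) on every boundary component, the level line cannot enter $O(z)$ at all — any attempt to do so forces it against a "forbidden" arc. Hence $\eta([0,\infty])\cap O(z)=\emptyset$ almost surely on the event considered.

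The main obstacle is the topological/boundary bookkeeping: $O(z)$ is an arbitrary simply connected component of $D\setminus A$ (so $\partial O(z)$ may be a complicated Jordan-type set, and $A$ is only thin, with empty interior by Lemma \ref{thin empty}), and $\eta$ could a priori touch $\partial O(z)$ along a nontrivial set without "crossing" in a naive sense. Handling this cleanly requires: (i) a conformal map from $\H$ (or the component of $D\setminus\eta$ relevant at time $\sigma$) onto $O(z)$ and a careful transfer of the piecewise-harmonic boundary data, making sure the hypothesis "everywhere $\geq\lambda$ or everywhere $\leq-\lambda$ on each component of $\partial O(z)$" survives and matches the hypotheses of Lemma \ref{lemext}; and (ii) a limiting argument to upgrade "cannot cross" to "cannot touch", e.g. by stopping $\eta$ just before it would hit $\partial O(z)$, comparing with a level line in a slightly smaller domain, and using the continuity/tightness of level lines together with Lemma \ref{BPLS}(3)–(4). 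I expect step (ii) — ruling out pathological boundary touching — to be the technically delicate part, and I would lean on the detailed level-line estimates of \cite{WaWu} (their Theorem 1.1.1 and the accompanying monotonicity statements) and on the Beurling-type harmonic-measure bounds already invoked in the proof of Lemma \ref{BPLS} to close it.
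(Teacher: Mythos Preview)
The paper does not give its own proof of this lemma: it is stated as a recall from Section~3 of \cite{ALS1}, which in turn packages one of the key arguments of \cite{ASW}. So there is no in-paper proof to compare against.

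Your outline is along the lines of the argument in those references: condition on $(A,\Gamma_A)$, use Lemma~\ref{BPLS}(2) so that $\eta\setminus A$ is a local set of $\Gamma^A$, rewrite $\Gamma+u=\Gamma^A+(h_A+u)$ on $O(z)$, and then reduce to the interaction rules for generalized level lines with boundary data that is $\ge\lambda$ or $\le-\lambda$ on each boundary arc (Theorem~1.1.1 and the monotonicity statements of \cite{WaWu}, together with the hitting rules in Lemma~\ref{lemext}). You have also correctly located the genuine technical issue, namely upgrading ``cannot cross'' to ``cannot enter at all'' across a possibly rough $\partial O(z)$.

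One small caution on phrasing: saying that ``once $\eta$ enters $O(z)$ it behaves as a generalized level line of $\Gamma^A$ in $O(z)$'' is a bit loose, since $\eta$ is a priori defined and parametrized in $D$, not in $O(z)$, and the entry point need not be a nice boundary point from which one can restart a level line. The cleaner route (and the one taken in \cite{ASW}) is to work directly with the union local set $A\cup\eta([0,t])$ and its harmonic function in the component containing $z$: if $\eta$ entered $O(z)$, the boundary values of $h_{A\cup\eta}+u$ near the entry arc would have to be simultaneously $\pm\lambda$ (from the $\eta$ side) and $\ge\lambda$ or $\le-\lambda$ (from the $A$ side), and the level-line hitting rules then yield the contradiction. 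This avoids having to manufacture a new level line inside $O(z)$ and sidesteps part of the conformal-mapping bookkeeping you flag in~(i).
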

	
\section{Basic properties of two-valued local sets}\label{BPTVE}

The primary objective of this section is to introduce and state elemental properties of two-valued local sets (TVS). TVS were defined in \cite{ASW} as the equivalent of the exit times of an interval by a one-dimensional Brownian motion. After that, they have been used as a tool to construct the Liouville Quantum gravity \cite{APS}, to couple the Dirichlet and the free boundary GFF \cite{QW17}, and to study first passage sets \cite{ALS1}.

This section is organised as follows: first, we define TVS and recall some of the properties that were proved in \cite{ASW, ALS1}. Then, we recall their construction. Finally, we show some new properties about the loops of TVS.

\subsection{Definition and basic properties of TVS}
Fix $a,b>0$, and $\Gamma$ a GFF in a simple connected domain $D$. We say that $\Aa_{-a,b}$  is a TVS of levels $-a$ and $b$ if it is a thin local set of $\Gamma$ such that \hypertarget{tvs}: 
\begin{itemize}
	\item[(\twonotes)] For all $z\in D\backslash \Aa_{-a,b}$, a.s. $h_{\Aa_{-a,b}} (z) \in \{-a,b\}$.
\end{itemize} 

Let us recall the main properties of TVS.
\begin {prop}[Proposition 2 of \cite{ASW}]
\label {cledesc2}
Let us consider $-a < 0 < b$. 
\begin {enumerate}
\item
When $a+b  < 2 \lambda$, there are no thin local sets of $\Gamma$ satisfying \hyperlink{tvs}{(\twonotes)}.
\item
When $a+b \ge 2 \lambda$, it is possible to construct  $\Aa_{-a,b}$ coupled with a GFF $\Gamma$. Moreover, the sets $\Aa_{-a,b}$ are
\begin{itemize} 
	\item Unique in the sense that if $A'$ is another thin local set of $\Gamma$ satisfying \hyperlink{tvs}{(\twonotes)}, then $A' = \Aa_{-a,b}$ almost surely.  
	\item Measurable functions of the GFF $\Gamma$ that they are coupled with.
	\item Monotonic in the following sense: if $[a,b] \subset [a', b']$ and $-a < 0 < b$ with $b+a \ge 2\lambda$,  then almost surely, $\Aa_{-a,b} \subset \Aa_{-a', b'}$. 
	\item For any compact set $K\subseteq D$, $\Aa_{-a,b}\cap K$ has Minkowski dimension smaller or equal $2-2\lambda^2/(a+b)^2$. In particular they are thin local sets.
\end{itemize}
\end {enumerate}
\end {prop}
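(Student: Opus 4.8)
The plan is to follow the strategy of \cite{ASW}, splitting the statement into the non-existence part (1) and the existence/uniqueness/properties part (2). For part (1), the idea is that any thin local set $A$ satisfying \hyperlink{tvs}{(\twonotes)} would give, from the point of view of an interior point $z \in D$, a stopping time for the Brownian motion $(h_{\eta([0,t])}(z))_{t \ge 0}$ of Proposition \ref{BMinterior} at which the process takes values in $\{-a, b\}$; but since the harmonic measure weights of the two boundary values must sum to $1$, a martingale/optional stopping computation (or rather the observation that $h_A(z)$ cannot be a.s. in a two-point set straddling $0$ if the set is ``too small'' to have made a fluctuation of size $2\lambda$) forces $a + b \ge 2\lambda$. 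Concretely, one argues that if such $A$ existed with $a + b < 2\lambda$ one could iterate and construct a nested family contradicting the fact that a Brownian motion started at $0$ a.s.\ exits every interval of length $\ge 2\lambda$ only after exiting shorter ones; this is exactly the content recalled in the introduction. I would just cite \cite{ASW} for the clean version and sketch the Brownian-motion heuristic.

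For the existence in part (2), the construction is via iterated generalised level lines, using the machinery of Section \ref{LLs}. Start in $D = \H$ (by conformal invariance): run a $(-a, -a+2\lambda)$-level line from one boundary point; by Lemma \ref{lemext} it is a BTLS with $h$-values in $\{-a, -a+2\lambda\}$. In each complementary component where the harmonic function has not yet reached a value in $\{-a, b\}$ — i.e. where it equals $-a+2\lambda$ and $-a+2\lambda \ne b$, equivalently $a+b \ne 2\lambda$ — run a further level line shifting the boundary data by $2\lambda$, and iterate. Using Lemma \ref{BPLS}(2) one checks at each stage that the union of the explored level lines is a local set with the claimed conditional law, and using Proposition \ref{BMinterior} the number of iterations seen from any fixed interior point $z$ is a.s.\ finite (the driving Brownian motion must exit the growing interval). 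The increasing limit $A := \overline{\bigcup_n A_n}$ is then a local set by Lemma \ref{BPLS}(3), and one verifies that $h_A(z) \in \{-a,b\}$ a.s.\ for each fixed $z$, hence a.s.\ for all $z$ by a Fubini argument, so \hyperlink{tvs}{(\twonotes)} holds. Thinness, and more precisely the Minkowski dimension bound $2 - 2\lambda^2/(a+b)^2$, comes from the fact that each loop is locally an SLE$_4$-type curve and from controlling, via the conformal-radius/Brownian-motion parametrisation, the rate at which complementary components shrink; I would reduce this to the dimension computation already done in \cite{ASW}.

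Uniqueness is the step where the level-line construction must be complemented by the key ``do-not-enter'' argument, Lemma \ref{donotenter}. Given any other thin local set $A'$ satisfying \hyperlink{tvs}{(\twonotes)}, one takes $A'$ conditionally independent of each level line $\eta$ used in the construction of $A$, and shows by Lemma \ref{donotenter} that on each complementary component of $A'$ the boundary values of $h_{A'}$ (being in $\{-a, b\}$ with $a, b > 0$ and $a + b \ge 2\lambda$) are either everywhere $\ge \lambda$ or everywhere $\le -\lambda$ relative to the appropriate shifted field, so $\eta$ cannot enter that component; iterating gives $A \subseteq A'$, and the reverse inclusion follows by symmetry of the argument (or by a minimality characterisation). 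Measurability with respect to $\Gamma$ then follows from uniqueness together with Lemma \ref{BPLS}(1), since each level line is a measurable function of $\Gamma$ by the results recalled in Section \ref{LLs}. Monotonicity in $[a,b]$ follows once more from the construction: one can build $\Aa_{-a,b}$ first and then continue running level lines to build $\Aa_{-a',b'}$ on top of it, using Lemma \ref{BPLS}(2) to keep everything a local set. The main obstacle, as usual with TVS, is the uniqueness argument: making the inductive application of Lemma \ref{donotenter} rigorous requires care about the conditional independence at each step and about the fact that ``a.s.\ for each fixed $z$'' upgrades to ``a.s.\ for all $z$'', and this is precisely where I would lean most heavily on \cite{ASW}.
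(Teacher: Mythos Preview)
The paper does not prove this proposition at all: it is stated as Proposition~2 of \cite{ASW} and simply quoted, with the construction recalled separately in Section~\ref{Construction A}. So there is no ``paper's own proof'' to compare against beyond the citation; your proposal is in effect a sketch of the \cite{ASW} argument, and as such it is broadly on the right track.

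That said, a couple of points in your sketch are looser than the actual \cite{ASW} proof and would need tightening. First, your construction (``run a $(-a,-a+2\lambda)$-level line, then shift by $2\lambda$ and iterate'') only reaches labels in $\{-a,-a+2\lambda,-a+4\lambda,\dots\}$, so it produces $\Aa_{-a,b}$ only when $b-(-a)\in 2\lambda\N$; the general case needs the more elaborate case-splitting recalled in Section~\ref{Construction A} (first build an ALE, then iterate TVS of integer type inside, with a final adjustment step). Second, in the uniqueness argument your use of Lemma~\ref{donotenter} is correct for the inclusion $A\subseteq A'$ once you unpack ``relative to the appropriate shifted field'' (the $(-a,-a+2\lambda)$-level line cannot enter a component with boundary value $-a$ or $b\ge 2\lambda-a$), but ``the reverse inclusion follows by symmetry'' is not right: $A'$ is an arbitrary thin local set satisfying \hyperlink{tvs}{(\twonotes)}, not one built from level lines, so there is no symmetric argument. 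In \cite{ASW} one instead observes that inside each component of $D\setminus A$ the residual set $A'\setminus A$ is a thin local set whose harmonic function is one-signed, and then invokes the triviality lemma for such sets (Lemma~9 of \cite{ASW}). Your part~(1) heuristic is also not the actual mechanism; the \cite{ASW} argument is not about ``exiting shorter intervals first'' but rather that iterating a hypothetical $\Aa_{-a,b}$ with $a+b<2\lambda$ would produce BTLS whose complementary components have conformal radii bounded away from zero, contradicting thinness. Since you explicitly defer all three of these points to \cite{ASW}, the proposal is acceptable as an outline, but you should be aware that none of them is as automatic as your sketch suggests.
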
 

The prime example of such a set is CLE$_4$ coupled with the Gaussian free field as $\Aa_{-2\lambda, 2\lambda}$, see \cite{MS, ASW}. 

As $\Aa_{-a,b}$ is a measurable function of the GFF $\Gamma$. When there are several GFFs at hand, we sometimes write $\Aa_{-a,b}(\Gamma)$ to be clear which GFF the set is coupled to. Sometimes, $\Gamma=\sum_{O}\Gamma^O$, where each $O$ is a simply connected domain and $\Gamma^{O}$ is an independent GFF in $O$. In those cases we write $\Aa_{-a,b}(\Gamma,O)$ as the TVS of level $-a$ and $b$ of the GFF $\Gamma^O$. Additionally, note that from the uniqueness statement we can conclude that almost surely $\Aa_{-a,b}(\Gamma)=\Aa_{-b,a}(-\Gamma)$.

The building stone of TVSs are the smallest ones, i.e., those such that $a+b=2\lambda$. We call $\A{a}$ (sometimes) the arc loop ensemble (ALE)\footnote{Especially when listening to the Beatles in a British pub close to the Newton Institute.} associated to $-a$, as they are a union of SLE$_4$ type of arcs. ALEs are used as a building block to construct more general TVS and are central in their study. Also, they were used in \cite{QW17} to couple the Dirichlet and Neumann free fields. 


\subsection{Construction of $\Aa_{-a,b}$}
Before discussing some further properties of two value sets, let us recall their construction as given in Section 6 of \cite{ASW} in some detail:

\subsubsection{Construction of $\A{a}$} \label{Cons basic A} Consider a GFF $\Gamma$ in $\D$ and fix two boundary points, say $-i$ and $i$ and explore the $(-a, -a+2\lambda)$-level line from $-i$ to $i$. Then for each connected component of $D\backslash \eta([0,\infty])$, $h_{\eta([0,\infty])}$ is the only bounded harmonic function with boundary condition $0$ in $\partial D$ and $-a$ or $2\lambda-a$ in $\eta([0,\infty))$ depending on whether the connected components lies at the left or at the right of $\eta(\infty)$ respectively. We define $A^1=\eta([0,\infty])$.

In each connected component $O$ of $D\backslash A^1$ to the left of $\eta$, we take each $x$ and $y$ to be one of the two different intersection points between $\eta$ and $\partial D$. Suppose that $(-i,x,y)$ are in counter-clockwise order. We then explore $\eta^O(\cdot)$, the $(-a,-a+2\lambda)$-level line  from $x$ to $y$ of $\Gamma^{A^1}+h_{A^1}$ restricted to $O$. There are two types of connected components of $O\backslash \eta^O([0,\infty])$: the ones whose boundary is a subset of $\eta^O([0,\infty]) \cup A^1$ and the others whose boundary is a subset of $\eta^O([0,\infty])\cup \partial D$. Note that if $\tilde O$ is a connected component of the first type, $h_{\eta^O([0,\infty])\cap A^1}$ restricted to $\tilde O$ is equal to $-a$ (see Figure \ref{fig:firstiteration}).
\begin{figure}[h!]    
	\centering
	\includegraphics[scale=0.5]{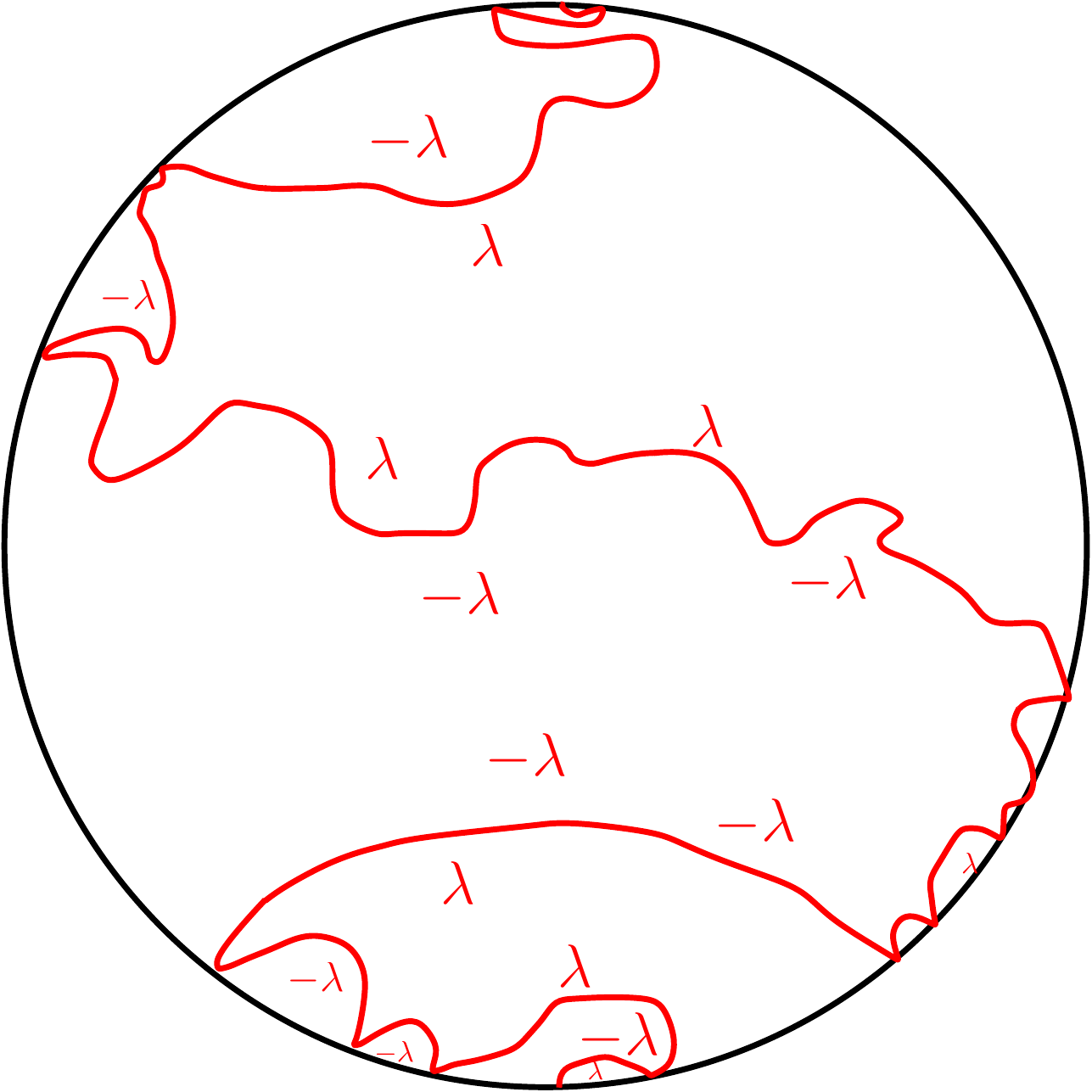}
	\includegraphics[scale=0.5]{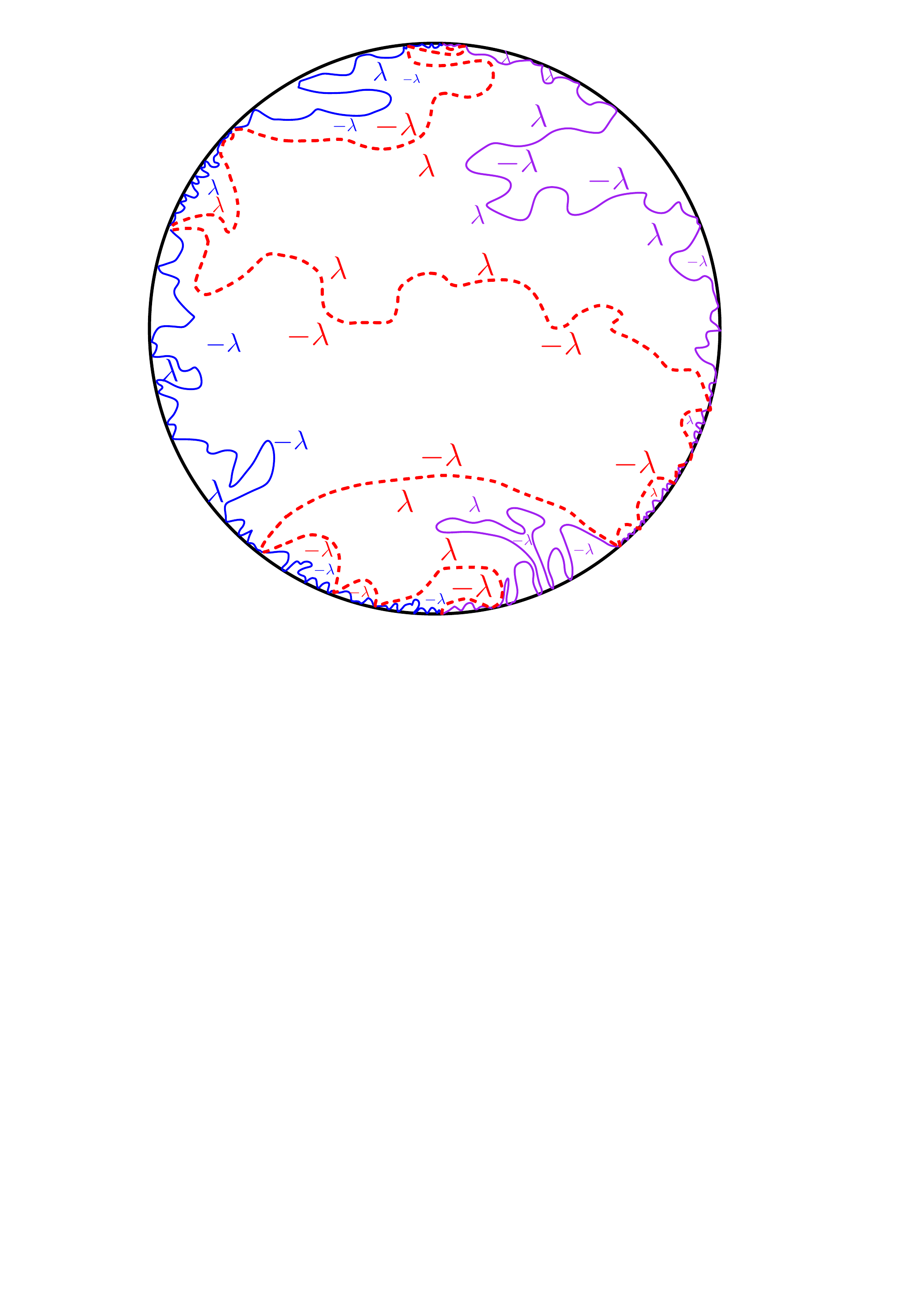}
	\caption {Construction of $\Aa_{-\lambda,\lambda}$. On the left we see a $(-\lambda,\lambda)$-level line from $-i$ to $i$, this is $A^1$. On the right we also see additional level lines going from one intersection point of $A^1$ to the other, thus the whole of $A^2$.}
	\label {fig:firstiteration}
\end{figure}

For each connected component $O$ of $D\backslash A^1$ to the right of $\eta$, we do similarly. The only difference is that we explore, $\eta^O(\cdot)$, the $(-a,-a+2\lambda)$ level line  from $y$ to $x$ of $\Gamma^{A^1}+h_{A^1}$ restricted to $O$. 

Now, define $A^2$ to be the closed union of $A^1$ with $\eta_{[0,\infty]}^O$ for each $O$ connected component of $\D\backslash A^1$.  Then in the connected components of $\D \backslash A^2$ whose boundary is a subset of $A^2$, the harmonic function $h_{A^2}$ is either constant equal to $-a$ or $2\lambda-a$ and we stop the iteration in these components. In the  other components, $h_{A^2}$ is equal to the only bounded harmonic function with boundary condition $0$ in $\partial D$ and $-a$ or $2\lambda-a$ in $A^2\cap \partial O'$ depending on whether the connected components lies to the right or the left of $A^1$ respectively. In these components, we iterate exactly as before to construct $A^n$. $\A{a}$ is the closed union of $A^n$.

From the uniqueness of $\A{a}$ (Proposition \ref{cledesc2}), we know that the arbitrary chosen starting and target points for the level lines and the order in which we sampled the level lines do not matter.

\begin{rem} \label{ALE}
	Note that the ALE, $\A{a}$, is constructed as a union of SLE$_4$-type paths. Moreover, each excursion of $\eta$ away from $\partial \D$ is on the boundary between two connected components of $\A{a}$ (one  loop labelled $-a$ to its right and one $2\lambda-a$ loop to its left). In particular, the Hausdorff dimension of an ALE is almost surely equal to $3/2$. Additionally, each connected component $O$ of $\A{a}$ is such that $\overline O\cap \partial D\neq \emptyset$. 
\end{rem}

\begin{figure}[h!]
	\includegraphics[scale=0.14]{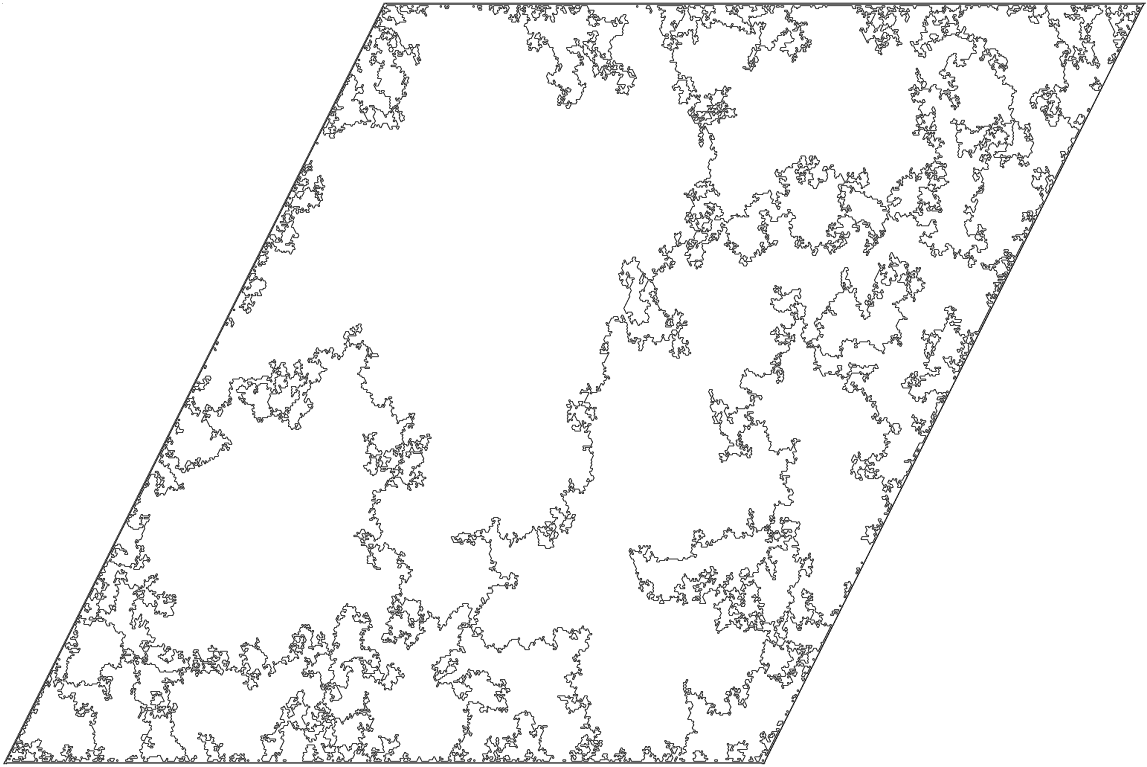}
	\caption{ALE of level $-\lambda$. Does it look similar to our drawings?. Simulation done by Brent Werness.}
\end{figure}

\subsubsection{Construction of $\Aa_{-a,b}$}\label{Construction A} We first the construct  $\Aa_{-a,b}$ for some ranges of values of $a$ and $b$, and then describe the general case. In order to simplify the notation, we use the following convention: if $A$ is a BTLS and $O$ is a connected component of $D \backslash A$, we say that $O$ is labelled $c\in \R$ if $h_A$ restricted to $O$ is equal to the constant $c$. 

\begin {itemize}
\item \underline{\textit{$a=0$ or $b=0$:}} We set $\Aa_{-a,b}=\emptyset$ and the corresponding harmonic function takes the value $0$ everywhere.

\item\underline{\textit{$a=n_1\lambda$ and $b=n_2\lambda$, where $n_1$ and $n_2$ are positive integers:}}   Define $A^1=\Aa_{-\lambda,\lambda}$, and define $A^{n+1}$ iteratively in the following way: inside each connected component of $O$ of $D\backslash A^n$ not labelled $-n_1\lambda$ or $n_2\lambda$ explore $\Aa_{-\lambda,\lambda}(\Gamma^{A^n},O)$. Define $A^{n+1}$ as the closed union between $A^n$ and the explored sets. Then, $\Aa_{-a,b}=\overline{ \bigcup A^n}$.

\item \underline{\textit{$a+b = n\lambda$ where $n \geq 3$ is an integer}}: Define $u\in [0, 2\lambda)$ such that there exists two integers $n_1\geq 0, n_2\geq 2$ with $a=u+n_1\lambda$ and $b =-u+n_2\lambda$. Let us start with $A:=\A{u}$. Inside each connected component $O$ of $D\backslash A$ labelled $-u$, resp. $-u+2\lambda$, explore $A_{-n_1 \lambda,  n_2 \lambda}(\Gamma^{A},O)$, resp. $\Aa_{-(n_1+2)\lambda,  (n_2 -2) \lambda}(\Gamma^{A},O)$. We have that $\Aa_{-a,b}$ is the closed union of $A$ with the explored sets.
\item
\underline{\textit{General case with $b+a > 2\lambda$}}: As $\Aa_{-a,b}(\Gamma)=\Aa_{-b,a}(-\Gamma)$, we may assume that $b>\lambda$.  Let $m\in \N$ such that $m \lambda-a \in (b-\lambda,b]$ and note that $m\geq 2$. Define $A^1:= A_{-a,m\lambda-a}$, and iteratively construct $A^n$ in the following way:
\begin{itemize}
	\item If $n$ is odd, then $D\backslash A^n$ is made of the closed union of loops with labels equal to either $-a$, $b$ or $m\lambda-a$. In every connected component, $O$, of $D\backslash A^n$ labelled $m\lambda-a$ we explore $A_{b+a-2m\lambda,b+a-m\lambda}(\Gamma^{A^n},O)$. Define $A^{n+1}$ the closed union of $A^{n}$ with the explored sets. Then all loops of $A^{n+1}$ have labels $-a$, $b$ or $b-m\lambda\in [-a,-a+\lambda)$.
	\item If $n$ is even, then $D\backslash A^n$ is made of the closed union of loops labelled  either $-a$, $b$ or $b-m\lambda$. In every connected component $O$ of $D\backslash A^n$ labelled $b-m\lambda$ explore $A_{-a-b+m\lambda,-a-b+2m\lambda}(\Gamma^{A^n},O)$. Define $A^{n+1}$ the closed union of $A^{n}$ with the newly explored sets. It is clear that all loops of $A^{n+1}$ have label $-a$, $b$ or $m\lambda-a$.
\end{itemize}
Then $\Aa_{-a,b}:=\overline{ \bigcup A^n}$.
\end {itemize}

Note that the uniqueness of $\Aa_{-a,b}$ implies that any arbitrary choice we made during the construction of $\Aa_{-a,b}$ does not matter. 

\begin{rem}\label{rem:thin}
Let us make an important point here: all explored sets throughout the construction are thin. One way to prove this fact, is to use Proposition 3 of \cite{ASW}: at any point in the construction we have only used level lines whose boundary data is bounded by some absolute constant $K=\max\{-a,b\}$. Each such level line is a $K-$BTLS and thus by Prop 3 of \cite{ASW} contained in a certain (only $K$-dependent) iteration of CLE$_4$. In particular, in any compact of $\D$ the Minkowski dimension of the constructed sets is bounded by $C(K)<2$. 
\end{rem}

\begin{rem}\label{rem: 2 boundary values}
Let $u$ be bounded harmonic function whose boundary value is piece-wise constant and changes only twice, i.e. is such that $u\mid_{\partial D}$ takes only two values $c$ and $d$, such that $u^{-1}(\{c\})$ is a connected. Then, by inspecting the construction above it is not hard to see that if $c,d\in [-a,b]$, one can construct a thin local set $\Aa_{-a,b}^u$ such that $h_{\Aa^u_{-a,b}}+u \in \{-a,b\}$, i.e. such that $h_{\Aa^u_{-a,b}}+u$ satisfies \hyperlink{tvs}{(\twonotes)}. Moreover, inspecting the proof of uniqueness of TVS in \cite{ASW}, one can also deduce their uniqueness. A generalisation of this statement, when $u$ takes finitely many values in the boundary is proved in \cite{ALS1}. In that article TVS are also studied in finitely-connected domains.
\end{rem}

\subsection{Some basic properties of $\Aa_{-2\lambda,2\lambda}$} One of our handles for answering questions on the geometry of two-valued set comes from our existing knowledge about the properties of $\Aa_{-2\lambda,2\lambda}$. Indeed, as discovered by Miller \& Sheffield \cite{MS} and explained in \cite{ASW}, the set $\Aa_{-2\lambda,2\lambda}$ has the law of a CLE$_4$ carpet. We will state some of these properties in a proposition below.

To do this, recall that we say that $\ell$ is a loop of $A$ if $\ell$ is the boundary of a connected component of $D\backslash A$. In our case, these boundary components are indeed Jordan curves. Define $\Lo(A)$ as the set of loops of $A$, and note that $A=\overline{ \bigcup_{\ell\in \Lo(A)} \ell}$. In this context, we say that a loop $\ell \in \Lo(\Aa_{-a,b})$ is labelled $-a$ or $b$, if $h_{\Aa_{-a,b}}$ restricted to the interior of $\ell$ is equal to $-a$ or, respectively, to $b$.

	\begin{prop}\label{BPCLE}
		Let $\Gamma$ be a GFF in $\D$ and $\Aa_{-2\lambda,2\lambda}$ be its TVS of level $(-2\lambda,2\lambda)$. Then this coupling satisfies the following properties:
		\begin{enumerate}
			\item The loops of $\Aa_{-2\lambda,2\lambda}$ are locally finite, i.e. for any $\eps>0$  there are only finitely many loops that have diameter bigger than $\eps$.
			\item Almost surely no two loops of $\Aa_{-2\lambda,2\lambda}$ intersect, nor does any loop intersect the boundary.
			\item The conditional law of the labels of the loops of $\Aa_{-2\lambda,2\lambda}$ given $\Aa_{-2\lambda,2\lambda}$ is that of i.i.d random variables taking values $\pm 2\lambda$ with equal probability.
		\end{enumerate}
	\end{prop}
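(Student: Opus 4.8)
The three properties have quite different flavors, so I would handle them separately. For (1), local finiteness, the cleanest route is to invoke the identification of $\Aa_{-2\lambda,2\lambda}$ with the CLE$_4$ carpet (due to Miller--Sheffield \cite{MS}, as explained in \cite{ASW}): CLE$_4$ is a \emph{locally finite} loop ensemble by its very construction/definition, so this transfers immediately. Alternatively, and perhaps more in the spirit of the paper, one can argue intrinsically from the construction in Section~\ref{Cons basic A}: the sets $A^n$ approximating $\Aa_{-2\lambda,2\lambda}$ are built from level lines, and one can use Proposition~\ref{BMinterior} --- the expected-height process $h_{\eta([0,t])}(z)$ is a Brownian motion in the conformal-radius time --- together with the fact that a loop of diameter $\geq\eps$ surrounding a given point $z$ forces the conformal radius of the complementary component at $z$ to drop by a definite amount, hence forces a definite amount of Brownian time to elapse before that loop closes. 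Since the label inside a loop is $\pm 2\lambda$ and Brownian motion started at $0$ takes an a.s.\ finite (but, crucially, with controllable tails) amount of time to exit $[-2\lambda,2\lambda]$, one gets that only finitely many macroscopic loops can surround $z$; a covering/union-bound argument over a countable dense set of points $z$ then upgrades this to genuine local finiteness. I would cite \cite{MS, ASW} to keep this short.

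For (2), non-intersection of the loops and of the loops with the boundary: again the CLE$_4$ identification gives it for free, since CLE$_4$ loops are a.s.\ pairwise disjoint and do not touch $\partial D$. If one wants a self-contained argument, here is the idea. Each loop of $\Aa_{-2\lambda,2\lambda}$ arises, in the iterative construction, as (part of) a generalised level line, which is a simple SLE$_4$-type curve, so a single loop is Jordan and self-avoiding; the content is that \emph{distinct} loops stay apart. Suppose two loops $\ell_1,\ell_2$ with labels $c_1,c_2\in\{-2\lambda,2\lambda\}$ touched at a point. The harmonic function $h_{\Aa_{-2\lambda,2\lambda}}$ equals $c_1$ on one side and $c_2$ on the other near that point. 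If $c_1=c_2$ the two ``loops'' would bound components with the same label meeting along an arc, which contradicts the way the construction stops exploration in a component precisely once it is entirely labelled $\pm2\lambda$ --- such a configuration cannot be produced. If $c_1\neq c_2$, say $c_1=-2\lambda$ and $c_2=2\lambda$, then across the contact point the boundary data jumps by $4\lambda$; but a level line has boundary values $\pm\lambda$ on its two sides, i.e.\ a jump of $2\lambda$ across the curve, and more generally the construction only ever produces thin sets along which the harmonic function jumps by at most $2\lambda$ locally, so a $4\lambda$ jump along a curve is impossible. One makes this rigorous using Lemma~\ref{donotenter}: in a component whose boundary carries values all $\geq\lambda$ or all $\leq-\lambda$, no further level line (hence no further loop) can enter, which prevents the offending adjacency. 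I expect this to be the most delicate of the three to write carefully from scratch, which is exactly why citing the CLE$_4$ description is the pragmatic choice.

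For (3), the conditional i.i.d.\ fair-coin law of the labels given the set: this is the statement that, conditionally on $\Aa_{-2\lambda,2\lambda}$, each loop independently gets label $+2\lambda$ or $-2\lambda$ with probability $1/2$. The mechanism is the symmetry $\Aa_{-a,b}(\Gamma)=\Aa_{-b,a}(-\Gamma)$ specialized to $a=b=2\lambda$: since $-\Gamma\overset{d}{=}\Gamma$, the pair (set, labels) has the same law as (set, negated labels), giving the marginal fair-coin statement for a single loop after one checks that the set $\Aa_{-2\lambda,2\lambda}$ itself is invariant (it is, being a measurable function of $\Gamma$ with the same law when computed from $-\Gamma$). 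For full independence across loops one exploits the recursive/nested structure: given the outermost loop and its label, the restriction of the field to each complementary component is an independent GFF (local-set Markov property, Lemma~\ref{BPLS}) \emph{with zero boundary conditions} --- here is the key point where $a=b=2\lambda$ matters, since the inside label $\pm2\lambda$ shifts the field so that the next layer of loops is again a $\Aa_{-2\lambda,2\lambda}$ of a zero-boundary GFF, to which the same $\pm$ symmetry applies independently. Formally I would set up the $A^n$ exhaustion, prove by induction on $n$ that the labels of the loops completed by stage $n$ are i.i.d.\ fair coins given $A^n$, using at the inductive step the conditional independence of the GFFs in the sub-components together with the sign symmetry in each, and then pass to the limit. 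The one subtlety to be careful about is that a loop of $\Aa_{-2\lambda,2\lambda}$ is not literally one of the level-line loops at a fixed finite stage but a limit object; one checks that each loop is ``finalized'' at some a.s.\ finite stage (its label no longer changes), which is where local finiteness from part (1) is used. I would again point to \cite{MS, ASW} where this is essentially done, and present the argument above as the outline.
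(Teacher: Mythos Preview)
Your primary approach --- citing the identification of $\Aa_{-2\lambda,2\lambda}$ with $\CLE_4$ from \cite{MS,ASW} for (1) and (2), and citing \cite{ASW} for (3) --- is exactly what the paper does (it invokes \cite{SW} for the $\CLE_4$ properties and the end of Section~4.3 of \cite{ASW} for the labels). So as a proof proposal this is correct and matches the paper.

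Two comments on the ``intrinsic'' alternatives you sketch, since they have real gaps. For (1), the argument ``only finitely many big loops surround each point $z$, then take a union over a dense set of $z$'' does not give local finiteness: a loop of diameter $\geq\eps$ can have arbitrarily small interior area and need not surround any point of a fixed countable dense set, so the union bound does not control the total number of macroscopic loops. For (3), your independence argument speaks of ``the outermost loop'' and ``the next layer of loops being again an $\Aa_{-2\lambda,2\lambda}$'', which conflates the nested $\CLE_4$ picture with the single-layer object at hand: the loops of $\Aa_{-2\lambda,2\lambda}$ are not nested and one does not iterate inside a component once it is labelled $\pm 2\lambda$. The global sign symmetry $\Gamma\mapsto -\Gamma$ gives only that each label is marginally a fair coin given the set; independence requires a resampling argument that flips the field \emph{componentwise} and checks that the resulting distribution is again a GFF with the same $\Aa_{-2\lambda,2\lambda}$ --- this is the step actually carried out in \cite{ASW}, and it is not captured by the induction you outline on the $A^n$ (whose intermediate boundary values are $\pm\lambda$, not $0$, so the sign symmetry does not apply stagewise). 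These alternatives would need substantial reworking; the citation approach is the right call.
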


The first wo properties just follow from the fact $\Aa_{-2\lambda,2\lambda}$ has the law of a CLE$_4$ (See Section 4 of \cite{ASW}) and that this property are true for the CLE$_4$ (see \cite{SW}).  For (3) see, for example, the last comment in Section 4.3 of \cite{ASW}.

\subsection{ALE as the closed union of level lines} \label{Cons union}

In Section \ref{Cons basic A} we saw how to construct $\A{a}$ iteratively. We now show the intuitively appealing statement that this set can be obtained by just taking the union of all $(-a,-a + 2\lambda)$ level lines. 

Let $\Gamma$ be a GFF in $\D$. To simplify notations we write $\eta^{a,x,y}$ for the $(-a,-a + 2\lambda)$-level line going from $x$ to $y$, and $\tilde \eta^{a,x,y}$, $(-a+2\lambda,-a)$-level line, going also from $x$ to $y$. The following lemma makes the previous statement precise:

\begin{lemma}
	Let $(x_i)_{i\in I}$ be a countable dense sets of boundary points. Then, for all $a\in (-\lambda,\lambda)$ a.s.
	\begin{align*}
	\A{a}=\overline{\bigcup_{i,j\in \N} \eta^{a,x_i,y_i}\cup \tilde \eta^{a,x_i,y_i}}.
	\end{align*}
\end{lemma}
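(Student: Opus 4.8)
The plan is to prove two inclusions. Denote by $U := \overline{\bigcup_{i,j} \eta^{a,x_i,y_i}\cup \tilde\eta^{a,x_i,y_i}}$ the set on the right-hand side. For the inclusion $U \subseteq \A{a}$ I would argue level line by level line: each $\eta^{a,x_i,y_i}$ is a BTLS of $\Gamma$ whose harmonic function takes values in $\{-a, -a+2\lambda\} \subseteq [-a, -a+2\lambda]$, and by Remark~\ref{rem: 2 boundary values} (applied with $u=0$, $c=-a$, $d=-a+2\lambda$) it is a thin local set satisfying the defining property of a TVS for $\A{a}$ is not quite right --- rather, it is a set whose harmonic function satisfies the two-valued constraint, so by the optional-projection/monotonicity type argument (or more directly, by Lemma~\ref{donotenter}) each such level line is contained in $\A{a}$ almost surely. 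Taking a countable union over $i,j$ and then the closure, and using that $\A{a}$ is closed, gives $U \subseteq \A{a}$ a.s.

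The harder inclusion is $\A{a} \subseteq U$. Here I would use the iterative construction of $\A{a}$ from Section~\ref{Cons basic A}: $\A{a} = \overline{\bigcup_n A^n}$ where $A^1 = \eta^{a,-i,i}$ and each subsequent layer is built from level lines $\eta^{a,x,y}$ (or $\tilde\eta^{a,x,y}$) of the restricted field $\Gamma^{A^{n-1}}+h_{A^{n-1}}$ inside the complementary components. The key point is that a level line of the \emph{restricted} field, drawn in a component $O$ whose boundary values are the appropriate constants, actually agrees (as a set) with a level line of the \emph{original} field $\Gamma$ run between two of the boundary points of $O$ --- this is exactly the kind of statement established in \cite{WaWu, ASW, ALS1} via the target-independence / restriction properties of generalised level lines, together with Lemma~\ref{donotenter} to control where the level line can go. Since the boundary points of the components appearing at every finite stage are themselves intersection points of earlier level lines with $\partial D$, and these accumulate densely (or can be approximated by points of $(x_i)$ using continuity of level lines in their endpoints), every arc appearing in the construction of $A^n$ is a subset of $U$, up to a null set; induction on $n$ and taking closures then yields $\A{a} \subseteq U$ a.s.

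The main obstacle is making precise that the level lines of the restricted fields appearing in the construction coincide with (or are approximated by) level lines $\eta^{a,x_i,y_i}$ of the global GFF between points of the given countable dense set. Two technical issues arise: first, the natural boundary points produced by the construction (intersections of $\eta$ with $\partial D$) need not lie in $(x_i)_{i\in I}$, so one needs a continuity/stability statement for generalised level lines under perturbation of the target and starting points, which is available in \cite{WaWu} for piecewise-constant boundary data but must be invoked carefully; second, one must handle the passage from "level line of $\Gamma^A + h_A$ in $O$" to "level line of $\Gamma$" --- this uses that $h_A$ is the harmonic extension of piecewise-constant boundary data taking exactly the values $-a$ and $-a+2\lambda$ that make the $(-a,-a+2\lambda)$-level line well-defined in $O$, combined with the strong Markov property of the GFF and the a.s. uniqueness of such level lines. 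Once these two points are in place, the union over the countable family $(x_i)$ and a monotone-class / density argument closes the gap. A clean way to organise the whole proof is: (i) show $U$ is a thin local set of $\Gamma$ satisfying \hyperlink{tvs}{(\twonotes)} with values in $\{-a, -a+2\lambda\}$, using Lemma~\ref{BPLS}(2)--(3) for the countable union and Lemma~\ref{thin empty}/Remark~\ref{rem:thin} for thinness; (ii) conclude $U = \A{a}$ by the uniqueness statement in Proposition~\ref{cledesc2}. This reduces everything to verifying the local-set and two-valued properties of $U$, which sidesteps having to match up individual arcs and is likely the cleanest route.
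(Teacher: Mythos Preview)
Your first inclusion $U\subseteq\A{a}$ via Lemma~\ref{donotenter} is exactly what the paper does, and it is correct (your parenthetical self-correction is right: a single level line does \emph{not} have harmonic function in $\{-a,-a+2\lambda\}$, since it also has boundary value $0$ on $\partial D$, so the containment really does need Lemma~\ref{donotenter} rather than the TVS characterisation).

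For the reverse inclusion, your ``clean route'' (b) has a genuine gap. To invoke the uniqueness in Proposition~\ref{cledesc2} you must check that $h_U$ is constant equal to $-a$ or $-a+2\lambda$ on each component of $D\backslash U$. But for any finite union of level lines this is false (the harmonic function sees the zero boundary data on $\partial D$), and for the full closure $U$ the verification amounts to showing that no component of $D\backslash U$ has a boundary arc on $\partial D$ \emph{and} that the level-line boundary of each component carries a single value. Both of these are essentially equivalent to what you are trying to prove: they require knowing that the level lines in $U$ fill out the full ALE structure. Lemma~\ref{BPLS}(2)--(3) gives you that $U$ is a local set, but it does not give you control of $h_U$ beyond that, and nothing in Remark~\ref{rem:thin} or Lemma~\ref{thin empty} helps here. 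So route (b) is circular as written.

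Your route (a) is close in spirit to the paper's argument, and you correctly isolate the obstacle: the endpoints of the iterated level lines in the construction of $A^n$ are random and do not lie in $(x_i)$. However, you resolve this only by gesture (``continuity of level lines in their endpoints''). The paper's solution is more concrete and does not rely on any endpoint-continuity statement for level lines. Instead, for a component $O$ of $D\backslash A^n$ with corner points $z_1,z_2$, it picks $x_{j_m},y_{j_m}\in(x_i)$ on the boundary arc $\partial D\cap\partial O$ with $x_{j_m}\to z_2$, $y_{j_m}\to z_1$, draws the global level line $\eta^{a,x_{j_m},y_{j_m}}$, and then \emph{completes} the ALE inside the subcomponents not bordering $A^n$. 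This produces a local set $\hat A_m$ in $O$ whose harmonic function matches that of $\A{a}$ inside $O$ except on two shrinking boundary intervals. One then applies Lemma~\ref{BPLS}(3) together with a convergence lemma (Lemma~5.8 of \cite{ALS1}) to get $\hat A_m\to\A{a}\cap O$ in Hausdorff distance, forcing the iterated level line $\eta_O$ to lie in the closure of the global level lines $\eta^{a,x_{j_m},y_{j_m}}$. This is the missing idea in your sketch: the approximation is at the level of local sets and their harmonic functions, not at the level of individual curves.
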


\begin{rem}
Notice that the countable union is well-defined as all these level lines can be coupled with the same GFF so that they are measurable w.r.t. that GFF. In particular the union can be constructed by exploring these level lines in any convenient order that guarantees exploring all of them. Additionally, thanks to the reversibility of level lines (Theorem 1.1.6 of \cite{WaWu}) \[\overline{\bigcup_{i,j\in \N} \eta^{a,x_i,y_i}\cup \tilde \eta^{a,x_i,y_i}}= \overline{\bigcup_{i,j\in \N} \eta^{a,x_i,y_i}}.\]
\end{rem}

\begin{proof}
By Lemma \ref{donotenter} for all $i,j\in \N$, the level line $\eta^{a,x_i,y_i}$ is a subset of $\A{a}$. Thus, $U:=\overline{\bigcup_{i,j\in \N} \eta^{a,x_i,y_i}} \subseteq \A{a}$.
	
	To show the other direction, we may assume that $\pm i\in \{x_i:i\in \N\}$, as this was an arbitrary choice in the construction of the ALE. Now, let us prove by induction on $n\in \N$ that the sets $A^n$ of the construction in Section \ref{Cons basic A} are a.s. contained in $ \overline{\bigcup_{i,j\in \N} \eta^{a,x_i,y_i}}$. Note that this is true for $A^1$, as it is just the $(-a,-a+2\lambda)$ level line from $-i$ to $i$.

	Let us note that for every connected component $O$ of $D\backslash A^n$ whose label is not yet equal to constant $-a$ or $-a+2\lambda$, the set $A^{n+1} \backslash A^n$ restricted to (the closure of) $O$ is a level line $\eta_0$ belonging to a unique loop $\ell_O\subseteq \bar O$ of $\A{a}$ such that $\ell_O\cap A^n\neq \emptyset$. Thus, to prove the induction step it is just enough to prove that the level line $\eta_O$ is contained in $U$.
	
	Assume WLOG that we are in a connected component $O$ of $D\backslash A^n$ such that the boundary condition of $h_{A^n}$ restricted to $A_n$ is $2\lambda-a$, this implies that $\ell_O$ is labelled $2\lambda-a$. Denote by $(z_1,z_2)$ the counter-clockwise arc on $\partial \D \cap \partial O$. 
	
	Pick $x_{j_m}$ and $y_{j_m}$ on the arc $(z_1, z_2)$ with $y_{j_m} \to z_1$ and $x_{j_m} \to z_2$. Draw $\eta^{a,x_{j_m}, y_{j_m}}$. Now we finish the construction of $\A{a}$ inside each connected component of $O \backslash \eta^{a,x_{j_m}, y_{j_m}}$ that does not have a part of $A^n$ on its boundary, exactly as in Section \ref{Cons basic A}. This way we obtain a local set $\hat A_m$ inside $O$ such that $h_{\hat A_m}$ is equal to either $-a$ or $-a+2\lambda$ inside all the connected components of $O \backslash \hat A$ which do not have a part of $A^n$ on its boundary. Moreover, in the one remaining component the boundary values are $-a+2\lambda$ everywhere, but on the two small intervals between $z_2$ and the right-most boundary intersection point of $\eta^{a,x_{j_m}, y_{j_m}}$ and between $z_1$ and the left-most boundary intersection point of $\eta^{a,x_{j_m}, y_{j_m}}$, where the boundary value are zero. Notice that the boundary of this one component did not change during the completion of $\A{a}$ inside the other components. 
	
	Now from Lemma \ref{BPLS} (3) and Lemma 5.8 in \cite{ALS1} (or Lemma 4.5 in \cite{SchSh2})it follows that $\hat A_m$ converges in probability to the two-valued set $\A{a}$ inside $O$ w.r.t the Hausdorff distance. In particular, this means that $\eta_O$ is contained in the closure of $\bigcup_m \eta^{a,x_{j_m}, y_{j_m}}$ and the induction step follows.

\end{proof}

\subsection{Boundary touching of single loops}

We will now make our first step towards understanding the intersection between loops: we study when the loops of $\Aa_{-a,b}$ touch the boundary. Moreover, we also determine the Hausdorff dimension of the intersection of any of these loops with the boundary. The key ingredient here and later on in the paper is the uniqueness of TVSs: it allows us to choose a particular construction of the TVS, where the property in question becomes evident.

\begin{lemma}\label{keepingloops}
	Let $a,b, \delta> 0$ with $a+b\geq 2\lambda$. Then almost surely, 
	\begin{enumerate}
		\item Each loop of of $\Aa_{-a,b}$ with label $-a$ is also a loop of $\Aa_{-a,b+\delta}$ with label $-a$.
		\item A loop $\ell$ of $\Aa_{-a,b}$ labelled $-a$ touches the boundary iff $a<2\lambda$ and $\ell$ is a loop of $\A{a}$ labelled $-a$.
	\end{enumerate}
\end{lemma}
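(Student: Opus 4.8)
The plan is to prove part~(1) first and then obtain part~(2) by first settling the rigid regime $a\ge 2\lambda$ and bootstrapping the regime $a<2\lambda$ from it; throughout, the key device is the uniqueness of TVS (Proposition~\ref{cledesc2}), which lets us identify any thin local set whose harmonic function takes values in a prescribed two-element set with the corresponding two-valued set. For part~(1), rather than argue from monotonicity directly, I would produce a construction of $\Aa_{-a,b+\delta}$ in which the label-$(-a)$ loops of $\Aa_{-a,b}$ are visibly untouched: explore $\Aa_{-a,b}$, then inside each connected component $O$ of $D\setminus\Aa_{-a,b}$ labelled $b$ explore a conditionally independent copy of $\Aa_{-(a+b),\delta}(\Gamma^{\Aa_{-a,b}},O)$ --- legitimate since $(a+b)+\delta>2\lambda$ --- and do nothing in the components labelled $-a$. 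The resulting set $\widetilde A$ is thin (Remark~\ref{rem:thin}) and satisfies $h_{\widetilde A}\in\{-a,b+\delta\}$ everywhere: it equals $-a$ in a label-$(-a)$ component, and it equals $b$ plus an element of $\{-(a+b),\delta\}$ in a label-$b$ component. By uniqueness $\widetilde A=\Aa_{-a,b+\delta}$, and since no exploration took place in a label-$(-a)$ component $O$ of $\Aa_{-a,b}$, that $O$ is still a component of $D\setminus\Aa_{-a,b+\delta}$ with label $-a$; hence its boundary loop is a loop of $\Aa_{-a,b+\delta}$ labelled $-a$.

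\emph{The case $a\ge 2\lambda$ of part~(2).} Since the right-hand side of the equivalence fails here, it suffices to show that a.s.\ no loop of $\Aa_{-a,b}$ labelled $-a$ meets $\partial D$. Suppose $\ell=\partial O$ were such a loop with $z_0\in\ell\cap\partial D$. By part~(1) we may replace $b$ by $\max(b,2\lambda)$ without changing the curve $\ell$ or the point $z_0$, so assume in addition $b\ge 2\lambda$. Then $\Aa_{-2\lambda,2\lambda}\subseteq\Aa_{-a,b}$ by monotonicity, so the connected set $O\subseteq D\setminus\Aa_{-a,b}\subseteq D\setminus\Aa_{-2\lambda,2\lambda}$ lies inside a single component $O_2$ of $D\setminus\Aa_{-2\lambda,2\lambda}$, whence $z_0\in\overline O\subseteq\overline{O_2}$. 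But $\partial O_2$ is a loop of $\Aa_{-2\lambda,2\lambda}$, which has the law of a CLE$_4$, so it does not meet $\partial D$ by Proposition~\ref{BPCLE}(2); together with $O_2\subseteq D$ this gives $\overline{O_2}\cap\partial D=\emptyset$, contradicting $z_0\in\overline{O_2}\cap\partial D$.

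\emph{The case $0<a<2\lambda$ of part~(2).} Now $\A{a}=\Aa_{-a,-a+2\lambda}$ is defined and $b\ge -a+2\lambda$. For ``$\Leftarrow$'': if $\ell$ is a loop of $\A{a}$ labelled $-a$, then by Remark~\ref{ALE} the component it bounds has closure meeting $\partial D$, so $\ell$ meets $\partial D$; and by part~(1), applied with $\delta=a+b-2\lambda$ (the case $\delta=0$ being trivial), $\ell$ is a loop of $\Aa_{-a,b}$ labelled $-a$. For ``$\Rightarrow$'': we may assume $a+b>2\lambda$, since $a+b=2\lambda$ gives $\Aa_{-a,b}=\A{a}$. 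As in part~(1), construct $\Aa_{-a,b}$ by starting from $\A{a}$ and, inside each component $O'$ of $D\setminus\A{a}$ labelled $-a+2\lambda$, exploring a conditionally independent copy of $\Aa_{-2\lambda,\,a+b-2\lambda}(\Gamma^{\A{a}},O')$, while doing nothing in the components labelled $-a$; one checks as before that this produces a thin local set with harmonic function in $\{-a,b\}$, hence equal to $\Aa_{-a,b}$ by uniqueness. Consequently every loop of $\Aa_{-a,b}$ labelled $-a$ is either (i) a loop of $\A{a}$ labelled $-a$, or (ii) a loop labelled $-2\lambda$ of $\Aa_{-2\lambda,\,a+b-2\lambda}(\Gamma^{\A{a}},O')$ for some label-$(-a+2\lambda)$ component $O'$ of $\A{a}$. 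Applying the case $a\ge 2\lambda$, already proved, in the simply connected domain $O'$ to the GFF $\Gamma^{\A{a}}|_{O'}$ with parameters $(2\lambda,\,a+b-2\lambda)$ shows that a type (ii) loop meets neither $\partial O'$ nor --- since $\overline{O'}\cap\partial D\subseteq\partial O'$ --- $\partial D$. Hence any label-$(-a)$ loop of $\Aa_{-a,b}$ that meets $\partial D$ must be of type (i), i.e.\ a loop of $\A{a}$ labelled $-a$, as claimed.

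I expect the crux to be the ``$\Rightarrow$'' direction of part~(2): the point is to find the construction of $\Aa_{-a,b}$ that at once keeps the $\A{a}$-loops intact and confines every other label-$(-a)$ loop strictly inside a label-$(-a+2\lambda)$ component of $\A{a}$, where the previously established rigidity at parameter $2\lambda$ precludes boundary contact. The residual verifications --- that the iterated constructions are genuinely thin local sets and that the labels are tracked correctly --- are routine given Remark~\ref{rem:thin}, Lemma~\ref{BPLS}, and Proposition~\ref{cledesc2}.
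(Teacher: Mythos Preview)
Your proof is correct and follows essentially the same approach as the paper: for part~(1) you build $\Aa_{-a,b+\delta}$ by exploring $\Aa_{-(a+b),\delta}$ inside the label-$b$ components of $\Aa_{-a,b}$ and invoke uniqueness, and for part~(2) you first use monotonicity together with the CLE$_4$ non-boundary-touching property to handle $a\ge 2\lambda$, then bootstrap the case $a<2\lambda$ by constructing $\Aa_{-a,b}$ from $\A{a}$ via inner $\Aa_{-2\lambda,a+b-2\lambda}$ explorations. Your write-up is in fact slightly more explicit than the paper's in a couple of places (the closure argument $\overline O\subseteq\overline{O_2}$ when $a\ge 2\lambda$, and the observation $\overline{O'}\cap\partial D\subseteq\partial O'$ for type~(ii) loops).
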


\begin{proof}
	For the first part, note that we can construct $\Aa_{-a,b+\delta}$ in the following way:
	\begin{itemize}
		\item explore $\Aa_{-a,b}$; 
		\item explore $\Aa_{-a-b, \delta}(\Gamma^{\Aa_{-a,b}},O)$ inside each connected component $O$ of $D\backslash \Aa_{-a,b}$ labelled $b$. 
	\end{itemize}
	Defining $A'$ as the closed union of $\Aa_{-a,b}$ with the newly explored sets, we see that it is a BTLS that satisfies \hyperlink{tvs}{(\twonotes)} with levels $-a$ and $b+\delta$. The fact that that it is thin follows as in Remark \ref{rem:thin}. Thus, $A'=\Aa_{-a,b+\delta}$ by uniqueness (Proposition \ref{cledesc2}). The claim now follows as in the second step there were no explored set inside the loops labelled $-a$.  
	
	For the second part, let us first assume $a \geq 2\lambda$ and show that no loop with label $-a$ touches the boundary. If $b\geq 2\lambda$, then $\Aa_{-2\lambda,2\lambda}\subseteq \Aa_{-a,b}$ by monotonicity of TVS and Proposition \ref{BPCLE} (ii) we see directly that there are no loops of $\Aa_{-a,b}$ touching the boundary. If $b <2\lambda$, then from part (1) it follows that all loops of $\Aa_{-a,b}$ with label $-a$ are also loops of $\Aa_{-a, 2\lambda}$ with the same label and thus do not touch the boundary.
	
	Now, let us study the case $a <2 \lambda$. By using the part (i) and the fact that all loops of $\A{a}$ touch the boundary we get that all loops of $\Aa_{-a,b}$ that are loops of $\A{a}$ touch the boundary. We still need to show that these are the only ones. Remember that $b \geq -a+2\lambda$. Thus, one can construct $\Aa_{-a,b}$ by first exploring $\A{a}$ and then exploring $ \Aa_{-2\lambda,b+a-2\lambda}(\Gamma^{\A{a}},O)$ inside all connected components $O$ of $D\backslash \A{a}$ with the label $-a+2\lambda$. Thus, the loops with label $-a$ in $\Aa_{-a,b}$ are of two types: either those with labelled $-a$ in $\A{a}$, or those with the label $-2\lambda$ in $ \Aa_{-2\lambda,b+a-2\lambda}(\Gamma^{\A{a}},O)$. We conclude by noting that, thanks to the previous paragraph, the latter loops do not touch the boundary of the domain.
	
\end{proof}

To understand how ``often'' the loops of $\Aa_{-a,b}$ hit the boundary, we need to know the boundary touching behaviour of $SLE_4(\rho_1, \rho_2)$. The case of $\SLE_4(\rho)$ is covered in \cite{Lukas}, and for all other $\kappa\neq 4$ it was covered in Theorem 1.6 of \cite{MW}. The full result then follows by, for example, absolute continuity for the GFF and Proposition 14 of \cite{ASW}. 
\begin{prop}[\cite{Lukas}] \label{dimension bd}
	Take $\rho_1,\rho_2 >-2$ and let $\eta$ be an $\SLE_4(\rho_1, \rho_2)$ on $\H$. Then
	\[
		\text{Dim}_{Haus} (\eta \cap \R^+) = \max\left \{1-\frac{(\rho_2+2)^2}{4},0\right \}.\]
\end{prop}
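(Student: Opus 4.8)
The plan is to reduce the statement to the known dimension results for SLE$_\kappa(\rho)$ and SLE$_\kappa(\rho_1,\rho_2)$. First I would recall that by Theorem 1.1.1 of \cite{WaWu}, the $(-a,-a+2\lambda)$-level line of $\Gamma$ started from the origin towards $\infty$ in $\H$ is distributed as an $\SLE_4(\rho_1,\rho_2)$ process with $\rho_1 = -a/\lambda$ and $\rho_2 = a/\lambda - 2$; equivalently, a generic $\SLE_4(\rho_1,\rho_2)$ arises from a generalised level line with appropriate piecewise constant boundary data, so there is no loss of generality in working directly with level lines and then transferring the conclusion back to $\SLE_4(\rho_1,\rho_2)$. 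The boundary-touching behaviour of such a curve on $\R^+$ is governed only by the force point to the right of the starting point, i.e.\ by $\rho_2$; this is the content of the comparison to the one-sided case $\SLE_4(\rho)$ treated in \cite{Lukas} and, for $\kappa \neq 4$, in Theorem 1.6 of \cite{MW}.

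The key steps, in order, are as follows. (1) Use absolute continuity of the GFF (equivalently, of the Loewner driving function) on compact subsets of $\R^+$ bounded away from the origin: away from the left force point, the law of the curve near $\R^+$ is mutually absolutely continuous with respect to that of a one-sided $\SLE_4(\rho_2)$, so the Hausdorff dimension of $\eta \cap \R^+$ is unchanged by the presence of $\rho_1$. This is where Proposition 14 of \cite{ASW} enters, allowing one to patch together the local pictures near each point of $\R^+ \setminus \{0\}$. (2) Apply the dimension computation for $\SLE_\kappa(\rho)$: for $\kappa = 4$, the curve touches $\R^+$ if and only if $\rho_2 < 0$, i.e.\ $\rho_2 \in (-2,0)$, and in that regime $\dim_{\mathrm{Haus}}(\eta \cap \R^+) = 1 - (\rho_2+2)^2/4$, while for $\rho_2 \geq 0$ the intersection is empty (dimension $0$). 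Combining these two cases gives exactly the $\max\{1-(\rho_2+2)^2/4,\,0\}$ formula. (3) Check the endpoint/degenerate cases: when $\rho_2 = -2$ the process is not defined (hence the hypothesis $\rho_2 > -2$), and the origin itself contributes nothing to the dimension, so one may freely ignore a neighbourhood of $0$.

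I expect the main obstacle to be Step (1): making the absolute-continuity argument rigorous uniformly near $\R^+$. One has to be careful that the Radon–Nikodym derivative between the two-force-point and one-force-point Loewner evolutions stays bounded (in $\L^p$, say) up to the first time the curve gets close to the left force point or to $0$, and then use a covering of $\R^+$ by countably many compact arcs together with countable stability of Hausdorff dimension; one also needs that the curve a.s.\ does not accumulate all of its $\R^+$-intersection near the two endpoints, which again follows from the local absolute continuity. Once this localisation is in place, the dimension itself is simply quoted from \cite{Lukas} and \cite{MW}, and the formula drops out. The transfer back from level lines to a general $\SLE_4(\rho_1,\rho_2)$ is immediate since every such process can be realised as a generalised level line of a GFF with two-step piecewise constant boundary data, as recalled in Section \ref{LLs}.
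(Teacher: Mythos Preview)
Your proposal is correct and follows exactly the route the paper takes: the paper does not prove this proposition but simply remarks that the one-force-point case $\SLE_4(\rho)$ is handled in \cite{Lukas} (and for $\kappa\neq 4$ in Theorem 1.6 of \cite{MW}), and that the two-force-point statement follows by absolute continuity for the GFF together with Proposition 14 of \cite{ASW}. Your Steps (1)--(3) are precisely an expansion of that one-sentence justification, so there is nothing to add.
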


Now in the construction of  $\A{a}$ (see Section \ref{Cons basic A}) the level lines used to build the loops labelled $-a$ have the law of SLE$_4(-a/\lambda,a\lambda-2)$ and $\SLE_4(-a/\lambda, 0)$ processes (see Theorem 1.1.1 \cite{WaWu}). Moreover, from the construction of $\A{a}$ it follows each loop is finished after a finite number of iterations. Thus, we can calculate the a.s. dimensions of the boundary intersection of the loops labelled $-a$ in $\A{a}$. Together with Lemma \ref{keepingloops} this implies the following corollary.
\begin{cor}\label{cor bd}
	Let $0<a<2\lambda$, and $\Gamma$ a GFF in $\H$. Then a.s. any loop of $\Aa_{-a,b}$ labelled $-a$ either does not touch the boundary or it touches the boundary infinitely often. Moreover, in the latter case, the set of intersection points has Hausdorff dimension $1-(2-a/\lambda)^2/4$.
\end{cor}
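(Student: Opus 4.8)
The plan is to reduce the statement to a dimension computation for the $\SLE_4$-type level lines out of which the loops labelled $-a$ are built, exploiting the uniqueness of TVS (which lets us pick the construction of Section \ref{Cons basic A}) and then reading off the answer from Proposition \ref{dimension bd}. First, by Lemma \ref{keepingloops}(2) a loop $\ell$ of $\Aa_{-a,b}$ labelled $-a$ that meets $\partial D$ is precisely a loop of $\A{a}$ labelled $-a$; for every other such loop the first alternative holds trivially. So it suffices to show that for a loop $\ell$ of $\A{a}$ labelled $-a$ one has $\dim_{\mathrm{Haus}}(\ell\cap\partial D)=d:=1-(2-a/\lambda)^2/4$. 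Since $a/\lambda\in(0,2)$ we have $d>0$, so such an intersection is uncountable, in particular infinite, which yields the ``infinitely often'' alternative. By conformal invariance we may carry out the argument in $\D$.

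Next, fix a loop $\ell$ of $\A{a}$ labelled $-a$. As recalled before the statement, each loop of $\A{a}$ is completed after finitely many steps of the construction of Section \ref{Cons basic A}, so the Jordan curve $\ell$ is a finite union of arcs $\gamma_1,\dots,\gamma_k$, each a sub-arc of one of the level lines explored at some stage; moreover, along every $\gamma_i$ the component enclosed by $\ell$, on which $h_{\A{a}}\equiv -a$, lies on the side of the arc carrying harmonic value $-a$. Conditionally on the already-explored sets, each of these level lines is a generalised level line of a GFF plus a bounded piecewise-constant harmonic function; mapping the component in which it was drawn conformally to $\H$ with the arc running from $0$ to $\infty$, Theorem 1.1.1 of \cite{WaWu}, together with local absolute continuity for the GFF (as used in the proof of Proposition \ref{dimension bd}), identifies it locally with an $\SLE_4(\rho_1,\rho_2)$ whose force point on the $-a$--side carries weight $\rho_1=-a/\lambda$, the other being $\rho_2\in\{a/\lambda-2,\,0\}$.

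Now apply Proposition \ref{dimension bd} (in the form symmetric under exchanging the two force points and $\R^\pm$) to each arc $\gamma_i$: its intersection with $\partial D$ has dimension at most $\max\{1-(\rho_1+2)^2/4,0\}=d$, and taking the union over the finitely many $i$ gives $\dim_{\mathrm{Haus}}(\ell\cap\partial D)\le d$. For the matching lower bound and the dichotomy I would use the local version of this estimate: by scale invariance and the domain Markov property of $\SLE_4(\rho)$ with $\rho\in(-2,0)$, its boundary-touching set has dimension $d$ in every neighbourhood of each of its points, so it is either empty or uncountable of dimension $d$. Since $\ell$ meets $\partial D$ (Remark \ref{ALE} and the proof of Lemma \ref{keepingloops}), some $\gamma_i$ meets $\partial D$ along its $-a$--side and hence does so on a set of dimension $d$, forcing $\dim_{\mathrm{Haus}}(\ell\cap\partial D)=d$ and $\ell\cap\partial D$ infinite; the same local statement excludes a finite nonempty intersection, so ``empty or infinite'' holds.

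The main obstacle lies in the middle step: tracking through the several generations of the construction that the side of each arc abutting the $-a$ loop always produces the force-point weight $-a/\lambda$ (and not a weight coming from a jump across an older level line), and upgrading Proposition \ref{dimension bd} from a statement about the dimension of the whole boundary-intersection to the local version needed both for the lower bound and for ruling out finitely many touching points. The absolute-continuity transfer from plain $\SLE_4(\rho_1,\rho_2)$ to the level lines appearing here is routine but must be set up with some care, since at intermediate stages the boundary data consists of more than two pieces.
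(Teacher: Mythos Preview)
Your proposal is correct and follows essentially the same route as the paper: reduce via Lemma \ref{keepingloops}(2) to loops of $\A{a}$, note that each such loop is built from finitely many level-line arcs which have the law of $\SLE_4(-a/\lambda,a/\lambda-2)$ or $\SLE_4(-a/\lambda,0)$, and read off the boundary-intersection dimension from Proposition \ref{dimension bd}. The paper's argument is the terse paragraph just before the corollary and does not spell out the lower bound, the ``infinitely often'' dichotomy, or the absolute-continuity bookkeeping you flag; your concerns are legitimate technical details, but they are exactly the ones the paper also leaves implicit.
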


Finally, recall that one can construct $\Aa_{-a,b}$ by first exploring $\A{a}$ and then exploring sets $\Aa_{-2\lambda,b+a-2\lambda}$ inside loops with the label $-a+2\lambda$. Thus from (2) of Lemma \ref{keepingloops} and from the fact that $\A{a}=M$ where $M$ is the union of all loops of $\A{a}$ with the label $-a$ (see Remark \ref{ALE}), it follows that we can inversely reconstruct $\A{a}$ from $\Aa_{-a,b}$ by just observing its intersection with the boundary.
\begin{cor}\label{recons}
	Suppose $b\neq a<2\lambda$. Let $M$ be the union of all loops of $\Aa_{-a,b}$ touching the boundary with dimension $1-(2-a/\lambda)^2/4 $, then almost surely $ M=\A{a}$.
\end{cor}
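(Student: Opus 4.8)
The plan is to show that the loops whose (closed) union constitutes $M$ are precisely the loops of $\A{a}$ labelled $-a$, so that $M=\A{a}$ follows at once from Remark \ref{ALE}. Throughout I would set $d(a):=1-(2-a/\lambda)^2/4$ and use that $a\mapsto d(a)$ is strictly increasing on $(0,2\lambda)$ and takes values in $(0,1)$ there. Recall also that $\Aa_{-a,b}$ exists (hence the statement is non-vacuous) only when $a+b\geq 2\lambda$.

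First I would analyse the loops labelled $-a$. Since $a<2\lambda$, Lemma \ref{keepingloops}(2) says that a loop of $\Aa_{-a,b}$ with label $-a$ touches $\partial D$ if and only if it is a loop of $\A{a}$ labelled $-a$, and Lemma \ref{keepingloops}(1), applied with the increment $\delta=a+b-2\lambda\geq 0$ (the case $\delta=0$ being trivial), shows conversely that every loop of $\A{a}$ labelled $-a$ is a loop of $\Aa_{-a,b}$ with the same label. Hence the loops of $\Aa_{-a,b}$ labelled $-a$ that touch $\partial D$ are exactly the loops of $\A{a}$ labelled $-a$, and for any such loop $\ell$, Corollary \ref{cor bd} gives that $\ell\cap\partial D$ has Hausdorff dimension exactly $d(a)>0$; in particular $\ell$ does meet $\partial D$.

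Next I would analyse the loops labelled $b$, via the identity $\Aa_{-a,b}(\Gamma)=\Aa_{-b,a}(-\Gamma)$ (under which a loop labelled $b$ for $\Gamma$ is a loop labelled $-b$ for $-\Gamma$). If $b\geq 2\lambda$, Lemma \ref{keepingloops}(2) applied to $\Aa_{-b,a}(-\Gamma)$ shows that no loop of $\Aa_{-a,b}$ labelled $b$ touches $\partial D$. If $b<2\lambda$, Corollary \ref{cor bd} applied to $\Aa_{-b,a}(-\Gamma)$ shows that every loop of $\Aa_{-a,b}$ labelled $b$ either misses $\partial D$ or meets it in a set of Hausdorff dimension $d(b)$. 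Now $\Aa_{-a,b}$ has only countably many loops (its complement has countably many connected components), each of the dimensions above is a.s.\ the stated deterministic constant, and $a\neq b$ forces $d(a)\neq d(b)$ (when $b<2\lambda$) by strict monotonicity of $d$ on $(0,2\lambda)$; putting these together, a.s.\ the loops $\ell$ of $\Aa_{-a,b}$ with $\dim_{\mathrm H}(\ell\cap\partial D)=d(a)$ are exactly the loops of $\A{a}$ labelled $-a$. Taking the closed union and invoking Remark \ref{ALE} gives $M=\A{a}$.

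The one genuinely delicate point is the separation of the $-a$-labelled loops from the $b$-labelled ones: this is exactly where the hypothesis $a\neq b$ enters, through the strict monotonicity of $d$, and it relies on the boundary-intersection dimension being an almost sure constant so that no single loop can slip in carrying the ``wrong'' label. The remaining ingredients — countability of the loops, the fact that a positive-dimensional intersection is in particular non-empty, and the minor point that the union of the $-a$-labelled loops of $\A{a}$ (perhaps not closed) has closure $\A{a}$ — are routine.
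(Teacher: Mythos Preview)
Your proof is correct and follows essentially the same approach as the paper, whose justification is only the short paragraph preceding the corollary (combining Lemma~\ref{keepingloops}(2), Corollary~\ref{cor bd}, and Remark~\ref{ALE}). You have simply made explicit the one point the paper leaves implicit: the case analysis on the $b$-labelled loops (splitting into $b\geq 2\lambda$ versus $b<2\lambda$) that explains why the hypothesis $a\neq b$, via the strict monotonicity of $a\mapsto d(a)$, guarantees no $b$-loop contributes to $M$.
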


\subsection{Level set percolation of the 2D continuum GFF} 

It comes out that the properties of the two-valued sets we have described are already enough to describe the critical point of ``the level set percolation'' of the two-dimensional GFF. More precisely, we ask when can any two boundary points be connected inside the set $\Aa_{-a,b}$, i.e. heuristically via a path where the GFF only takes values in $[-a,b]$. We expect that such results in the continuum would help to find the level set percolation threshold for the 2D metric graph GFF and the 2D discrete GFF. 

\begin{prop} \label{Perco}Take $a,b>0$ with $a+b \geq 2\lambda$, $\Gamma$ a GFF on $\D$.
Then
\begin{itemize}
\item The boundary is not connected via $\Aa_{-a,b}$ if $a < 2\lambda$ or $b < 2\lambda$ : for any fixed $x, y \in \partial \D$, almost surely there is no continuous path joining $x$ and $y$ and whose interior is contained in $\Aa_{-a,b} \cap \D$. 
\item The boundary is connected via $\Aa_{-a,b}$ if $a,b \geq 2\lambda$: almost surely for any $x, y \in \partial \D$ there exists a continuous path joining $x$ and $y$ and whose interior is contained in $\Aa_{-a,b} \cap \D$.
\end{itemize}
\end{prop}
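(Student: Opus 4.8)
I would treat the two statements separately. For the positive one, when $a,b\ge 2\lambda$, the plan is to reduce to CLE$_4$ by monotonicity: $[-2\lambda,2\lambda]\subseteq[-a,b]$ gives $\Aa_{-2\lambda,2\lambda}\subseteq\Aa_{-a,b}$ by Proposition \ref{cledesc2}, so it suffices to join fixed $x,y\in\partial\D$ by a path with interior in $\Aa_{-2\lambda,2\lambda}\cap\D$. By Proposition \ref{BPCLE}, $\Aa_{-2\lambda,2\lambda}$ is a CLE$_4$ carpet: a locally finite family of pairwise disjoint Jordan loops, none touching $\partial\D$. Hence every loop interior $O_i$ is compactly contained in $\D$, the set $K:=\Aa_{-2\lambda,2\lambda}\cap\D=\D\setminus\bigcup_iO_i$ is connected, locally connected (by local finiteness), hence path-connected, and since the loops accumulate at every point of $\partial\D$ the points $x$ and $y$ are accessible from $K$. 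Concatenating a path from $x$ into $K$, a path inside $K$, and a path from $K$ to $y$ gives the claim. The only delicate point here is the accessibility of the two fixed boundary points from inside the carpet; this is a standard feature of CLE carpets, using local finiteness and that the loops stay away from $\partial\D$.

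For the negative statement I would assume $a<2\lambda$ (using $\Aa_{-a,b}(\Gamma)=\Aa_{-b,a}(-\Gamma)$, with $-\Gamma$ again a GFF, to swap $a$ and $b$ in the case $b<2\lambda$). Fix $x,y\in\partial\D$ and suppose there is a continuous $\gamma$ with $\gamma(0)=x$, $\gamma(1)=y$ and $\gamma((0,1))\subseteq\Aa_{-a,b}\cap\D$. The goal is to produce, almost surely, a curve $\sigma$ in $\overline\D$ with $x,y\notin\sigma$ that separates $x$ from $y$ — its two endpoints lying in the two distinct arcs of $\partial\D\setminus\{x,y\}$, possibly touching $\partial\D$ at finitely many further points — and with $\sigma\cap\D\subseteq\D\setminus\Aa_{-a,b}$. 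Then $\gamma$ must cross $\sigma$ at some $\gamma(t)$ with $t\in(0,1)$, and $\gamma(t)\in\sigma\cap\D\subseteq\D\setminus\Aa_{-a,b}$ contradicts $\gamma(t)\in\Aa_{-a,b}$. To build $\sigma$ I would use the construction of $\Aa_{-a,b}$ (Section \ref{Construction A}), which is construction-independent by uniqueness (Proposition \ref{cledesc2}). Since $a<2\lambda$, we have $2\lambda-a\le b$, so $\A{a}\subseteq\Aa_{-a,b}$, and, exactly as in the proof of Lemma \ref{keepingloops}, $\Aa_{-a,b}$ is obtained from $\A{a}$ by exploring only inside the cells labelled $2\lambda-a$; hence every cell of $\D\setminus\A{a}$ labelled $-a$ is still a complementary component of $\Aa_{-a,b}$. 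Using the description of $\A{a}$ as a closed union of $(-a,-a+2\lambda)$-level lines (Section \ref{Cons union}) and the boundary-touching estimate of Corollary \ref{cor bd} (together with its analogue for the label $2\lambda-a$, via $\Aa_{-a,b}(\Gamma)=\Aa_{-b,a}(-\Gamma)$ with $b=2\lambda-a$), one reads off that near $\partial\D$ the set $\Aa_{-a,b}$ consists of countably many loops touching the boundary in dimension-$<1$ sets, between which the complementary cells chain up along $\partial\D$; taking a level line of $\A{a}$ from a point of $(x,y)$ to a point of $(y,x)$ and following the adjacent cells yields $\sigma$.

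The main obstacle is precisely this last step: turning the boundary-touching picture into an honest crosscut of $\D$ separating $x$ from $y$ whose interior avoids $\Aa_{-a,b}$. One has to show that the complementary components of $\Aa_{-a,b}$ accumulating on $\partial\D$ really do link up — pairwise meeting along $\partial\D$ — into a chain crossing $\{x,y\}$, and to handle the limiting argument if no single finite chain is available. When $b<2\lambda$ one must simultaneously track the $b$-labelled loops, which touch $\partial\D$ like the $-b$-labelled loops of the ALE of the field $-\Gamma$; everything else is bookkeeping on top of the construction of $\Aa_{-a,b}$ and Corollary \ref{cor bd}.
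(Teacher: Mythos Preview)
Your positive part (the case $a,b\ge 2\lambda$) is morally correct but leaves the real work undone. The paper also reduces to $\Aa_{-2\lambda,2\lambda}$ by monotonicity, but then it \emph{constructs} the path explicitly: follow the chord $[x,y]$, and whenever you enter a CLE$_4$ loop, detour around it; local finiteness ensures that the detours at scale $\epsilon$ are of size $\le\epsilon$, so the approximations converge uniformly to a continuous path in the carpet, and one checks separately that the limit does not touch $\partial\D$ except at $x,y$. Your route via ``connected + locally connected $\Rightarrow$ path-connected'' is fine for the closed carpet $\Aa_{-2\lambda,2\lambda}\cup\partial\D$, but you then need the interior of the path to lie in $\D$, which is exactly the accessibility issue you flag and do not resolve. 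The paper's explicit construction handles this point directly and is no harder than the topological argument you sketch.

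For the negative part you are making life much harder than necessary, and the ``main obstacle'' you identify is an artifact of your approach, not of the problem. You try to thread a crosscut $\sigma$ through the \emph{complement} of $\Aa_{-a,b}$ by chaining complementary cells along $\partial\D$. The paper instead observes that a \emph{single} complementary cell already does the job: pick $z_1,z_2\in\partial\D$ with $z_1,x,z_2,y$ in cyclic order and draw the $(-a,-a+2\lambda)$ level line $\eta$ from $z_2$ to $z_1$. Almost surely $\eta$ avoids $x$ and $y$, and some excursion of $\eta$ off $\partial\D$ separates $x$ from $y$; by the construction of $\A{a}$ this excursion lies on the boundary of a single loop $\ell$ of $\A{a}$ with label $-a$, and by Lemma~\ref{keepingloops} that loop persists as a loop of $\Aa_{-a,b}$. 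Since $x,y\notin\ell$ lie in different arcs of $\partial\D\setminus\ell$, they lie in different components of $\overline\D\setminus\ell$; any path from $x$ to $y$ with interior in $\D$ must therefore enter the interior $O$ of $\ell$, but $O\cap\Aa_{-a,b}=\emptyset$. No chain of cells, no limiting argument, and no bookkeeping of the $b$-labelled loops is needed.
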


\begin{rem}
Notice that even in the case of the ALE $\A{a}$ there are exceptional points that are connected via a continuous path in $\Aa_{-a,b} \cap \D$, corresponding to the endpoints of SLE excursions away from the boundary.
\end{rem}

\begin{rem}
We use quotation marks for ``level set percolation'' as the GFF is not defined pointwise. However, let us mention that via convergence results from the metric graph it is shown in \cite{ALS2} that the first passage set $\Aa_{-a} := \overline{\bigcup_{b \in \N} \Aa_{-a,b}}$ does correspond to a certain level set connected to the boundary: it is the limit as the mesh size goes to zero of the set of points on the metric graph that can be connected to the boundary via a path on which the metric graph GFF is bigger or equal $-a$. So when two boundary points can be joined inside $\Aa_{-a}$, then this indeed says that there is level set percolation in terms of boundary-to-boundary percolation. As the proof of the first part of the proposition directly works for $\Aa_{-a}$ as well, we can intrpret the results as follows:
\begin{itemize} 
\item the critical height for the boundary-to-boundary level set percolation of the 2D continuum GFF is $-2\lambda$, and moreover, there is percolation at the critical level.
\end{itemize}
\end{rem}

\begin{proof}
Let us first prove the first statement. It suffices to consider the case $a < 2\lambda.$ We say that $x,y\in \partial D$ are separated by a loop $\ell$ if $x,y\notin\ell$ and they are not in the same connected component of $\partial\D\backslash \ell$. Thanks to the fact that $\D$ is simply connected and that the loops are continuous curves, if $x$ and $y$ are separated, then they belong to different components of $\overline \D \backslash \ell$. Thus, to finish the proof it is enough to find a loop of $\Aa_{-a,b}$ separating $x$ from $y$. Furthermore, by Lemma \ref{keepingloops}, it suffices to find a loop of $\A{a}$ labelled $-a$ that separates $x$ and $y$.

Let us prove now that for any $x,y\in \partial D$ there is a loop  of $\A{a}$ labelled $-a$ that separates $x$ and $y$: consider some points $z_1,z_2\in \partial D$ such that $z_1,x,z_2,y$ are in counter-clockwise order, and conformally map the domain to a disk such that $\phi(z_1)=i$, $\phi(z_2)=-i$, $\phi(x)=1$. Now, consider the construction of $\A{a}$ of Section \ref{Cons basic A}. Almost surely the $(-a,-a+2\lambda)$ level line from $-i$ to $i$ separates $1$ from $\phi(y)$, and does not touch $1$ nor $\phi(y)$. In particular, there will be an excursion off the boundary of this level line (i.e. a subcurve of the level line that touches the boundary only at its two endpoints) that separates $1$ from $\phi(y)$. By the construction of $\A{a}$ such an excursion belongs to the boundary of a connected component with label $-a$ in $\D \backslash \A{a}$.

We will now prove the second statement. By monotonicity of $\Aa_{-a,b}$ it suffices to prove it for the case of $\Aa_{-2\lambda,2\lambda}$. Thanks to Proposition \ref{BPCLE} (iv), for any $\epsilon > 0$ there are only finitely many loops of $\Aa_{-2\lambda,2\lambda}$ of diameter larger than $\epsilon$. Moreover, as mentioned before the boundaries of these connected components are given by Jordan curves. Denote by CLE$_4^\epsilon$ the complement of all  the components of size larger than $\epsilon$ and call the boundaries of these components loops. By the interior of a loop we mean the open set separted by the loop from the boundary.

Now, take $x,y\in \partial \D$ and draw a straight line segment $L$ joining $x$ and $y$ parametrized by $[0,1]$. To construct a continuous curve between $x, y$ inside CLE$_4^\epsilon$ we follow the straight line $L$, unless we meet a CLE$_4^\epsilon$ loop, in which case we follow it in the clock-wise sense until we meet $L$ again. More precisely, we define $C^\epsilon$ as follows: for all $t \in [0,1]$
\begin{itemize}
	\item if $L(t)$ is not in the interior of a loop of CLE$_4^{\epsilon}$, we let $C^\epsilon(t)=L(t)$. 
	\item If $L(t)$ is in the interior of a loop, define $t^-< t$ as the last time $L(t^-)\in CLE_4^{\epsilon}$ and $t < t^+$ as the first time after $t$ such that $L(t^+)\in CLE_4^\epsilon$. Then $C^\epsilon$ restricted to $[t^-,t^+]$, follows in a counter-clockwise manner the loop that contains $L(t^-)$, from $L(t^-)$ to $L(t^+)$ (with some arbitrary continuous speed).
\end{itemize}

As the loops added between $\epsilon$ and $\epsilon'<\epsilon$ have a diameter smaller than $\epsilon$, we have that $\|C^\epsilon-C^{\epsilon'}\|_\infty \leq \epsilon$. Thus, $C^\epsilon$ converges uniformly to $C^0$ as $\epsilon \to 0$. Moreover, thanks to the fact that  $L^\epsilon$ is contained in $CLE_4^\epsilon$ and as $\bigcap_{\epsilon > 0} CLE_4^\epsilon = \Aa_{-2\lambda,2\lambda}$, $C^0$ is contained in $\Aa_{-2\lambda,2\lambda}$. Moreover, $C^0$ cannot touch the boundary in other points than $x$ or $y$: indeed, suppose for contradiction it hits some point $z\in \partial \D$ at distance $\delta$ from $L$. Then notice that all loops of $\Aa_{-2\lambda,2\lambda}$ intersecting in $L$ and not contained in $CLE_4^{\delta/2}$ stay at a distance $\delta/2$ from $z$. However, we know that any loop of $CLE_4^{\delta/2}$ stays at a positive distance from $\partial \D$. Thus the claim follows.

\end{proof}

\subsection{$\Aa_{-a,b}$ are locally finite when $a+b\leq 4\lambda$} We now prove a very basic, but important property of the TVS: we show that when $a+b\leq 4\lambda$, then almost surely $\Aa_{-a,b}$ is locally finite, i.e. there are only finitely many loops of size diameter bigger than $\epsilon$. The proof is based on the monotonicity of TVS and the fact that $\Aa_{-2\lambda,2\lambda}$ is locally finite. 
	
\begin{prop}
	Take $a+b\leq 4\lambda$ and let $\Aa_{-a,b}$ be the TVS in $\D$. Then almost surely $\Aa_{-a,b}$ is locally finite.
\end{prop}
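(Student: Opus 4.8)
The plan is to reduce everything, via monotonicity, to the local finiteness of an ALE, and then to obtain that by comparison with $\Aa_{-2\lambda,2\lambda}$.

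\textbf{Reduction.} Since $a,b>0$ and $a+b\le 4\lambda$ we have $[-a,b]\subseteq[-a,4\lambda-a]$, and $\Aa_{-a,4\lambda-a}$ is a genuine two-valued set ($-a<0<4\lambda-a$ and $a+(4\lambda-a)=4\lambda\ge 2\lambda$), so by monotonicity (Proposition~\ref{cledesc2}) $\Aa_{-a,b}\subseteq\Aa_{-a,4\lambda-a}$. Hence it suffices to prove the statement for the sets $\Aa_{-a,4\lambda-a}$, and, using $\Aa_{-a,4\lambda-a}(\Gamma)=\Aa_{-(4\lambda-a),a}(-\Gamma)$, we may assume $a\le 2\lambda$; the case $a=2\lambda$ is Proposition~\ref{BPCLE}. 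For $a\in(0,2\lambda)$ the construction of Section~\ref{Construction A} (the case $a+b=n\lambda$ with $n=4$, $u=a$, $n_1=0$, $n_2=4$), together with uniqueness of TVS, gives
\[
\Aa_{-a,4\lambda-a}=\A{a}\ \cup\!\!\!\bigcup_{\substack{O\ \mathrm{component\ of}\ \D\setminus\A{a}\\ \mathrm{labelled}\ 2\lambda-a}}\!\!\!\Aa_{-2\lambda,2\lambda}\bigl(\Gamma^{\A{a}},O\bigr),
\]
where $\Gamma^{\A{a}}|_O$ is a zero-boundary GFF in $O$, so each set in the union is a $\CLE_4$ in $O$, and hence locally finite. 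Since $\A{a}\subseteq\Aa_{-a,4\lambda-a}$, every complementary component of $\Aa_{-a,4\lambda-a}$ lies inside a component $O$ of $\D\setminus\A{a}$ (and equals it when $O$ is labelled $-a$); thus for $\eps>0$ a loop of $\Aa_{-a,4\lambda-a}$ of diameter $>\eps$ forces a component $O$ of $\D\setminus\A{a}$ with $\operatorname{diam}O>\eps$, and each such $O$ contributes at most one loop, or finitely many if labelled $2\lambda-a$ (local finiteness of the inner $\CLE_4$). So it is enough to prove that $\A{a}$ is locally finite.

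\textbf{Local finiteness of $\A{a}$.} By monotonicity $\A{a}\subseteq\Aa_{-2\lambda,2\lambda}(\Gamma)$, and, again by uniqueness, $\Aa_{-2\lambda,2\lambda}(\Gamma)=\A{a}\cup\bigcup_O B_O$, where for a component $O$ of $\D\setminus\A{a}$ with label $c\in\{-a,2\lambda-a\}$ the set $B_O=\Aa_{-(2\lambda+c),2\lambda-c}(\Gamma^{\A{a}},O)$ is a two-valued set with levels summing to $4\lambda$; since $h_{\A{a}}|_O=c$, every loop of $B_O$ is a loop of $\Aa_{-2\lambda,2\lambda}(\Gamma)$. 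The plan is then to show that there is $\eps\mapsto c(\eps)>0$ such that every component $O$ of $\D\setminus\A{a}$ with $\operatorname{diam}O>\eps$ contains a loop of $\Aa_{-2\lambda,2\lambda}(\Gamma)$ of diameter $\ge c(\eps)$. As the components are disjoint and $\Aa_{-2\lambda,2\lambda}(\Gamma)$ has only finitely many loops of diameter $>c(\eps)$ (Proposition~\ref{BPCLE}), this forces finitely many such $O$, i.e.\ the local finiteness of $\A{a}$, and with the first step the proposition follows.

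\textbf{The main obstacle.} Everything above is bookkeeping with the construction and monotonicity; the crux is the last claim, that a macroscopic component $O$ of $\D\setminus\A{a}$ carries a macroscopic $\CLE_4$ loop. I would argue as follows: each $O$ is cut out of $\D$ after finitely many level lines (its label is already one of the terminal values), all with boundary data bounded by $\max(a,2\lambda-a)$; since SLE$_4$‑type level lines split a domain into John domains, $O$ is a John domain whose constant is controlled along the construction (in the spirit of Remark~\ref{rem:thin}), so a component of diameter $>\eps$ contains a ball $B(z,r)$ with $r\ge c_1(\eps)>0$. Applying Proposition~\ref{BMinterior} inside $O$ to $B_O$, the component $U_z$ of $O\setminus B_O$ containing $z$ --- a loop‑interior of $\Aa_{-2\lambda,2\lambda}(\Gamma)$ by the label computation above --- satisfies $\CR(z;U_z)\ge c_0\,\CR(z;O)\ge c_0 r$ with a universal probability $c_0>0$ (the driving Brownian motion must exit an interval of length $4\lambda$), hence $\operatorname{diam}U_z\gtrsim c_1(\eps)$. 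Finally, since the inner sets $B_O$ are conditionally independent given $\A{a}$, a Borel--Cantelli argument excludes infinitely many components of diameter $>\eps$, as these would a.s.\ produce infinitely many $\CLE_4$ loops of diameter $\ge c(\eps)$. The John‑domain non‑degeneracy of the complementary components of $\A{a}$ is the delicate point; the comparison and the conformal‑radius estimate are then routine.
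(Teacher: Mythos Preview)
Your reduction step has a gap: the inclusion $\Aa_{-a,b}\subseteq\Aa_{-a,4\lambda-a}$ does \emph{not} by itself imply that local finiteness of the larger set gives local finiteness of the smaller one. For general closed sets $A\subset B$, the complementary components of $A$ are unions of pieces of $B$ and of components of $\D\setminus B$, so $A$ can have infinitely many large components even when $B$ does not. What is actually true (and what the paper uses) is Lemma~\ref{keepingloops}(1): every loop of $\Aa_{-a,b}$ labelled $-a$ is a loop of $\Aa_{-a,4\lambda-a}$, so local finiteness of the latter handles the $-a$-loops. The $b$-loops must be treated separately, via $\Aa_{-(4\lambda-b),b}$. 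This is easy to fix, but as written your reduction is not valid.

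The serious issue is your ``main obstacle''. You correctly identify that the argument hinges on a uniform John-domain estimate for the complementary components of $\A{a}$: a component of large diameter must contain a ball of comparable radius. You do not prove this, and the appeal to Remark~\ref{rem:thin} does not help --- that remark bounds Minkowski dimension, not John constants. While each fixed component of $\D\setminus\A{a}$ is indeed cut out by finitely many SLE$_4$-type arcs and is therefore a John domain, the number of arcs and hence the John constant varies from component to component, and you give no mechanism to control it uniformly. Without this, a long thin component $O$ could have arbitrarily small conformal radius everywhere, and your conformal-radius/Borel--Cantelli argument collapses.

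The paper sidesteps the whole difficulty by reversing the direction of the embedding. Instead of building $\Aa_{-2\lambda,2\lambda}$ \emph{from} $\Aa_{-a,4\lambda-a}$, it first explores an auxiliary set $A$ (namely $\Aa_{a-2\lambda,2\lambda}$ or $\Aa_{-2\lambda,a-2\lambda}$) and then realises $\Aa_{-a,4\lambda-a}$ as the second-step exploration \emph{inside} a component $\hat O$ of $\D\setminus A$ labelled $a-2\lambda$; together these two steps give $\Aa_{-2\lambda,2\lambda}$. On the positive-probability event $d(0,A)\ge 1/8$, a Koebe-type distortion bound (Proposition~3.85 of \cite{Lawler}) transfers local finiteness of $\Aa_{-2\lambda,2\lambda}\cap\hat O$ to its conformal image in $\D$, which has exactly the law of $\Aa_{-a,4\lambda-a}$. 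This avoids any need for shape control on ALE components.
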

\begin{rem}
	In fact, TVS are always locally finite. The only way we know how to prove this in the case when $a+b$ is larger than $4\lambda$ is to use loop soup techniques. Thus the proof is presented in \cite{ALS2}.
\end{rem}
\begin{proof}
 It suffices to show local finiteness separately for loops labelled $-a$ and for loops labelled $b$. Let us concentrate on the case $-a$, the other case following similarly. By Lemma \ref{keepingloops} it suffices to prove the claim for $\Aa_{-a,-a+4\lambda}$. As for the case $a=2\lambda$, this is (1) of Proposition \ref{BPCLE}, we may assume that $a \neq 2\lambda$.
 
 Let us construct $\Aa_{-2\lambda,2\lambda}$ in two steps: first we explore $\Aa_{a-2\lambda,2\lambda}$ if $a<2\lambda$, or $\Aa_{-2\lambda,a-2\lambda}$ if $a>2\lambda$, and call this local set $A$. Then inside all connected components $O$ of $\D\backslash A$ labelled $a-2\lambda$, we further explore $\Aa_{-a,4\lambda-a}(\Gamma^{A},O)$. 
 Now consider $\hat O$, the connected component of  $\D\backslash A$ containing $0$, and define $f_{\hat O}$ to be the conformal map from $\hat O$ to $\D$ such that $f_{\hat O}(0)=0$. With positive probability $d(0,A)\geq 1/8$ and on this event, by Proposition 3.85 of \cite{Lawler}, the image $f_{\hat O}(\ell)$ of any loop $\ell\subseteq \hat O$ of diameter smaller than $\epsilon$, has diameter smaller than $c\epsilon^{1/2}$ (for a deterministic constant $c>0$). Thus, on this event $f_{\hat O}(\Aa_{-2\lambda,2\lambda}\cap \hat O)$ is locally finite, as $\Aa_{-2\lambda, 2\lambda}$ is locally finite. 
 
 But now, conditionally on $A$ and conditioned on the label of $\hat O$ being $a-2\lambda$, the image of $f_{\hat O}(\Aa_{-2\lambda,2\lambda}\cap \hat O)$ has the law of $\Aa_{-a,-a+4\lambda}$ in $\D$ independently of $\hat O$. Thus we conclude that $\Aa_{-a,-a+4\lambda}$ is locally finite.
\end{proof}

\section{Connectivity properties for two-valued local sets}

This section aims to understand the connectivity properties of the loops of $\Aa_{-a,b}$. To do this, we formulate the connectivity properties using two different graphs. We say that two loops of $\Aa_{-a,b}$ are `side-connected' if the intersection of these two loops contains a set that is homeomorphic to a segment; we say that they are `point-connected' if their intersection is non-empty. Consider the graphs $G_s, G_p$ whose vertex set are the loops of $\Aa_{-a,b}$ and edge sets $E_s$, $E_p$ consisting of pairs of loops that are either `side-connected' or `point-connected' respectively. Notice that by definition $E_s \subset E_p$. 

We now state how the connectivity properties of the loops of $\Aa_{-a,b}$ depend on $a+b$ (see Figure \ref{fig:mainthm})
\begin{thm}\label{mainthm}
	Let $\Aa_{-a,b}$ with $a, b > 0$ and $a+b \geq 2\lambda$ be a two valued set of level $-a$ and $b$. Then 
	\begin{enumerate}
		\item If $a + b = 2\lambda$, the graph $G_s$ is equal to $G_p$. Additionally, it is connected, i.e. one can pass from each loop to any other one in finite number of steps using `side-connections'.
		\item If $2\lambda < a+b < 4\lambda$, the edge set $E_s$ is empty but the graph $G_p$ is connected, i.e. one can pass from each loop to any other one in finite number of steps using `point-connections'.
		\item If $a+b \geq 4\lambda$, then $E_p$ is empty, or in other words all loops are pairwise disjoint.
	\end{enumerate}
	Moreover, in all phases any two loops with the same label are neither side-nor point-connected, in particular $G_p$ and $G_s$ are bipartite. 
\end{thm}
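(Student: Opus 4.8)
The plan is to prove each of the three regimes by choosing the construction of $\Aa_{-a,b}$ adapted to the statement, then handle the bipartiteness claim separately using a monotonicity/coupling argument. For the regime $a+b\geq 4\lambda$, I would reduce to the case $a+b = 4\lambda$ by monotonicity (Lemma \ref{keepingloops} and Proposition \ref{cledesc2}): if $a+b>4\lambda$ then the loops labelled $-a$ come from loops of $\Aa_{-a,-a+4\lambda}$ inside components labelled $-a$, so it suffices to treat $\Aa_{-a,-a+4\lambda}$, and similarly for $b$-loops. For $a+b=4\lambda$ one uses the construction that first explores $\A{a-2\lambda}$ (when $a\geq 2\lambda$; otherwise the $\pm$-symmetric construction) and then explores a copy of $\Aa_{-2\lambda,2\lambda}$ inside each component labelled $a-2\lambda$. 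Each $\Aa_{-2\lambda,2\lambda}$ has non-intersecting loops that do not touch the boundary of the component they live in (Proposition \ref{BPCLE} (ii)), so the only danger is loops of $\A{a-2\lambda}$ themselves intersecting, or loops from different components meeting. Since ALE loops each touch $\partial D$ and two distinct components of $D\setminus\A{a-2\lambda}$ are separated, while the inner CLE$_4$ loops stay away from the component boundary, one gets pairwise disjointness. Here one has to be careful that two adjacent ALE loops, which a priori could share boundary along a level-line excursion — but in $\A{a-2\lambda}$ the loops labelled $-(a-2\lambda)$ and $-(a-2\lambda)+2\lambda$ on opposite sides of an excursion have $|{-a} - {(-a+4\lambda)}| = 4\lambda$ difference, hence $\A{a-2\lambda}$'s own loops are disjoint: this should itself follow from applying the $a+b\geq 4\lambda$ statement to $\A{a-2\lambda}$ — so the right order is first to prove disjointness of ALE-loops when the level gap is $\geq 4\lambda$, which is exactly the case $a+b=4\lambda$ treated via $\Aa_{-2\lambda,2\lambda}$, so there is no circularity once one observes $\A{a-2\lambda}$ is itself such a $\Aa$.

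For the regime $2\lambda < a+b < 4\lambda$, I would use the construction that explores $\A{u}$ for the appropriate $u\in[0,2\lambda)$ with $a = u + n_1\lambda$ and then explores further $\Aa$'s inside. The point-connectedness should be proven inductively along the construction: at each stage the newly explored loops inside a component $O$ attach to the boundary $\partial O$, which by induction is connected (through the previously constructed graph) to everything else; the emptiness of $E_s$ (no side-connections) comes from the fact that whenever two loops meet, one of them carries a label coming from a strictly deeper iteration and the level gap across the shared boundary is at least $2\lambda$, which forces the intersection to be a fractal of zero length (an SLE$_4$-type set), not a segment — this is where Proposition \ref{dimension bd} and the boundary-touching analysis of Corollary \ref{cor bd} enter. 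The case $a+b=2\lambda$ is the ALE itself, where Remark \ref{ALE} already says each excursion of the generating level line $\eta$ lies on the common boundary of one $-a$-loop and one $(2\lambda-a)$-loop, giving side-connections directly; connectivity of $G_s$ then follows because the level line $\eta$ is connected and one can walk along it from excursion to excursion, and the iterated level lines in the components attach to it — again an induction on the construction steps.

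The bipartiteness claim — that two loops with the same label are never point-connected — is in some sense the cleanest: if $\ell_1,\ell_2$ are distinct loops of $\Aa_{-a,b}$ both labelled $-a$ and $z\in\ell_1\cap\ell_2$, then $z\in\Aa_{-a,b}$ and in a neighbourhood of $z$ the set $\Aa_{-a,b}$ locally disconnects the two components of $D\setminus\Aa_{-a,b}$ surrounded by $\ell_1$ and $\ell_2$; but the harmonic function $h_{\Aa_{-a,b}}$ equals $-a$ on both sides. The clean way is to argue that a boundary point shared by two loops must be a double point of some generating level line, and level lines are simple curves with one side labelled $-\lambda$ more than the other — so locally across any level-line arc in the construction the two sides carry different labels; consequently whenever two loops of $\Aa_{-a,b}$ touch, they are separated locally by a level-line arc and hence carry labels differing by a nonzero multiple of $2\lambda$ — in particular distinct labels, so the only labels are $-a$ and $b$ with $a+b>0$, giving distinct labels. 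Formalising "locally separated by a level-line arc" is the main obstacle: one needs that in every construction the set $\Aa_{-a,b}$ near a multiple point is, up to the thin remainder, the union of finitely many SLE$_4$ arcs from the iterated level lines, and that a point in two loops lies on such an arc — this requires a careful bookkeeping of which iteration produced which piece of boundary, and is the technical heart of the whole theorem.

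I expect the main obstacle to be precisely the emptiness of $E_s$ in the middle regime: ruling out that two loops share an honest segment (not just a fractal set) requires controlling the regularity of $\Aa_{-a,b}\cap\partial O$ along the entire infinite construction uniformly, and showing that a segment of common boundary would force an SLE$_4(\rho)$ curve to contain a boundary interval, which contradicts Proposition \ref{dimension bd} since the relevant $\rho_2$ makes $1-(\rho_2+2)^2/4 < 1$.
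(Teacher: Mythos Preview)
Your proposal has the right overall architecture for Parts (1) and (3), but there is a genuine gap in Part (2), and your bipartiteness argument diverges from the paper's in a way that creates difficulties.

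\textbf{Part (2), point-connectedness.} Your inductive scheme (``newly explored loops inside $O$ attach to $\partial O$, which by induction is connected to everything else'') only establishes that each loop is connected to the boundary of its parent component in the construction tree. But that parent boundary $\partial O$ is typically \emph{not} a loop of $\Aa_{-a,b}$ --- it gets subdivided at the next step --- so it is not a vertex of $G_p(\Aa_{-a,b})$, and the induction does not close. What you actually obtain is (a version of) the paper's Claim~\ref{con to boun}: every loop reaches some boundary-touching $-a$-loop. The missing half is the paper's Claim~\ref{btb}: two distinct boundary-touching $-a$-loops of $\Aa_{-a,b}$, which lie inside \emph{different} $(-a+2\lambda)$-components of $\A{a}$ at the first stage, must be joined by a finite $G_p$-path. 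The paper proves this by an explicit alternating construction: one draws a $(-a,-a+2\lambda)$-level line, then a $(b-2\lambda,b)$-level line in the remaining component, and so on, producing a chain of touching loops from one boundary arc to another; termination comes from a geometric-domination argument on the hitting probability. This step is the technical heart of Part (2), and nothing in your proposal substitutes for it.

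\textbf{Bipartiteness.} Your plan (``two same-label loops touching would force a double point of a level line'') runs into exactly the bookkeeping problem you flag, and is not how the paper proceeds. The paper instead argues case by case via the construction: for the ALE, same-label loops could only meet at endpoints of SLE$_4(\rho)$ excursions, which are disjoint since they correspond to excursions of a Bessel-type process of dimension $<2$; for $2\lambda<a+b<4\lambda$ with $a<2\lambda$, one builds $\Aa_{-a,b}$ as $\A{a}$ followed by $\Aa_{-2\lambda,b+a-2\lambda}$ inside the $(-a+2\lambda)$-components, and then checks separately that (i) ALE $-a$-loops are pairwise disjoint (ALE bipartiteness), (ii) second-stage $-a$-loops do not touch first-stage ones (Lemma~\ref{keepingloops}(2): they do not touch the component boundary), and (iii) second-stage $-a$-loops are pairwise disjoint because they are $-2\lambda$-loops of a TVS with $2\lambda$ on one side, hence Corollary~\ref{inloops} applies. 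This decomposition avoids any local analysis of multiple points.

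\textbf{Part (3).} Your reduction is correct in spirit but tangled. The paper simply takes $a<2\lambda$ (by symmetry), builds $\Aa_{-a,-a+4\lambda}$ as $\A{a}$ followed by $\Aa_{-2\lambda,2\lambda}$ inside the $(-a+2\lambda)$-components, and observes: the retained $-a$-loops of $\A{a}$ are pairwise disjoint (same-label in an ALE), and the CLE$_4$ loops neither touch each other nor the boundary of their component (Proposition~\ref{BPCLE}). There is no circularity once the ALE same-label disjointness is established first via the Bessel argument.
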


\begin{rem}
Thus in the regime $a+b < 4\lambda$ one can define conformally invariant distance between the loops of $\Aa_{-a,b}$ using the graph distance $G_p$. We will see in Section \ref{aprox CLE_4} how to encode the distances to the boundary as labels of another family of thin local sets and how to rescale these distances in order to extend to define distances in the case $a=b = 2\lambda$.	

 \end{rem}

\begin{figure}[ht!]    
	\centering
	\includegraphics[scale=0.35]{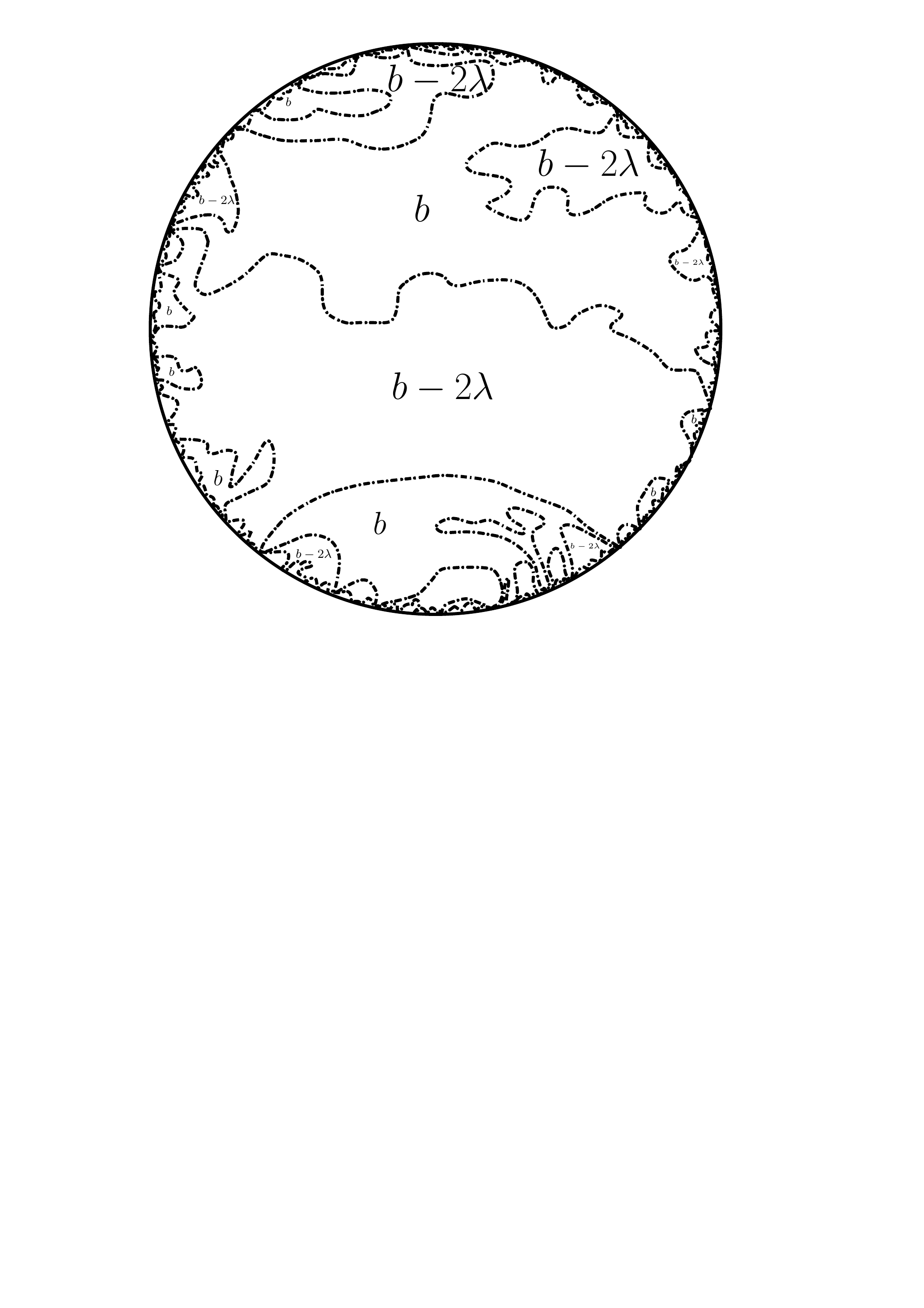}
	\includegraphics[scale=0.35]{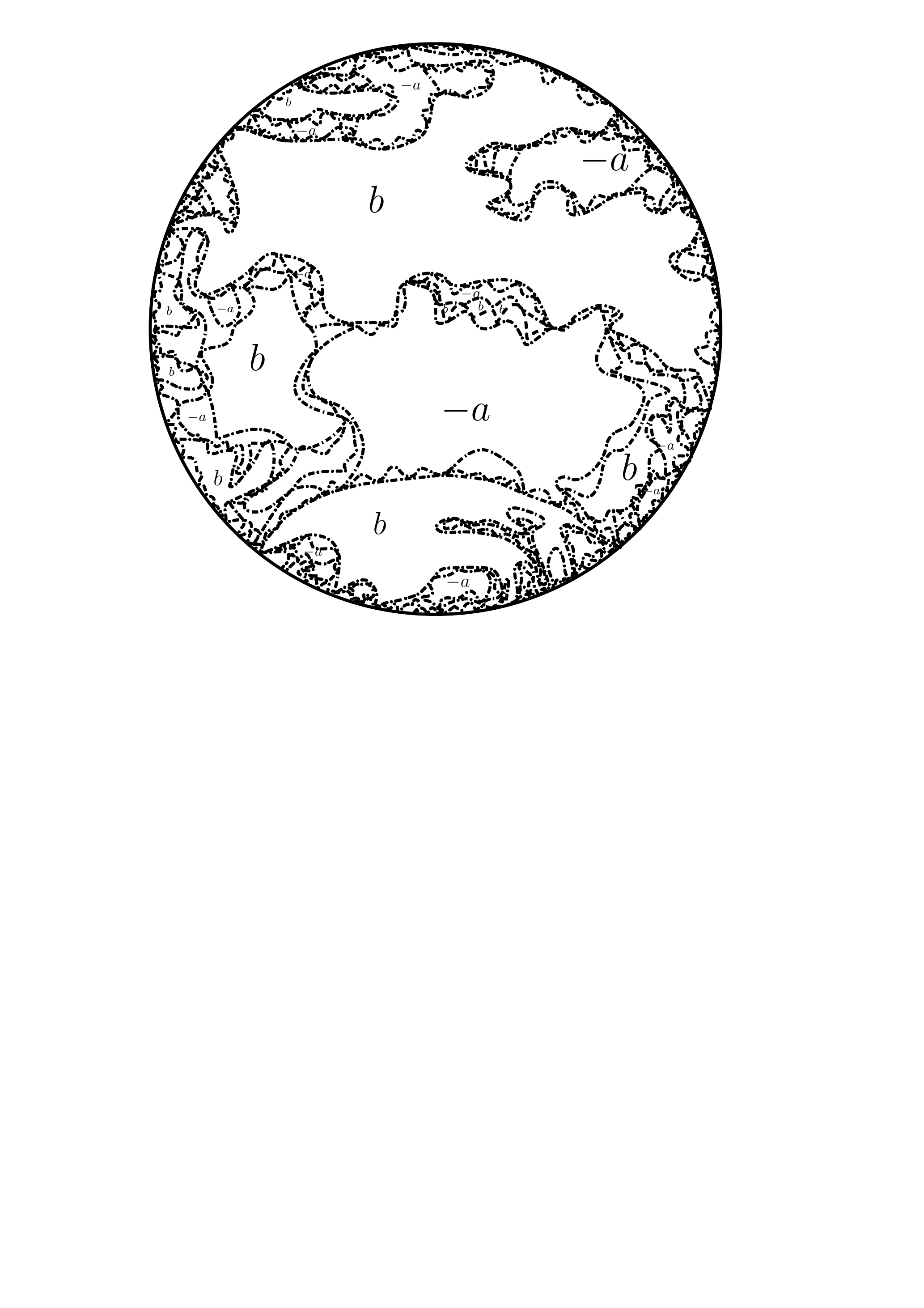}
	\includegraphics[scale=0.22]{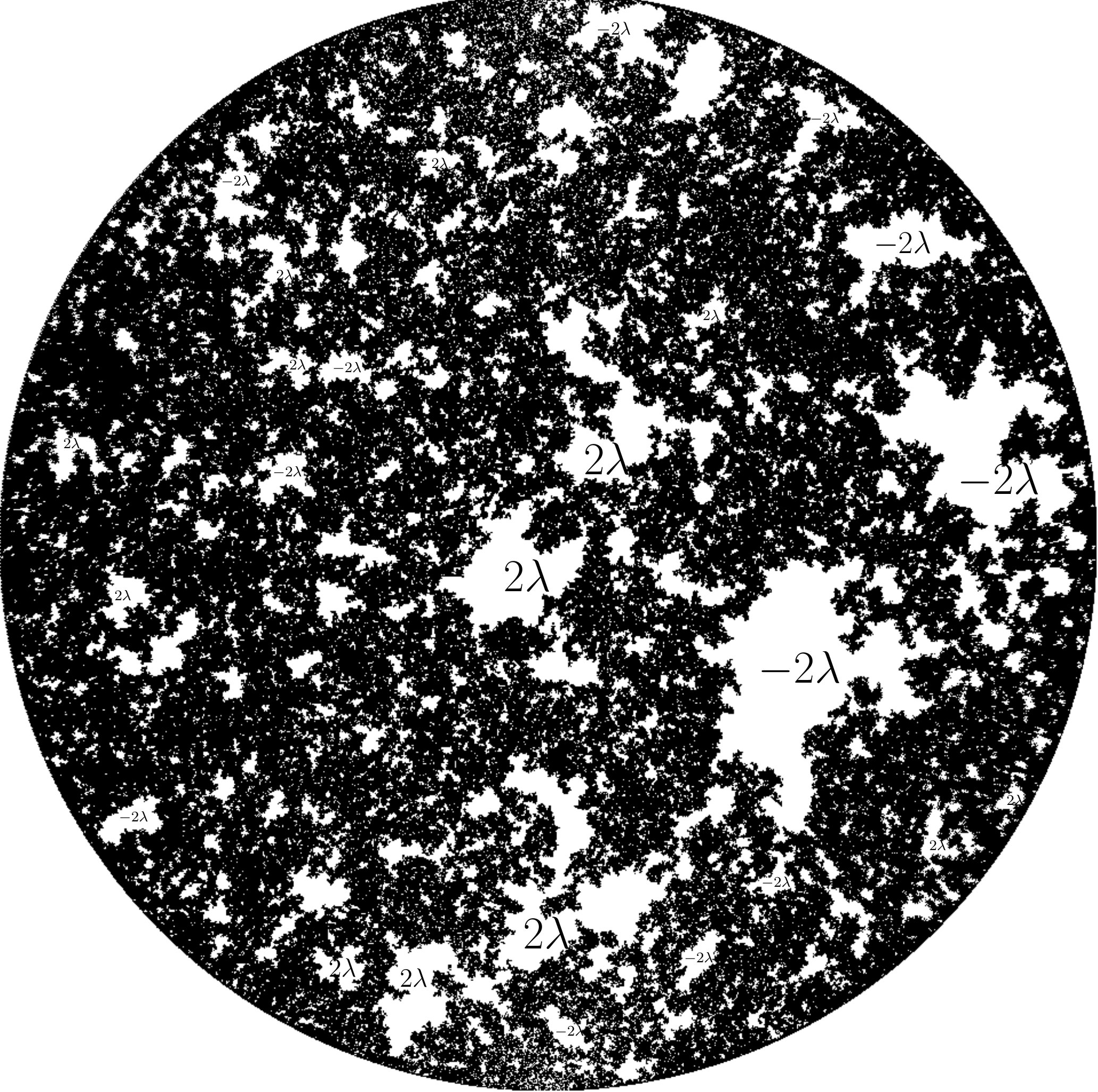}
	\caption {The three phases described in Theorem \ref{mainthm}. The left picture represents $\Aa_{b-2\lambda,b}$: two loops that intersect share a whole side. The middle pictures represents $\Aa_{-a,b}$ with $a\in(2\lambda-b,4\lambda-b)$: each loops intersects with infinitely many other loops, but no two loops share a side; also loops with the same label do not touch. The right picture is a simulation by D. Wilson of $\Aa_{-2\lambda,2\lambda}$, in which case all loops are pairwise disjoint.}
	\label {fig:mainthm}
\end{figure}

\subsection{Proof of Theorem \ref{mainthm}}

We will in sequence prove the parts (1), (2) and (3):
\subsubsection*{\textbf{Part (1): the ALE ($a+b=2\lambda$)}} We use the construction of the basic TVS given in Section \ref{Cons basic A}; in particular recall the notation $A^n$ from this Section - here $n$ refers to the $n-$th layer of level lines in the construction. By conformal invariance, we may assume that we are working in $\H$ and that the first level line is started from $0$ and targeted to $\infty$.
	
	Let us start by showing that all loops of $\A{a}$ that also belong to $A^2$ are connected via a finite path in $G_s$. We differentiate two types of loops: those which contain a segment joining $\R^{-}$ to $\R^{+}$, and those which touch either only $\R^+$ or $\R^-$. Notice that the loops of the second type are at a distance 1 from some loop of the first type. Thus, it suffices to prove that the loops of the first type are connected via a finite path only using loops of the first type, i.e., loops that will also belong to $G_s$. However, this fact follows from the fact that the level line is a.s. non-self-crossing, continuous up to its target point, and attains its target point almost surely (\cite{WaWu}).
	
	Now, notice that the rest follows inductively: indeed, any loop of $\A{a}$ that was not present at $A^n$, but is present at $A^{n+1}$ is side-connected to a loop that appears at level $A^n$. Thus, as any loop of $\A{a}$ appears at $A^N$ for some finite random $N$, we conclude that $G_s$ is connected.
	
	It also follows from the construction that $G_p(A^n) = G_s(A^n)$ and that any loops that share a segment have different labels.

\subsubsection*{\textbf{Part (2): the connected phase ($2\lambda<a+b<4\lambda$)} \label{subcritical}}
First notice that throughout this subsection it is sufficient to work in the case when $a<2\lambda$ (otherwise we can consider $\Aa_{-b,a}$).

From the construction of $A^n$ (Section \ref{Construction A}), it follows that in this phase two loops do not share sides. Indeed, we first construct $\A{a}$ and then iterate TVS in loops with value $-a+2\lambda$; as no loop of any TVS shares a segment with the boundary, the claim follows.

To show that $\Aa_{-a,b}$ is point-connected in this regime, it suffices to prove two things:
\begin{itemize}
	\item All loops are point-connected to a loop with label $-a$ that touches the boundary.
	\item All loops labelled $-a$ that touch the boundary are point-connected between each other.
\end{itemize} 

\begin{claim} \label{con to boun}
	Let $2\lambda \leq a+b<4\lambda$. Then almost surely for every loop of $\Aa_{-a,b}$ there exists a path in $G_p$ connecting it to a loop that touches the boundary and has label $-a$. 
\end{claim}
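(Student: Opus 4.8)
The plan is to prove Claim~\ref{con to boun} by induction on the layers $A^n$ of the construction of $\Aa_{-a,b}$ given in Section~\ref{Construction A} (in the range $a<2\lambda$, $2\lambda\le a+b<4\lambda$, so that in the notation there $\Aa_{-a,b}$ is built from $\A{a}$ by iterating two-valued sets inside the components labelled $-a+2\lambda$). The base case is $\A{a}$ itself: every loop of $\A{a}$ touches the boundary by Remark~\ref{ALE}, and the loops labelled $-a$ that touch the boundary are exactly the ``$M$'' of Remark~\ref{ALE}; so I first need to check that \emph{every} loop of $\A{a}$ is point-connected in $\Aa_{-a,b}$ to a boundary-touching loop labelled $-a$. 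A loop $\ell$ of $\A{a}$ with label $-a$ already is such a loop, by Lemma~\ref{keepingloops}(2). For a loop $\ell$ of $\A{a}$ with label $-a+2\lambda$, I would use the construction in Section~\ref{Cons basic A}: $\ell$ arises from a level line excursion, and any such excursion is adjacent, along a boundary-to-boundary segment of $\partial D$, to a loop labelled $-a$ which is again a loop of $\A{a}$ touching the boundary — so $\ell$ is even side-connected inside $\A{a}$ to such a loop, and point-connected inside $\Aa_{-a,b}$.

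For the inductive step, suppose every loop of $A^n$ is point-connected (within $\Aa_{-a,b}$) to a boundary-touching loop of label $-a$, and pass to $A^{n+1}$. A new loop $\ell'$ of $A^{n+1}$ lies inside some component $O$ of $D\setminus A^n$ that carried the ``active'' label (in the first iteration, label $-a+2\lambda$; in later iterations, the intermediate label $m\lambda - a$ or $b-m\lambda$ as in Section~\ref{Construction A}), where we explored a further two-valued set $\Aa_{-\cdot,\cdot}(\Gamma^{A^n},O)$. The key point is that this sub-two-valued set again has the same connectivity shape — it is built from an ALE inside $O$ — so $\ell'$ is point-connected, inside $\bar O$, to a loop of $A^{n+1}$ whose boundary meets $\partial O$ along a nondegenerate arc; but $\partial O\subseteq A^n\cup\partial D$, and the part of $\partial O$ lying in $A^n$ is a union of loops of $A^n$. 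Hence $\ell'$ is point-connected to some loop of $A^n$ (or directly to $\partial D$, in which case it is already done via the base case applied inside $O$), and the induction hypothesis finishes the step. Since every loop of $\Aa_{-a,b}$ appears in some $A^N$ with $N$ a.s.\ finite, the claim follows for all loops.

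The place that needs genuine care — and which I expect to be the main obstacle — is showing that a new loop $\ell'$ living inside $O$ is point-connected to a loop of $A^n$ that forms part of $\partial O$, rather than merely accumulating on $\partial O$ without touching it. Concretely I must rule out that $\ell'$, or the chain of loops inside $O$ leading from $\ell'$ towards $\partial O$, only approaches $A^n$ in the limit. This is really a statement about the ALE inside $O$: the first level line of that ALE is started and targeted at two points of $\partial O$ and, being continuous up to and attaining its target point (Theorem~1.1.6 and the level-line results of \cite{WaWu}), it genuinely touches $\partial O$; its boundary-touching excursions bound loops of the right label that genuinely meet $\partial O$. When $\partial O$ consists partly of loops of $A^n$ and partly of $\partial D$, one has to argue that the level line touches the $A^n$-part, not only $\partial D$; this follows because each such $O$ has at least one boundary loop coming from $A^n$ on which the relevant boundary data forces the level line or one of its descendant loops to intersect it — and this is exactly the type of argument that was carried out in the base case and in the proof of Part~(1). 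So the technical heart is a careful bookkeeping of \emph{which} boundary arcs of $O$ the level lines of the inside-ALE must touch, combined with the continuity-up-to-target property of generalised level lines; the rest is a routine induction. I would also remark that this simultaneously re-proves, in this regime, the earlier observation that no two loops share a segment, since all sub-two-valued sets here are iterated strictly inside open components and never share a side with $\partial O$.
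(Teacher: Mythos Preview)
Your approach is essentially the paper's: build $\Aa_{-a,b}$ by iterating ALEs starting from $\A{a}$, and use that every loop of an ALE touches the boundary of its domain (Remark~\ref{ALE}). The paper's proof is a near one-liner from this: a loop first appearing at step $n$ sits inside some intermediate component $O$ at step $n-1$ and, being an ALE loop in $O$, touches $\partial O$; so by induction it is connected back to a boundary-touching loop.

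Your write-up, however, overcomplicates this and contains a couple of slips. First, the ``main obstacle'' you flag --- that $\ell'$ might merely accumulate on $\partial O$ without touching --- is a non-issue: Remark~\ref{ALE} says outright that every ALE loop meets the boundary of its domain, so no continuity-up-to-target argument is needed here. Second, your claim that $\ell'$ meets $\partial O$ ``along a nondegenerate arc'' is false for $n\ge 2$; this is exactly the statement that $E_s$ is empty in this regime. Point-touching is all one gets and all one needs. Third, and more seriously, your justification that the inside-ALE must hit the $A^n$-part of $\partial O$ rather than only $\partial D$ --- ``the relevant boundary data forces the level line\dots'' --- is wrong: the GFF $\Gamma^{A^n}|_O$ has zero boundary data everywhere on $\partial O$, so the ALE inside $O$ does not distinguish parts of $\partial O$. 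The correct observation is structural: since at each step the ALE has exactly one final label ($-a$ or $b$) and one intermediate label, every level-line segment on $\partial O$ separates $O$ from a loop of $\Aa_{-a,b}$; hence $\partial O\setminus\partial D$ lies entirely on final loops from earlier steps. Thus when $\ell'$ touches $\partial O$, it either touches $\partial D$ (and is itself boundary-touching) or touches an earlier loop of $\Aa_{-a,b}$, and the induction goes through.
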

\begin{proof}
	When $a+b=2\lambda$, we are in the case of an ALE, and thus all loops are point-connected to the boundary. So suppose $2\lambda < a+b<4\lambda$. To deal with this case, recall the very last construction of Section \ref{Construction A} in the concrete case we have $n_2 = 2$. Thus $\Aa_{-a,b}$ can be constructed by starting from $\A{a}$ and then iterating ALEs inside the loops that do not yet have value $-a$ or $b$. As all loops of any ALE touch the boundary, and we iterate at every step only in the loops that don't have value $-a$ or $b$, we see that any loop constructed at some finite step $n$ intersects a loop constructed at step $n-1$. Thus, any loop constructed at step $n$ is point-connected to a boundary-touching loop via a path of (side-length) $n$. As any loop is constructed at some finite random step $N$, the claim follows.
\end{proof}

We now show that any two boundary touching loops with label $-a$ are point-connected. Note that, thanks to Lemma \ref{keepingloops} all boundary-touching loops of $\Aa_{-a,b}$ with label $-a$ are also loops of $\A{a}$ which is point-connected. Thus, it suffices to prove that any two loops with label $-a$ intersecting the boundary and at a distance $2$ in $\A{a}$ are at a finite distance in $G_p(\Aa_{-a,b})$. 

But (as in the previous claim) $\Aa_{-a,b}$ can be constructed by first exploring $\A{a}$ and then further exploring $\Aa_{-2\lambda, b+a -2\lambda}$ inside the loops with label $-a + 2\lambda$. Moreover, any two loops of $\A{a}$ labelled $-a$, and that are at distance $2$ in $\A{a}$ are side-connected to a loop of label $-a + 2\lambda$ of $\A{a}$. In particular, to show that these two loops are at finite distance in $G_p(\Aa_{-a,b})$, it suffices to prove the following claim: 

\begin{claim}\label{btb}
	Let $2\lambda\leq a'+b' <4\lambda$. Then, for any two fixed intervals of the boundary, there is almost surely a point-connected path in $G_p(\Aa_{-a',b'})$ going from a loop touching one interval to another loop touching the other interval.
\end{claim}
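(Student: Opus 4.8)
The plan is to reduce Claim~\ref{btb} to a statement about a single generalized level line touching two prescribed boundary arcs, and then iterate. First I would reformulate: by conformal invariance, place the domain as $\H$, and let $I_1, I_2 \subset \R$ be the two fixed boundary intervals. Since $a'+b' < 4\lambda$, write $a' + b' = 2\lambda + r$ with $0 < r < 2\lambda$, and recall from Section~\ref{Construction A} (the $n_2 = 2$ case) that $\Aa_{-a',b'}$ is built by first exploring an ALE $\A{u}$ (for suitable $u \in [0,2\lambda)$) and then iterating ALEs inside the loops that do not yet carry label $-a'$ or $b'$. The key geometric input is that an ALE is the closed union of $(-u,-u+2\lambda)$-level lines (the lemma in Section~\ref{Cons union}), and that such level lines, being SLE$_4(\rho_1,\rho_2)$ with $\rho_i > -2$, touch the boundary on a set of positive Hausdorff dimension whenever the relevant exponent in Proposition~\ref{dimension bd} is positive — which happens precisely in the regime under consideration.

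The core step would be the following: show that almost surely there is a level line in the ALE that enters $I_1$ and also enters $I_2$, or at least a \emph{finite chain} of loops realizing this. Concretely, I would use the construction of $\A{u}$ from Section~\ref{Cons basic A}: pick boundary points $z_1, z_2$ so that $I_1, I_2$ lie on opposite counter-clockwise arcs determined by the first level line from $z_1$ to $z_2$. That first level line is an SLE$_4(\rho)$-type curve; by the boundary-touching result (Corollary~\ref{cor bd}, Proposition~\ref{dimension bd}) it touches $\R$ on a set of positive dimension on both sides, hence with probability one it has an excursion away from the boundary that meets $I_1$ and another that meets $I_2$. By the construction of the ALE, each such excursion lies on the boundary of a connected component labelled $-u$ (to one side) and $-u+2\lambda$ (to the other). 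These two label-$-u$ loops are then side-connected, through the level line itself, to the single label-$(-u+2\lambda)$ loop between them — so they are at $G_s$-distance two inside $\A{u}$, hence certainly at finite $G_p$-distance. If $u$ is not already the right level, one more layer of ALE iteration inside the $-u+2\lambda$ components (as in Claim~\ref{con to boun}) connects everything up, adding only a finite number of steps; here one uses that \emph{every} loop of an ALE touches the boundary (Remark~\ref{ALE}), so the newly created loops always intersect loops from the previous layer.

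The remaining ingredient is an induction on the ALE layers to handle the case where the two target arcs $I_1, I_2$ are not separated by the \emph{first} level line but only by some later one. Here I would argue exactly as in Part~(1) of the proof of Theorem~\ref{mainthm}: any loop of $\A{u}$ appears at some a.s.\ finite layer $A^N$, and a loop appearing at layer $A^{n+1}$ is side-connected to a loop appearing at layer $A^n$; combined with the base case this gives a finite $G_s$-path inside $\A{u}$ between \emph{any} two boundary-touching label-$(-u)$ loops, and in particular between one touching $I_1$ and one touching $I_2$. Passing from $\A{u}$ back to $\Aa_{-a',b'}$ costs only the finitely many extra layers of ALE iteration described above.

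The main obstacle I anticipate is the base case: proving that an SLE$_4(\rho_1,\rho_2)$ level line almost surely has an excursion off the boundary that meets a \emph{prescribed} interval $I_1$ (and simultaneously one meeting $I_2$). Positive Hausdorff dimension of the intersection $\eta \cap \R$ does not by itself localize the intersection to a given interval. To get this I would invoke the Markov/restriction property of the level line together with a zero-one type argument: the event that $\eta$ (or the completed ALE) has a boundary excursion inside $I_1$ is measurable with respect to the GFF restricted near $I_1$, has positive probability by a direct SLE estimate (conditioning the driving function to enter the capacity-image of $I_1$), and can be boosted to probability one by exhausting the interval with countably many overlapping subintervals and using independence/absolute continuity across disjoint boundary regions (Proposition~14 of \cite{ASW} / absolute continuity of the GFF). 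Handling the joint event for $I_1$ and $I_2$ then follows since the two arcs can be taken on opposite sides of the first level line, making the two excursion events essentially independent given that line.
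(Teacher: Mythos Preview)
Your reduction to the ALE has a genuine circularity. Grant your base case: you find $-a'$-labelled loops $\ell_1,\ell_2$ of $\A{a'}$ touching $I_1,I_2$, and a finite $G_s$-path $\ell_1=L_0,L_1,\dots,L_{2k}=\ell_2$ in $\A{a'}$ with the $L_{2j}$ labelled $-a'$ and the $L_{2j+1}$ labelled $-a'+2\lambda$. By Lemma~\ref{keepingloops} the $L_{2j}$ persist in $\Aa_{-a',b'}$. But the $L_{2j+1}$ do \emph{not}: inside each such loop one has to explore $\Aa_{-2\lambda,\,a'+b'-2\lambda}$, which shatters $L_{2j+1}$ into infinitely many loops of $\Aa_{-a',b'}$. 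To link $L_{2j}$ to $L_{2j+2}$ in $G_p(\Aa_{-a',b'})$ you must therefore produce, inside $L_{2j+1}$, a finite $G_p$-chain of $\Aa_{-2\lambda,\,a'+b'-2\lambda}$-loops from the boundary arc shared with $L_{2j}$ to the arc shared with $L_{2j+2}$. That is exactly Claim~\ref{btb} again, for a TVS with the same value of $a'+b'$ (indeed $2\lambda+(a'+b'-2\lambda)=a'+b'$) in a subdomain. ``One more layer of ALE iteration'' does not close this: it reproduces the same structure one level down, and the general construction of $\Aa_{-a',b'}$ in Section~\ref{Construction A} requires infinitely many ALE layers. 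Your appeal to Claim~\ref{con to boun} does not help either, since that claim only connects a loop to \emph{some} boundary-touching loop, not to one meeting a prescribed arc.

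The paper sidesteps this by building the connecting chain directly rather than via the ALE skeleton. Starting from $I$, it alternates between $(-a,-a+2\lambda)$ and $(b-2\lambda,b)$ level lines: at step $n$ one draws a level line in the component $O^n$ still containing $\tilde I$, creating (after a local completion) a genuine loop of $\Aa_{-a,b}$ with label $-a$ or $b$ that touches the previous loop along the level-line arc $I^n$. The crucial finiteness comes from a monotonicity argument: the extremal distance from $I^n$ to $\tilde I$ in $O^n$ is increasing, so the probability that the next level line hits $\tilde I$ is bounded below by a fixed $p>0$, and the procedure terminates after a geometric number of steps. This produces a chain of $\Aa_{-a,b}$-loops (not of ALE-loops that then need refinement) of finite length, which is what your inductive scheme fails to deliver.
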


For simplicity, assume that we work now in $\D$, we take again $a' = a$, $b' = b$ and fix two disjoint boundary arcs $I = [x,y]$ and $\tilde I = [\tilde x, \tilde y]$, where the arcs are taken in a counter-clockwise sense. 

\begin{proof}[Proof of the claim]
 The proof is again based on choosing a particular way of constructing $\Aa_{-a,b}$: we first construct a loop that touches $I$ and then, iteratively, at each step build a loop in the connected component containing $\tilde I$, that touches the loop of the previous step. We continue until a loop touches $\tilde I$ (see Figures \ref{Boundary connection 1} and \ref{Boundary connection 2}).
	
In this respect, consider $A^1:=\eta^1([0,\infty])$, where $\eta^1$ is a $(-a, -a +2\lambda)$ level line of from $y$ to $x$ and denote $I_0 = I$. Either
\begin{itemize}
\item $\eta^1$ hits $\tilde I$: then from the construction of $\Aa_{-a,b}$ in Section \ref{BPTVE} we see that a part of $\eta^1$ is contained in the boundary of a loop of $\Aa_{-a,b}$ labelled $-a$ intersecting both $I$ and $\tilde I$. More precisely, this part is given by $\eta^1([\tau^-,\tau^+])$, were $\tau^+$ is the first time where $\eta$ hits $\tilde I$ and $\tau^-$ is the largest time before $\tau^+$ where $\eta$ hits $I$.
\item $\eta^1$ disconnects $I$ from $\tilde I$ before hitting $\tilde I$: in this case, $\tilde I$ belongs to $\partial O$, where $O$ is a connected component of $D\backslash \eta^1$. By the fact that $\eta^1$ is non-self-crossing and ends at $y$, we know that $h_{\eta^1}$ restricted to $O$ has boundary condition $-a+2\lambda$ on $\eta^1\cap \partial O$. On the other hand, the segment $\eta^1\cap \partial O$ is also part of the boundary of another connected component $\tilde O$ of $D\backslash \eta^1$, that contains a part of $I$. Moreover, $h_{\eta^1}$ restricted to $\tilde O$ has boundary condition $-a$ on $\eta^1\cap \partial \tilde O$. By exploring additional $-a, -a+2\lambda$ level line in $\tilde O$, as in the construction of Section \ref{Cons basic A}, we can finish the construction of all loops with label $-a$ that intersect $I$ and $\partial O$ (see the dashed lines in the middle of Figure \ref{Boundary connection 1}).
\end{itemize}

In the latter case we continue the construction of a sequence of loops joining $I$ to $\tilde I$ recursively: if $A^n$ does not touch $\tilde I$, we continue the construction of $\Aa_{-a,b}$ in $O^n$, the connected component of $D\backslash A^n$ that contains $\tilde I$. We now take $I^n$ to be the subset $\partial O^n\cap A^n$. Notice that the extremal distance of $I^n$ to $\tilde I$ in $D \backslash A^n$ is larger than the extremal distance of $I^{n-1}$ to $\tilde I$ in $D \backslash A^{n-1}$. Denote $I^n = [x^n, y^n]$ in a counter-clockwise sense and depending on the parity of $n$, continue as follows (see Figures \ref{Boundary connection 1} and \ref{Boundary connection 2}):

	\begin{itemize}
		\item If $n$ is odd, $h_{A^n}$ is equal to $-a+2\lambda$ on $I^n$ and zero on $\partial O^n\backslash A^n$. Hence, as $b-2\lambda<-a+2\lambda<b$, from Lemma \ref{lemext} it follows that we can explore a $(b-2\lambda, b)$-level line $\eta^{n+1}$  of $\Gamma^{A^n}+h_{A^n}$ restricted to $O^n$ from $x^n$ to $y^n$. We now have two scenarios as above: either the level line intersects $\tilde I$, then we stop as above and observe that there is now a loop with label $b$ that joins $\tilde I$ and $I^n$. Otherwise we set	$A^{n+1}:=A^n\cup \eta^{n+1}([0,\infty])$ and note that the boundary values of $h_{A^{n+1}}$, restricted to closed connected component of $D\backslash A^{n+1}$ containing $\tilde I$, are equal to $b-2\lambda$ on the level line $\eta^{n+1} ([0,\infty])$. Moreover, on the other side of this level line segment we can, using additional level lines, again finish all loops labelled $b$ that intersect $I^n$ and $I^{n+1}$.
		\item If $n$ is even, $h_{A^n}$ restricted to $O$ has boundary values constant equal to $b-2\lambda$ on $I_n$ and zero elsewhere on $\partial O^n\backslash A^n$. Thus, it is possible to construct a $(-a, -a + 2\lambda)$ level line $\eta^{n+1}$ of $\Gamma^{A^n}$ from $y^n$ to $x^n$. Again, if $\eta^{n+1}$ intersects $\tilde I$ we stop. If not, we observe that the boundary values of $h_{A^{n+1}}$, restricted to closed connected component of $D\backslash A^{n+1}$ containing $\tilde I$, are equal to $-a+2\lambda$ in $\eta^n$. Moreover, on the other side of this level line segment we can, using additional level lines, again finish all loops labelled $-a$ that intersect $I^n$ and $I^{n+1}$.
	\end{itemize}
\begin{figure}[h!]
	\includegraphics[width=0.3\textwidth]{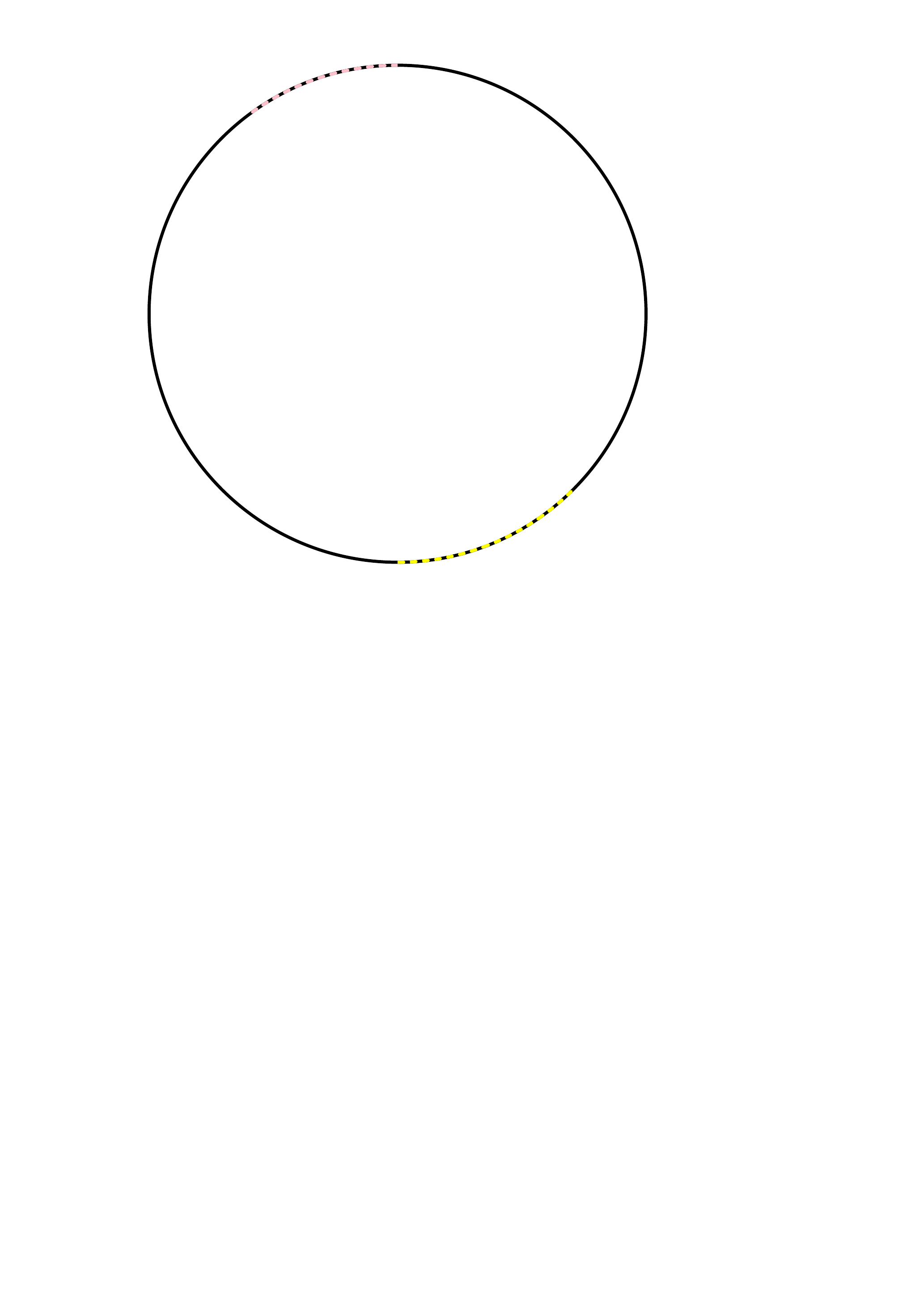}
	\includegraphics[width=0.3\textwidth]{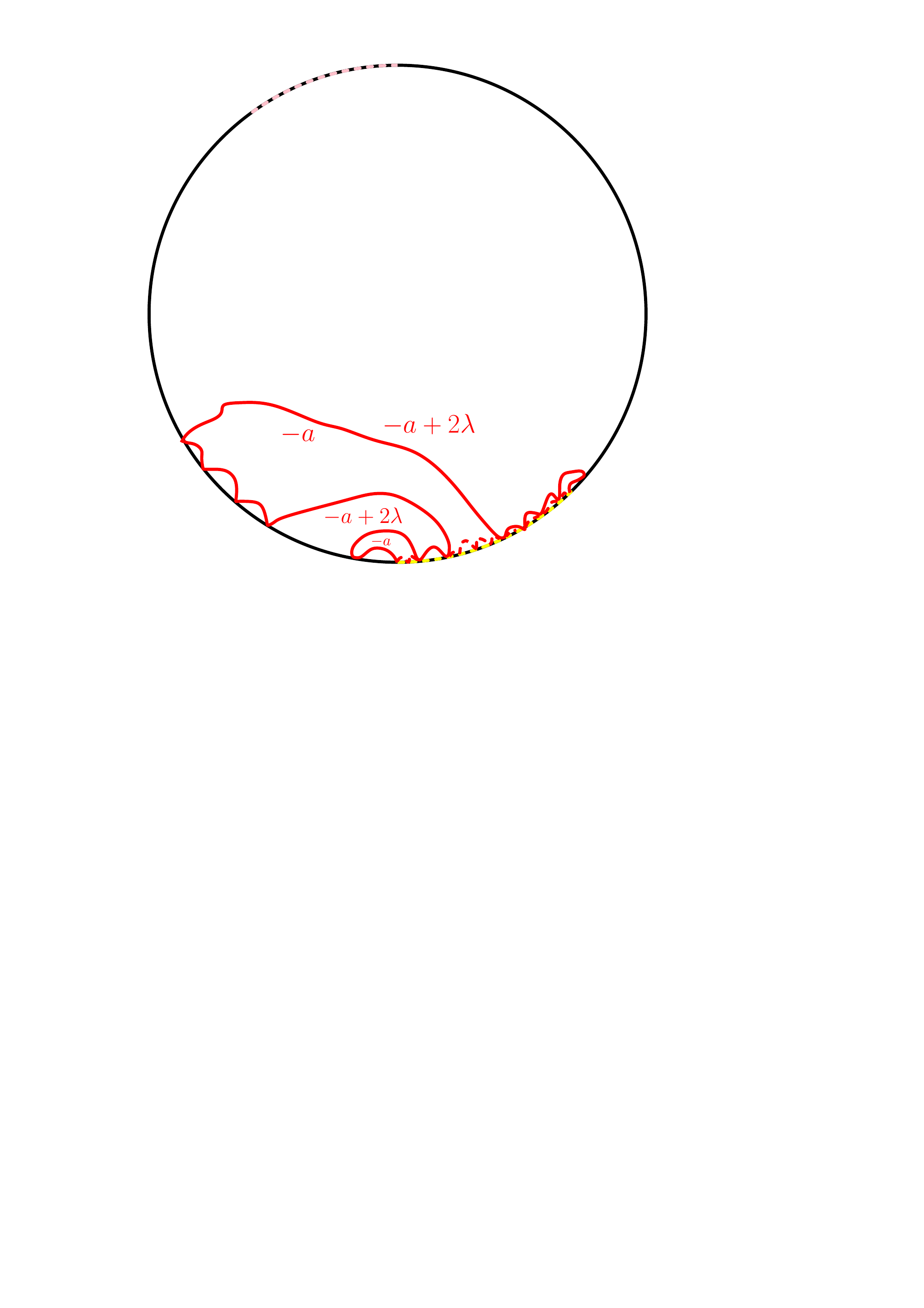}
	\includegraphics[width=0.3\textwidth]{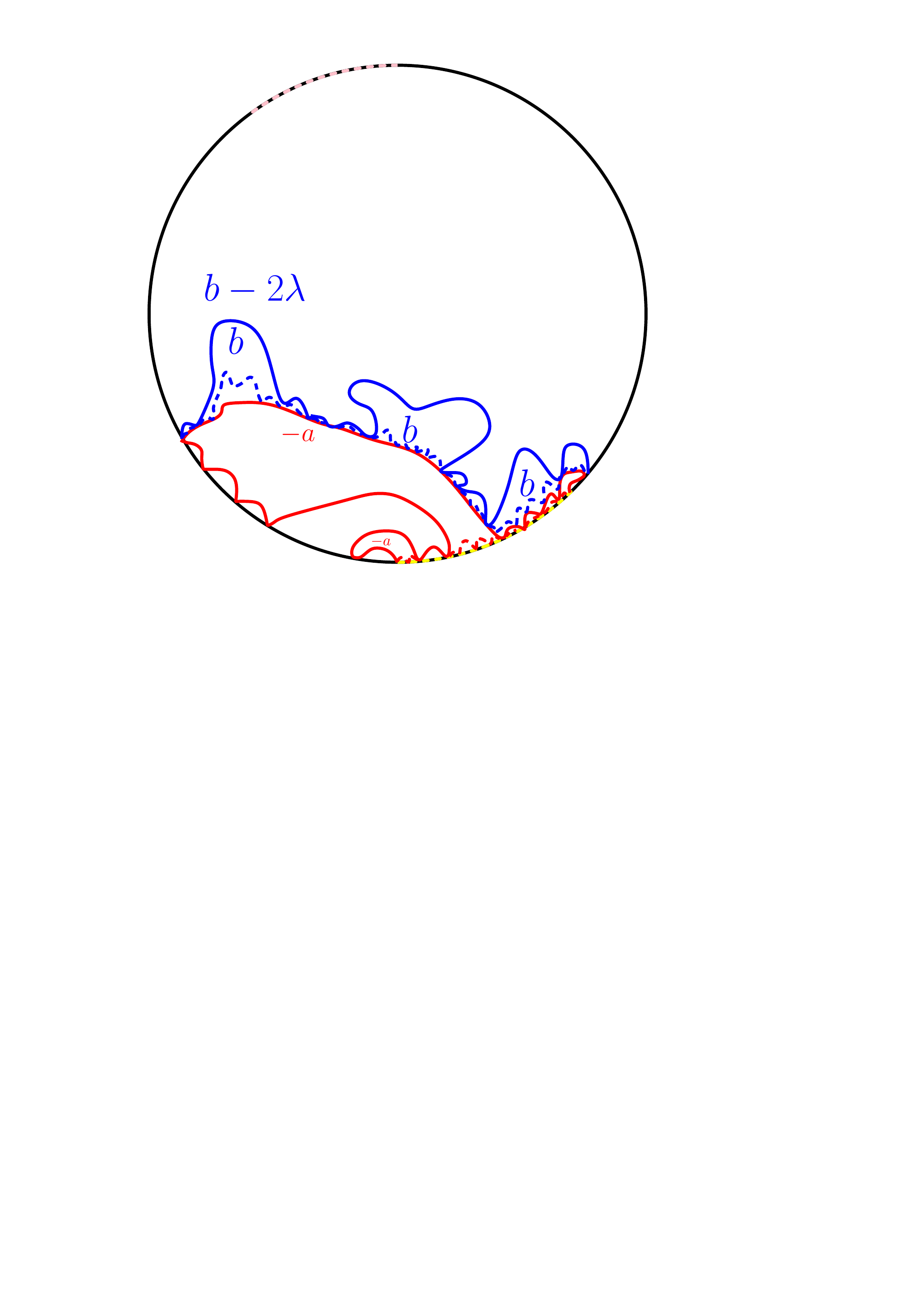}
	\caption{Scheme of the first two steps of the proof. We want to connect the bottom dashed segment with the upper dashed segment. In the Figure $a<2\lambda$ and $b\geq 2\lambda$.}
	\label{Boundary connection 1}
\end{figure}

We claim that this procedure stops at a finite (random) time $N$ almost surely. Indeed, by using conformal invariance of generalised level lines, we can map $O^n$ to the unit disk via $\phi^n$ such that $I^n$ maps to a fixed interval. As the extremal distance between $I^n$ and $\tilde I$ is decreasing, the $\phi^n(\tilde I)$ is increasing in length. As on the other hand the boundary conditions are equal on $I^n$ on even and odd steps separately, and equal to zero elsewhere on $\partial O^n$, we conclude that the probability of hitting $\tilde I$ before finishing at $y$ is increasing separately in even and odd steps (for $n \geq 3$). As this probability is non-zero to begin with (as SLE$_4(\rho_1, \rho_2)$ process hits any interval that it potentially could hit with positive probability), we see that $N$ is stochastically dominated by a geometric random variable of positive parameter $p$. 
		
As each $I^n$ is joined to $I$ via a path of $n$ point-connected loops of labels $-a$ or $b$, it just remains to see that we can finish the construction of $\Aa_{-a,b}$ (i.e. that these loops we constructed indeed are loops of $\Aa_{-a,b}$). Notice that after having finished the local set in the construction above, it remains to construct $\Aa_{-a,b}$ in simply connected components with boundary conditions whose values are piece-wise constant in $[-a,b]$ and change at most twice. Thus, the claim follows from Remark \ref{rem: 2 boundary values}.

\begin{figure}[h!]
	\includegraphics[width=0.3\textwidth]{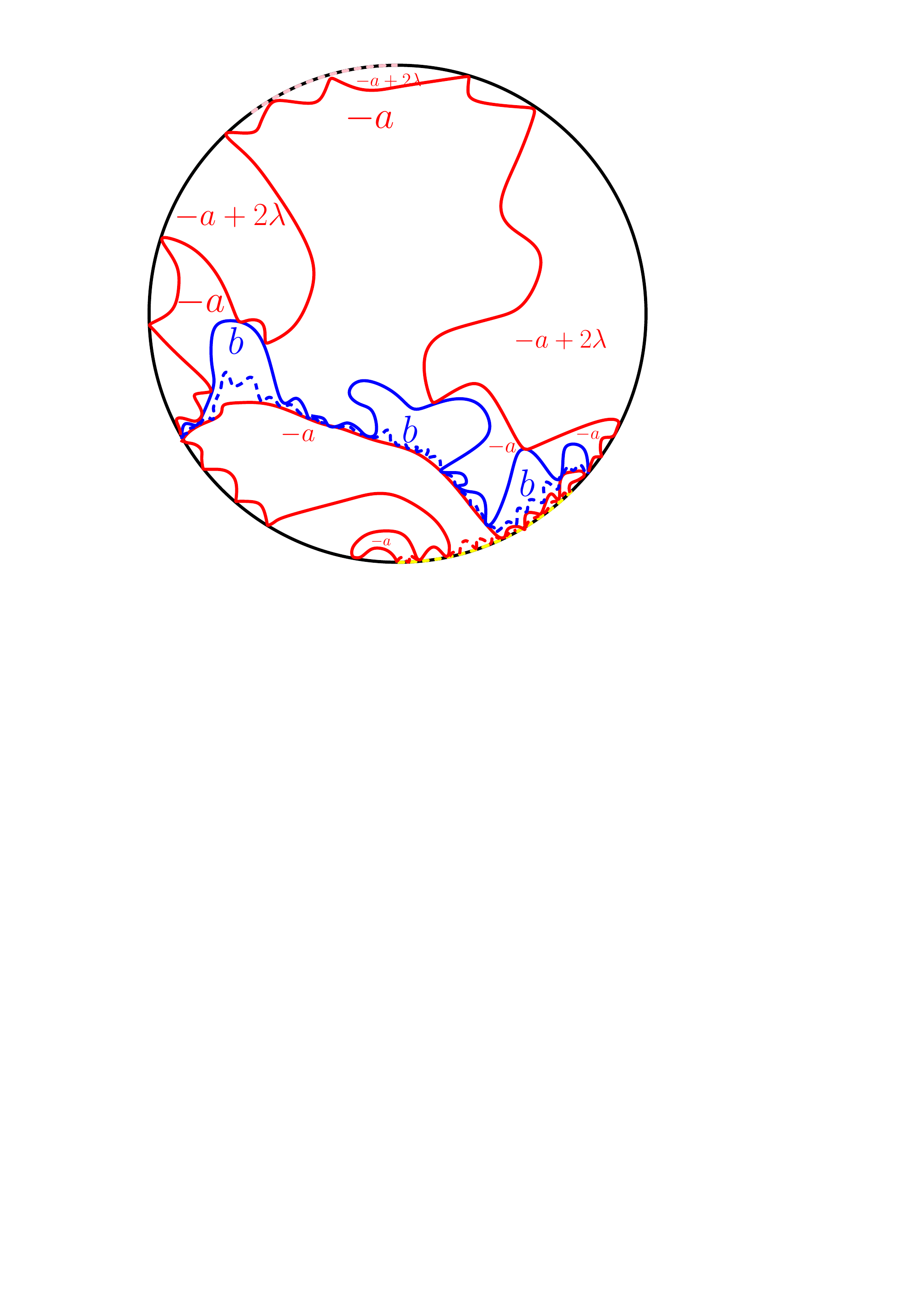}
	\includegraphics[width=0.3\textwidth]{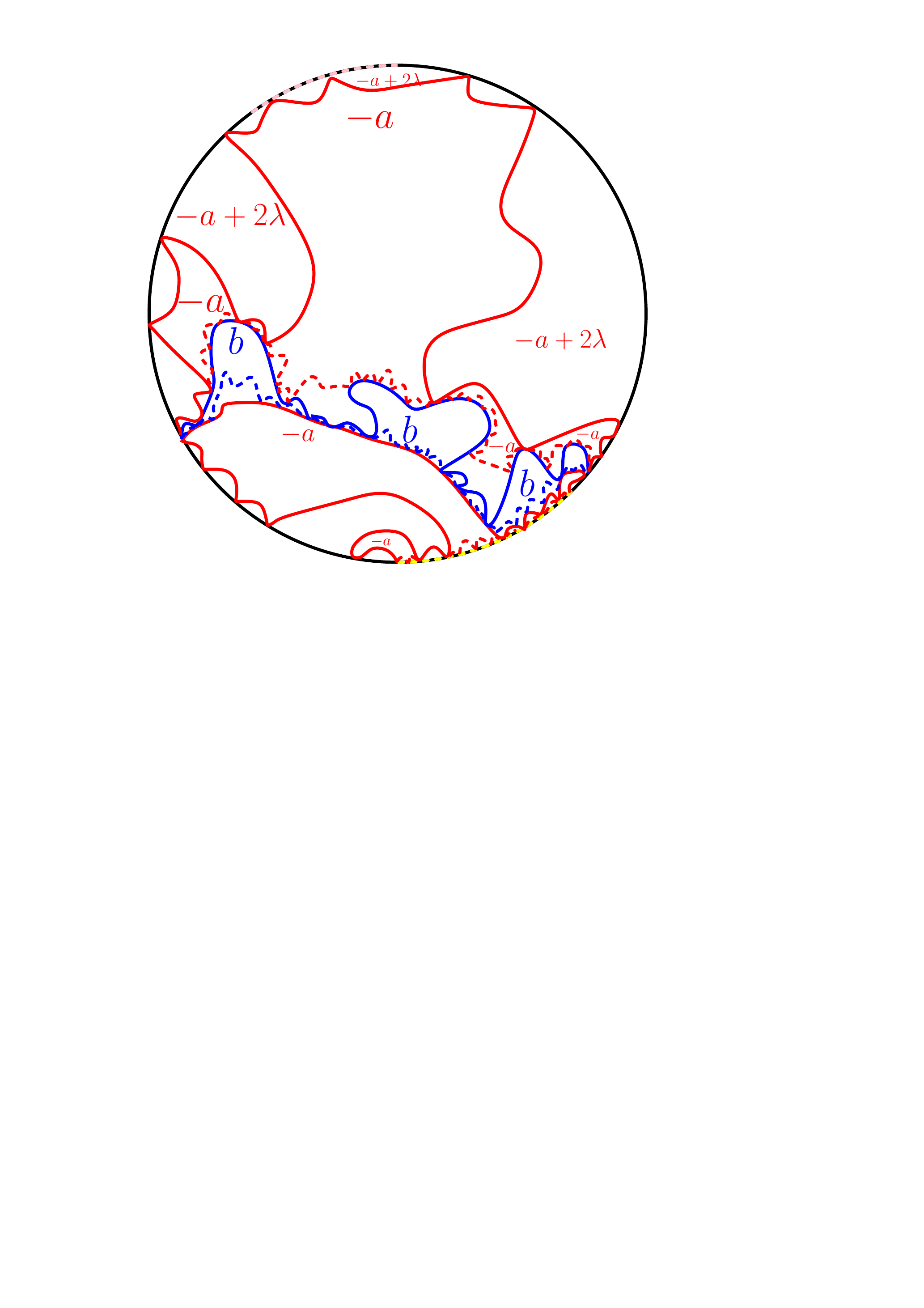}
	\includegraphics[width=0.3\textwidth]{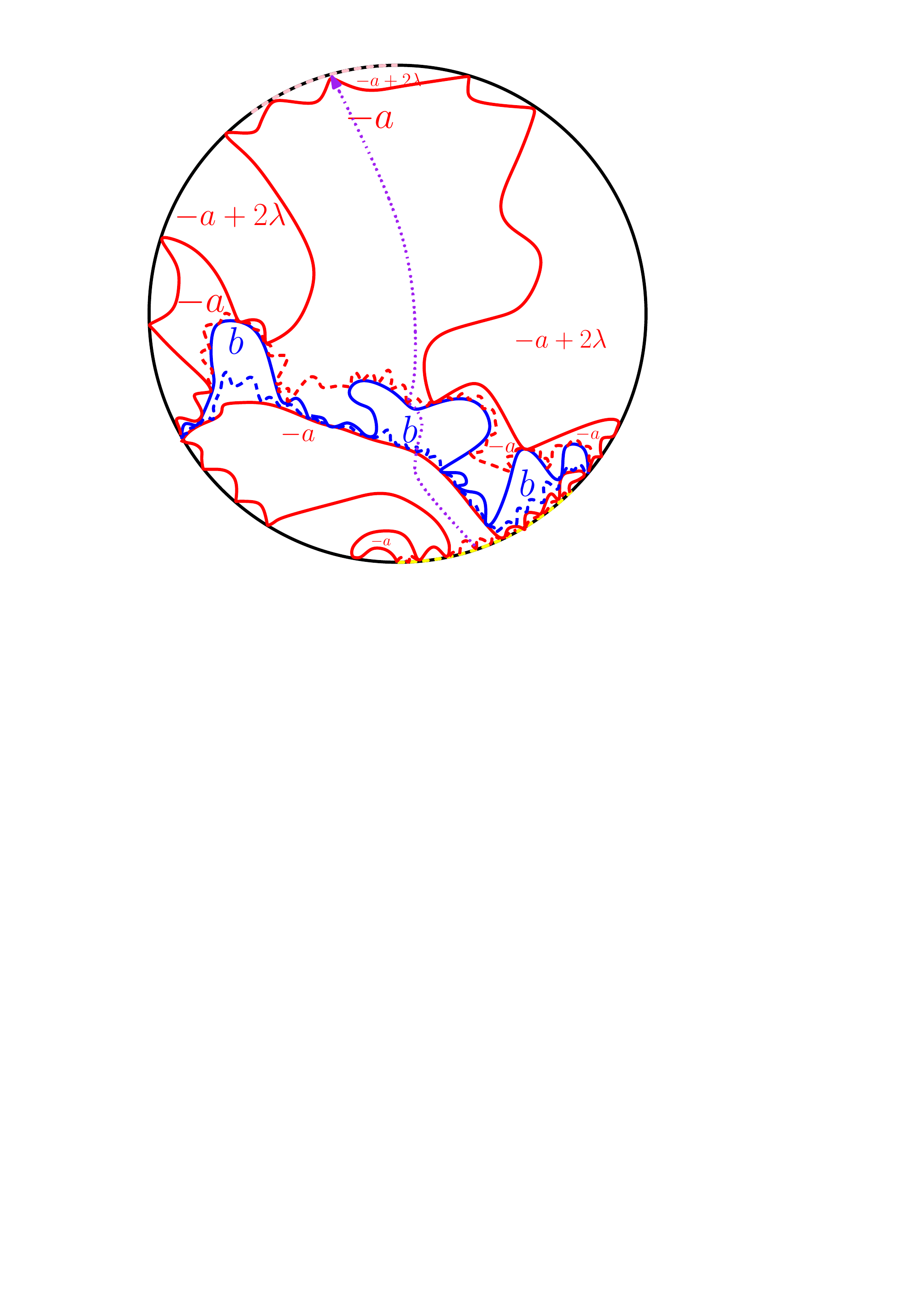}
	\caption{Scheme of the last steps of the proof. In the first image we show the first time a level line hit the targeted interval. In the second one we show how to complete the loops whose boundaries are the explored level lines. In the third image we show the path of loops from one segment to the other.}
	\label{Boundary connection 2}
\end{figure}
		\end{proof}

\subsubsection*{\textbf{Part (3): the disconnected case ($a+b\geq 4\lambda$)}} In the case of $a=b=2\lambda$, the claim follows from the fact that $\Aa_{-2\lambda,2\lambda}$ has the law of a CLE$_4$. Now consider $\Aa_{-a,-a+4\lambda}$ with $a<2\lambda$. We can construct it by first exploring $\A{a}$ and then inside connected components of $D\backslash \A{a}$ with the label $-a+2\lambda$ further exploring $\Aa_{-2\lambda,2\lambda} (\Gamma^{\A{a}},O)$ - the closed union of the explored sets gives precisely $A_{-a,-a+4\lambda}$. 
	
	We know that loops with the label $-a$ that also belong to $\A{a}$ do not touch each other. But all other loops come from exploring $\Aa_{-2\lambda,2\lambda}(\Gamma^{\A{a}},O)$ in the second step. These loops do not touch each other, nor the boundary of $O$, i.e. the loops with label $-a$.
	
	For general $a+b\geq 4\lambda$, we conclude from the previous case and the montonicity: any loop of $\Aa_{-a,b}$ is contained in the interior of some loop of $\Aa_{-a',-a'+4\lambda}$ for some $0<a'<4\lambda$. 
	
	Similarly, we can draw the following Corollary:
	
	\begin{cor}\label{inloops}
	Suppose $a \geq 2\lambda$. Then the loops of $\Aa_{-a,b}$ with the label $-a$ are pairwise disjoint. Similarly if $b \geq 2\lambda$, then all loops with the label $b$ are pairwise disjoint.
	\end{cor}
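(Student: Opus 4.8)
The plan is to deduce this directly from part (3) of Theorem~\ref{mainthm} together with Lemma~\ref{keepingloops}; no new argument is needed beyond a little bookkeeping to handle the regime where $a+b$ may fall below $4\lambda$. Using the symmetry $\Aa_{-a,b}(\Gamma)=\Aa_{-b,a}(-\Gamma)$, which interchanges the two labels, it suffices to prove the first assertion: assuming $a\geq 2\lambda$, the loops of $\Aa_{-a,b}$ with label $-a$ are pairwise disjoint.

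First I would treat the easy case $b\geq 2\lambda$. Then $a+b\geq 4\lambda$, so part (3) of Theorem~\ref{mainthm} gives that \emph{all} loops of $\Aa_{-a,b}$ are pairwise disjoint, in particular those labelled $-a$. For the remaining case $b<2\lambda$, I would set $\delta:=2\lambda-b>0$. Since $a\geq 2\lambda$ we have $a+b\geq 2\lambda$, so part (1) of Lemma~\ref{keepingloops} applies and shows that every loop of $\Aa_{-a,b}$ with label $-a$ is also a loop of $\Aa_{-a,2\lambda}$ with label $-a$. Now $a+2\lambda\geq 4\lambda$, so part (3) of Theorem~\ref{mainthm} shows that all loops of $\Aa_{-a,2\lambda}$ are pairwise disjoint; hence the loops of $\Aa_{-a,b}$ labelled $-a$, forming a subfamily of these, are pairwise disjoint as well. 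This finishes the proof, and the case $b\geq 2\lambda$ of the corollary follows by the symmetry mentioned above.

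The only subtlety is that, when $a\geq 2\lambda$ but $b<2\lambda$, the sum $a+b$ can lie strictly between $2\lambda$ and $4\lambda$, so one cannot apply part (3) of Theorem~\ref{mainthm} to $\Aa_{-a,b}$ directly; the role of Lemma~\ref{keepingloops}(1) is exactly to move the label-$(-a)$ loops unchanged into a two-valued set whose two parameters sum to at least $4\lambda$. I expect no real obstacle here, since this is the same mechanism already exploited at the end of the proof of part (3) of Theorem~\ref{mainthm}.
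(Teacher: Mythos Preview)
Your proof is correct and follows essentially the same approach as the paper: both use Lemma~\ref{keepingloops}(1) to embed the label-$(-a)$ loops of $\Aa_{-a,b}$ unchanged into a larger two-valued set whose parameters sum to at least $4\lambda$, and then apply part~(3) of Theorem~\ref{mainthm}. The only cosmetic difference is that the paper avoids your case split by taking $\delta=2\lambda$ uniformly (passing to $\Aa_{-a,b+2\lambda}$, where $a+(b+2\lambda)\geq 4\lambda$ follows already from $a+b\geq 2\lambda$), whereas you set $\delta=2\lambda-b$ when $b<2\lambda$.
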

	
	\begin{proof}
	From Lemma \ref{keepingloops}, we know that loops with label $-a$ in $\Aa_{-a,b}$ remain also loops with label $-a$ of $\Aa_{-a,b+2\lambda}$. But we know that any two loops of the latter are disjoint. 	
	\end{proof}

\subsubsection*{\textbf{Part (4): loops with the same label do not touch.}} As this is trivially true in the case $a+b \geq 4\lambda$, we suppose $a +b < 4\lambda$. 

First, note that for the ALE ($a+b = 2\lambda$) this follows from the construction using level lines. Indeed, the labels on the two sides of a level line are different, so two loops could only touch at the endpoints of SLE$_4(\rho_1, \rho_2)$ excursions constructing them. However, any two SLE$_4(\rho_1, \rho_2)$ excursions away from the boundary are disjoint as they correspond to excursions of Bessel-type of processes with dimension strictly less than $2$. Moreover, for the same reason, none of the excursions also touches the starting point nor the endpoint of the process.

Suppose now that $2\lambda < a+b < 4\lambda$ and assume WLoG that $a < 2\lambda$. Let us first show that no two loops with label $-a$ touch each other. As in the proof of Lemma \ref{keepingloops}, we can construct $\Aa_{-a,b}$ by:
\begin{enumerate}[(1)]
	\item Exploring $\Aa_{-a,-a+2\lambda}$.
	\item Exploring $\Aa_{-2\lambda, b+a-2\lambda}$ inside the connected components of $D\backslash \Aa_{-a,-a+2\lambda}$ with the label $-a+2\lambda$.
\end{enumerate}
 Now, we know that no two loops labelled $-a$ from step (1) can touch each other by the previous paragraph. Also, by Lemma \ref{keepingloops} no loop labelled$-a$ constructed in step (2) touches the loops with the label $-a$ of step (1). Finally, by Corollary \ref{inloops} loops labelled $-a$ constructed in step (2) are also pairwise disjoint.

Consider now the loops with label $b$. If $b < 2\lambda$, then we can argue as just above. If $b \geq 2\lambda$, two loops with label $b$ do not touch each other by Corollary \ref{inloops}.

\begin{rem}\label{rem: gen 4.1} In Remark \ref{rem: 2 boundary values} we discussed TVS with piece-wise constant boundary condition that changes in two points. It is not hard to convince oneself that Theorem \ref{mainthm} also holds in this setting. In fact, with minor modifications in the proof, one can also prove it in the more general setting of a GFF with a piecewise constant boundary condition changing finitely many times, a setting used in \cite{ALS1}.
\end{rem}

\subsection{A corollary: the SLE$_4$ fan}
Before presenting the main consequences of Theorem \ref{mainthm} let us discuss a simple consequence about the SLE$_4$ fan. The SLE$_4$ fan, introduced for other values of $\kappa$ in \cite{MS1}, is roughly the union of all possible $(-a,-a+2\lambda)$ level lines with $a \in (0,2\lambda)$ going from $x$ to $y$ coupled with the same underlying GFF. To make this precise one takes the closed union over any dense countable subset of $(0,2\lambda)$:

\begin{defn}[SLE$_4$ fan]
Let $\Gamma$ be a GFF and fix $x,y \in \partial \D$. Then the SLE$_4$ fan is defined as
	\[	
	\mathbf F(x,y):=\overline{\bigcup_{a\in (0, 2\lambda) \cap \Q} \eta^{a,x,y}}, \]
	where $\eta^{a,x,y}$ is the $(-a,-a+2\lambda)$-level line of $\Gamma $ going from $x$ to $y$. 
\end{defn}

It can be seen that the resulting set does not depend on the underlying choice of the countable dense set of points in the following sense: for any two such choices, the corresponding SLE$_4$ fan is the same. Indeed, this just follows from the monotonicity of the level lines, Lemma 7.2 in \cite{PW}: for any fixed $a$ and $a_n \searrow a$, we have that a.s. the $(-a_n,-a_n + 2\lambda)$ level lines converge to the $(-a,-a+2\lambda)$ level line in the Hausdorff topology. This monotonicity allows to define $a\mapsto \eta^{a,x,y}$ over the whole parameter interval $(0,2\lambda)$. However, this mapping cannot be continuous, as can for example be seen from the fact that the whole union of $(-a,-a+2\lambda)$ level lines over $a \in (0,2\lambda)$ is contained in $\Aa_{-2\lambda, 2\lambda}$. Thus the SLE$_4$ fan, denoted $\mathbf F(x,y)$, is a fractal set whose complement consists of simply-connected open sets. Again, it is natural to ask whether the connected components of the complement are point-connected in the same sense as above. 

\begin{cor} \label{Cor fan}
Let $\Gamma$ be a GFF and fix $x,y \in \partial \D$.	The graph $G_p(\mathbf F(x,y))$ defined as before is connected.
\end{cor}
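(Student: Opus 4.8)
\textbf{Proof plan for Corollary \ref{Cor fan}.}

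The strategy is to realize the SLE$_4$ fan $\mathbf F(x,y)$ as a ``limit'' of two-valued sets to which Theorem \ref{mainthm} applies, and then to transfer the point-connectivity of those TVS to the fan. First I would observe that each level line $\eta^{a,x,y}$ with $a\in(0,2\lambda)$ is contained in $\Aa_{-a,-a+2\lambda}=\A{a}$ by Lemma \ref{donotenter}, and in fact the union over a dense countable set of $a$'s is a subset of $\Aa_{-2\lambda,2\lambda}$ (as noted in the text just before the statement). More usefully, for a single fixed $a$ the level line $\eta^{a,x,y}$ is on the boundary of a loop of $\A{a}$ labelled $-a$ and a loop labelled $-a+2\lambda$; as $a$ ranges over a dense set, these level lines should exhaust, in the limit, exactly the part of $\Aa_{-2\lambda,2\lambda}$ that ``separates $x$ from $y$'' in the appropriate sense. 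The cleanest route is therefore to compare $\mathbf F(x,y)$ with a specific TVS in the phase $2\lambda<a+b<4\lambda$ — say $\Aa_{-a,b}$ with $a+b$ slightly above $2\lambda$ — whose loops are all point-connected by Theorem \ref{mainthm}(2), and to argue that every connected component of $D\setminus \mathbf F(x,y)$ contains a component of $D\setminus\Aa_{-a,b}$, or more directly, that the loops of $\mathbf F(x,y)$ inherit a point-connected structure from an explicit construction.

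Concretely, the plan is to mimic the proof of Claim \ref{btb} / the ALE case of Theorem \ref{mainthm}(1). Recall from Section \ref{Cons union} that $\A{a}=\overline{\bigcup_{i,j}\eta^{a,x_i,y_i}}$ over a dense set of boundary points. The fan uses only the two fixed endpoints $x,y$ but a dense set of levels $a$. I would fix a countable dense $\{a_k\}\subset(0,2\lambda)$ and explore the level lines $\eta^{a_k,x,y}$ in some order, coupled with the same GFF. The key geometric input is: (i) each $\eta^{a_k,x,y}$ is a continuous simple curve from $x$ to $y$, hence its complement's components are ``stacked'' and consecutive ones touch along the curve; (ii) by the monotonicity of level lines (Lemma 7.2 of \cite{PW}, cited in the text), $a\mapsto\eta^{a,x,y}$ is monotone, so $\eta^{a_k,x,y}$ and $\eta^{a_{k'},x,y}$ are nested/ordered curves between $x$ and $y$, and a component of $D\setminus\mathbf F(x,y)$ is sandwiched between two ``consecutive'' such curves in the limit. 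From (i) a component between two consecutive level lines is point-connected along each bounding curve; from (ii) passing from the region between $\eta^{a_k}$ and $\eta^{a_{k+1}}$ to the next region happens across a shared curve, so the intersection graph stays connected as we refine. Since every component of $D\setminus\mathbf F(x,y)$ appears after finitely many refinement steps (because $\mathbf F(x,y)$ is the closure of the countable union and is locally finite, being contained in $\Aa_{-2\lambda,2\lambda}$), induction on the refinement step gives connectivity of $G_p(\mathbf F(x,y))$, exactly as in the ALE argument.

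The main obstacle I anticipate is making precise the statement that ``consecutive'' level lines $\eta^{a_k,x,y}$ bound the components of the fan and that the transition between successive refinement regions is point-connected rather than merely topologically separated — in other words, controlling what happens in the limit $a_{k}\to a$ where the map $a\mapsto\eta^{a,x,y}$ is discontinuous. One has to rule out, or rather correctly account for, the situation where the fan ``jumps'' and a component is pinched off without a point of contact to the previously constructed loops. Here I would lean on the local finiteness of $\Aa_{-2\lambda,2\lambda}$ (Proposition \ref{BPCLE}(1)) together with the fact that $\mathbf F(x,y)\subseteq \Aa_{-2\lambda,2\lambda}$: any component of $D\setminus\mathbf F(x,y)$ of diameter $\geq\eps$ already contains (or equals) a component of $D\setminus \Aa_{-a_k,-a_k+2\lambda}$-type loops for large enough $k$, and the boundary-touching/point-connection structure established in the proof of Theorem \ref{mainthm} for these loops transfers. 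An alternative, possibly cleaner, way to close this gap is to note that $\mathbf F(x,y)$ together with the two ``trivial'' components containing the arcs $\partial D\setminus\{x,y\}$ can be viewed through the lens of Remark \ref{rem: gen 4.1}: it is the limit of TVS with two-point boundary conditions concentrated near $x$ and $y$, and Theorem \ref{mainthm}(2) applies verbatim in that generalized setting, so the connectivity is inherited in the Hausdorff limit via Lemma \ref{BPLS}(3). I would present the argument along the first (direct, construction-based) line, since it parallels the rest of the section, and invoke the generalized Theorem \ref{mainthm} only as a remark.
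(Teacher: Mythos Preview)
Your refinement-by-level-lines approach has a real gap precisely at the point you flag: the map $a\mapsto \eta^{a,x,y}$ is discontinuous, so a loop of $\mathbf F(x,y)$ need \emph{not} appear after finitely many level lines have been drawn. A component of $\D\setminus\mathbf F(x,y)$ is not in general bounded by two ``consecutive'' level lines from your dense set; its boundary can be a limit of level lines from one side that is not itself any $\eta^{a_k,x,y}$. In that situation the ALE-style induction breaks down: you never reach the loop at a finite stage, and you have no mechanism to guarantee the limiting loop touches one you have already connected. Your proposed fix via local finiteness of $\Aa_{-2\lambda,2\lambda}$ does not close this, since containment of $\mathbf F(x,y)$ in a locally finite set says nothing about whether a given fan-loop equals some $\A{a_k}$-loop or only arises in the closure. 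Likewise, your alternative of passing point-connectivity through a Hausdorff limit via Lemma~\ref{BPLS}(3) is not justified: point-connectivity of $G_p$ is not a closed condition under Hausdorff convergence.

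The paper avoids this obstacle by a different reduction. It truncates to $\mathbf F^\epsilon(x,y)=\overline{\bigcup_{a\in(\epsilon,2\lambda)\cap\Q}\eta^{a,x,y}}$, then cuts the disk by the single rightmost level line $\eta^{\epsilon,x,y}$. In each component $O$ of $\D\setminus\eta^{\epsilon,x,y}$, it shows $\mathbf F^\epsilon(x,y)\cap O$ is contained in the two-valued set $\Aa^{u_\epsilon}_{-2\lambda,2\lambda-\epsilon}$ of Remark~\ref{rem: 2 boundary values}, whose loop graph is point-connected by Theorem~\ref{mainthm}(2) (and Remark~\ref{rem: gen 4.1}). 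Since any loop of $\mathbf F^\epsilon$ surrounds some loop of this TVS, the fan-loops inside $O$ are point-connected. The passage to $\epsilon=0$ is then handled not by a limit of sets, but by Claim~\ref{claim:: no boundary}: for any two \emph{fixed} loops of $\mathbf F(x,y)$ there is a random $\epsilon>0$ such that both are already loops of $\mathbf F^\epsilon(x,y)$ and lie in the same component of $\D\setminus\eta^{\epsilon,x,y}$. So the missing idea in your plan is this two-step reduction---cut by one level line, embed the truncated fan in a genuine TVS inside each piece---rather than trying to assemble the fan directly from its constituent curves.
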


Let us first see that no connected component of the complement of the SLE$_4$ fan shares a boundary segment with the boundary of the domain:

\begin{claim}\label{claim:: no boundary}
	Let $\Gamma$ be a GFF and fix $x,y \in \partial \D$. Then a.s. no connected component of the complement of the SLE$_4$ fan $\mathbf F(x,y)$ contains a boundary arc of $\partial \D$.
\end{claim}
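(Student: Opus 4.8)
The plan is to reduce the statement to a fact about a \emph{single} level line, and then to chain level lines with rational parameters densely through the boundary. Suppose for contradiction that with positive probability some connected component $O$ of $\D \setminus \mathbf F(x,y)$ contains a nontrivial boundary arc $J \subseteq \partial \D$. Since $\mathbf F(x,y)$ is the closure of a countable union of level lines $\eta^{a,x,y}$ with $a \in (0,2\lambda)\cap\Q$, the component $O$ is, for each such $a$, contained in a single connected component of $\D \setminus \eta^{a,x,y}$: indeed $\eta^{a,x,y} \subseteq \mathbf F(x,y)$, so $O$ is disjoint from $\eta^{a,x,y}$. Hence for every rational $a$ the whole arc $J$ lies on the boundary of one complementary component of $\eta^{a,x,y}$; in particular $\eta^{a,x,y}$ does not enter a neighbourhood of (most of) $J$ inside $\D$. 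So it suffices to show that, for a fixed subarc $J' \subseteq J$ (say chosen with rational-ish endpoints so that it is a deterministic arc) and a suitable fixed rational $a$, the level line $\eta^{a,x,y}$ accumulates on $J'$ with probability one, contradicting the above.

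The first step is therefore to establish a \textbf{single-curve input}: for a generalised $(-a,-a+2\lambda)$-level line $\eta$ from $x$ to $y$ in $\D$ and a fixed boundary arc $J'$ not containing $x$ or $y$, one has $\eta \cap J' \neq \emptyset$ a.s., in fact $\eta$ touches $J'$ on a set of positive dimension. This is exactly the kind of statement recorded in Proposition \ref{dimension bd} together with the identification (Theorem 1.1.1 of \cite{WaWu}, quoted in Section \ref{LLs}) that $\eta^{a,x,y}$ has the law of an $\SLE_4(-a/\lambda, a/\lambda - 2)$ process: for $a$ close enough to $2\lambda$ the force-point exponents put us in the boundary-touching regime, and by the target-invariance / reversibility of level lines and conformal invariance one can arrange the force points so that the relevant boundary arc is hit with positive (hence, by $0$--$1$ type arguments as in the proof of Claim \ref{btb}, full) probability. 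Concretely I would map $\D$ so that $x,y$ go to two prescribed points and $J'$ to a fixed sub-interval, and invoke that an $\SLE_4(\rho_1,\rho_2)$ with $\rho_2 \in (-2, 0)$ hits any interval it could possibly hit with positive probability, exactly as used at the end of the proof of Claim \ref{btb}.

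The second step is the \textbf{chaining / contradiction}. Fix a countable dense family $(J_k)$ of closed subarcs of $\partial\D$ avoiding $x,y$, and a rational $a^\ast$ (close to $2\lambda$) for which step one applies uniformly to all $J_k$. On the a.s. event that $\eta^{a^\ast,x,y}$ touches every $J_k$, no complementary component of $\eta^{a^\ast,x,y}$ — hence no component of $\mathbf F(x,y)$ — can contain any $J_k$, and therefore none can contain a nondegenerate boundary arc at all. The only subtlety is making sure the event "$\eta^{a^\ast,x,y}$ touches $J_k$" has probability one rather than merely positive probability; this I would get either from a Blumenthal-type $0$--$1$ law along the curve (the curve is reversible and, near a boundary point it could hit, the hitting event is a tail event), or more robustly by the same extremal-distance monotonicity device as in Claim \ref{btb} — decompose $J_k$ into many small pieces and note the curve must hit at least one of infinitely many "independent enough" candidate intervals with probability one.

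The \textbf{main obstacle} is the second step: upgrading "hits $J'$ with positive probability" to "hits $J'$ almost surely", uniformly over a dense family of arcs. For SLE-type curves this is standard in spirit but requires a little care — one wants a clean $0$--$1$ statement for the event that the level line accumulates on a fixed boundary arc. I expect the cleanest route is to quote the boundary-intersection results behind Proposition \ref{dimension bd} in the sharper form that the intersection with \emph{any} fixed admissible sub-arc is nonempty a.s. (this is part of Theorem 1.6 of \cite{MW} / the analysis in \cite{Lukas}), combined with conformal invariance; alternatively the extremal-distance pumping argument already deployed in Claim \ref{btb} gives it by hand. Everything else (the reduction in the first paragraph) is soft and uses only that $\mathbf F(x,y)$ is a closure of a countable union of the $\eta^{a,x,y}$, all coupled with the same GFF and hence measurable functions of it, so the complementary components behave well.
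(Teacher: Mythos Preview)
Your approach has a genuine gap: the assertion that a single level line $\eta^{a^\ast,x,y}$ touches every fixed boundary arc $J_k$ almost surely is false. For any fixed $a\in(0,2\lambda)$, the set $\eta^{a,x,y}\cap\partial\D$ is closed and has Hausdorff dimension strictly less than $1$ (Proposition~\ref{dimension bd}); were it to meet every sub-arc a.s., it would be dense and hence all of $\partial\D$, contradicting the dimension bound. Concretely, each excursion of the level line away from $\partial\D$ cuts off a complementary region whose boundary contains a genuine sub-arc of $\partial\D$ disjoint from the curve --- this is exactly the picture after the first step $A^1$ of the ALE construction (Section~\ref{Cons basic A}, Figure~\ref{fig:firstiteration}), where every component of $\D\setminus A^1$ carries a boundary arc on $\partial\D$. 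So for any fixed $a^\ast$ and fixed $J'$ the event $\{\eta^{a^\ast,x,y}\cap J'=\emptyset\}$ has strictly positive probability. Neither of your proposed upgrades works: hitting a fixed arc away from the endpoints is not a germ event for a Blumenthal-type $0$--$1$ law, and the extremal-distance device of Claim~\ref{btb} produces a chain of loops in a TVS, not a single curve that is forced to touch a prescribed boundary arc. The results behind Proposition~\ref{dimension bd} in \cite{MW,Lukas} compute the dimension of the boundary intersection; they do not assert that the curve meets every admissible sub-arc.

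The paper proceeds quite differently, using the full one-parameter family of level lines rather than a single one. It shows that as $\epsilon\to 0$ the $(-2\lambda+\epsilon,\epsilon)$-level line (resp.\ the $(-\epsilon,2\lambda-\epsilon)$-level line) converges in Hausdorff distance to one of the two boundary arcs from $x$ to $y$: indeed it is contained in $\Aa_{-2\lambda+\epsilon,\epsilon}$, and this TVS collapses to $\partial\D$ because its decreasing limit is a BTLS with non-negative harmonic function, hence empty by Lemma~9 of \cite{ASW}. Consequently every interior point $z$ is, for some random $\epsilon(z)>0$, sandwiched between the two extreme level lines, both of which lie in $\mathbf F(x,y)$; this separates the component of $\D\setminus\mathbf F(x,y)$ containing $z$ from either boundary arc. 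Your soft reduction in the first paragraph is fine, but the ``single-curve input'' you need simply does not hold; what is required is a family of curves sweeping across the disk.
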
 

\begin{proof}
WLOG let $x = -i$ and $y = i$. It suffices to prove the that for any $z \in \D$, there is some random $\epsilon(z)$, such that the point $z$ is to the right of the $(-2\lambda+\epsilon,\epsilon)$ level line and to the left of the $(-\epsilon, 2\lambda - \epsilon)$ level line. To show this, we argue that $(-2\lambda+\eps,\eps)$ level line from $x$ to $y$ converges to the clock-wise arc from $-i$ to $i$, and that the $(-\epsilon, 2\lambda -\epsilon)$ level line converges to the counter-clock-wise arc from $-i$ to $i$.

We know that the $(-2\lambda+\epsilon,\epsilon)$ level line is contained in $\Aa_{-2\lambda+\epsilon,\epsilon}$. But $\Aa_{-2\lambda+\epsilon,\epsilon}$ converges to $\partial \D$ as $\epsilon \to 0$: indeed, by Lemma \ref{BPLS} it converges to a local set $A$, that is moreover a BTLS (i.e. a thin local set with bounded $h_A$). As $h_A$ is however non-negative, this set has to be empty by Lemma 9 in \cite{ASW} (using $A=\partial \D$, $B=A$ and $k=0$). Thus, we know that the $(-2\lambda+\epsilon,\epsilon)$ level line converges to a part of the boundary in Hausdorff distance. As this part of the boundary is always to the left of the $(-\lambda, \lambda)$ level line, it has to converge to the clock-wise arc from $-i$ to $i$. Similarly the $(-\epsilon, 2\lambda -\epsilon)$ level line converges to the counter-clock-wise arc from $-i$ to $i$ and the claim follows.
\end{proof}

We now prove the corollary:

\begin{proof}
	
	WLOG let $x = -i$ and $y = i$. Define the $\epsilon$-SLE$_4$ fan:
	\[
	\mathbf F^\epsilon(x,y):=\overline{\bigcup_{a\in (\eps, 2\lambda) \cap \Q} \eta^{a,x,y}}.\]
	Let us note that for all $\epsilon >0$, $\mathbf F^\epsilon(x,y)\subseteq \Aa_{-2\lambda,2\lambda-\eps}$. This is because, for any $\epsilon \leq c < 2\lambda$, the $(-c,2\lambda -c)$ level line is contained in $\Aa_{-c, 2\lambda -c} \subseteq \Aa_{-2\lambda, 2\lambda-\eps}$ by monotonicity. Moreover, for all $\epsilon >0$, a.s. $\mathbf F^\epsilon(x,y)$ remains to the left of $(- \epsilon,2\lambda - \epsilon)$-level line $\eta^{2\lambda - \epsilon,x,y}$. 
	
	Now, consider a connected component $O$ of $\D \backslash \eta^{\epsilon,x,y}$. For the ease of notation set $u_\epsilon=h_{\Aa_{\eta^{\epsilon,x,y}}}$. We claim that $\mathbf F^\epsilon(x,y) \cap O$ is contained in  $\Aa_{-2\lambda,2\lambda-\epsilon}^{u_\epsilon}(\Gamma^{\Aa_{\eta^{\epsilon,x,y}}},O)$, the two-valued set for a GFF with piece-wise constant boundary condition changing in two points, as considered in Remark \ref{rem: 2 boundary values}. Indeed, notice that $\Aa_{-2\lambda,2\lambda-\eps}$ (on the whole domain) can be constructed in two steps: we first explore $\eta^{\epsilon,x,y}$, and then in each connected component $O$ of $\D \backslash \eta^{\epsilon,x,y}$ we construct the $\Aa_{-2\lambda,2\lambda-\epsilon}^{u_\epsilon}(\Gamma^{\Aa_{\eta^{\epsilon,x,y}}},O)$. But $\Aa_{-2\lambda+\epsilon,2\lambda} \cap O$ equals $\Aa^{u_\epsilon}_{-2\lambda,2\lambda-\epsilon}(\Gamma^{\Aa_{\eta^{\epsilon,x,y}}},O)$, and we conclude this claim.
	
	Further, any two loops $\ell_1,\ell_2$ of $\mathbf F^\epsilon(x,y)$ that are inside the same component $O$ of $\D \backslash \eta^{\epsilon,x,y}$ always surround some loops of $\Aa_{-2\lambda,2\lambda-\epsilon}^{u_\epsilon}(\Gamma^{\Aa_{\eta^{\epsilon,x,y}}},O)$. From Theorem \ref{mainthm} (see also Remark \ref{rem: gen 4.1} as we are in the setting where the boundary values are piece-wise constant and change twice) it follows that the loops of $\Aa_{-2\lambda,2\lambda-\epsilon}^{u_\epsilon}(\Gamma^{\Aa_{\eta^{\epsilon,x,y}}},O)$ are point-connected. Thus, 
	we conclude that $\ell_1$ and $\ell_2$ are point-connected via loops of $\mathbf F^\epsilon(x,y)$ that remain inside $O$.
	
	To finish the proof it suffices to observe that for any two given loops of $\mathbf F(x,y)$ (i.e. loops around some fixed points $z$ and $w$), there exists (a random) $\epsilon$ such that both of them are loops of $\mathbf F^{\epsilon}(x,y)$ and, moreover, are contained in the same connected component $O$ of $\D \backslash \eta^{\epsilon,x,y}$ that lies to the left of $\eta^{\epsilon,x,y}$. This just follows from the proof of Claim \ref{claim:: no boundary}.
\end{proof}

\section{Measurability of labels for two-valued local sets}
We now use the properties of the graph $G_p$ to study the following question: \textit{can the labels of $\Aa_{-a,b}$ be recovered just from the geometry of $\Aa_{-a,b}$?}. The answer is given by the following proposition:

	\begin{prop}\label{mes label}
		Let $a,b>0$ and consider the local set coupling $(\Gamma, \Aa_{-a,b}, h_{\Aa_{-a,b}})$:
		\begin{itemize}
			\item If $2\lambda \leq a+b <4 \lambda$ and $a\neq b$, then the labels of $\Aa_{-a,b}$ are a measurable function of the set $\Aa_{-a,b}$ (or in other words $h_{\Aa_{-a,b}}$ is measurable w.r.t. $\Aa_{-a,b}$).
			\item If $\lambda \leq a <2\lambda$, the labels of $\Aa_{-a,a}$ are a measurable function of the set $\Aa_{-a,a}$ and the label of the loop surrounding $0$.
			\item If $a+b\geq 4\lambda$, the labels of $\Aa_{-a,b}$ cannot be recovered only knowing $\Aa_{-a,b}$ and any finite number of labels.
		\end{itemize}
	\end{prop}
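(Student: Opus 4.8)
\emph{Proof idea.} The plan is to treat the three bullets separately: the first two are quick consequences of the connectivity and bipartiteness of $G_p$ in Theorem~\ref{mainthm} together with Corollary~\ref{recons}, while the third rests on the i.i.d.\ coin-flip description of $\Aa_{-2\lambda,2\lambda}$ in Proposition~\ref{BPCLE}(3). For the \textbf{first bullet} ($2\lambda\le a+b<4\lambda$, $a\ne b$): since $a+b<4\lambda$ forces $\min(a,b)<2\lambda$, and since $\Aa_{-a,b}(\Gamma)=\Aa_{-b,a}(-\Gamma)$ with the harmonic functions related by a sign, I would assume $a<2\lambda$. By Corollary~\ref{recons} the union of all loops of $\Aa_{-a,b}$ touching $\partial\D$ with Hausdorff dimension $1-(2-a/\lambda)^2/4$ equals $\A a$; by Remark~\ref{ALE} and Corollary~\ref{cor bd} this is a non-empty family of loops, each carrying label $-a$, so from $\Aa_{-a,b}$ alone one reads off a loop $\ell_0$ of known label $-a$. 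Now $G_p(\Aa_{-a,b})$ is connected and bipartite by Theorem~\ref{mainthm}, and (using that consecutive loops of a point-connected path have different labels) its two colour classes are exactly the loops labelled $-a$ and those labelled $b$; since $G_p$ is a function of the set, so is this $2$-colouring, and any loop $\ell$ has label $-a$ when $d_{G_p}(\ell,\ell_0)$ is even and $b$ otherwise. Hence $h_{\Aa_{-a,b}}$ is a measurable function of $\Aa_{-a,b}$.

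For the \textbf{second bullet} ($\lambda\le a<2\lambda$, so $2\lambda\le 2a<4\lambda$): again $G_p(\Aa_{-a,a})$ is connected and bipartite with colour classes the two labels. The set alone cannot single out which class is $-a$ (given $\Aa_{-a,a}$, the map $h\mapsto-h$ is a measure-preserving involution, by $\Aa_{-a,a}(\Gamma)=\Aa_{-a,a}(-\Gamma)$ and $\Gamma\overset{d}{=}-\Gamma$), but $0\notin\Aa_{-a,a}$ a.s.\ (it has Minkowski dimension $<2$ by Proposition~\ref{cledesc2}), so there is a well-defined loop surrounding $0$; given its label, propagating parities along $G_p$ exactly as above recovers all labels.

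For the \textbf{third bullet} ($a+b\ge4\lambda$): if $a=b=2\lambda$ this is immediate from Proposition~\ref{BPCLE}(3), since conditioning on any $N$ of the i.i.d.\ fair labels of the (infinitely many) loops of $\Aa_{-2\lambda,2\lambda}$ leaves the rest undetermined. For general $a+b\ge4\lambda$ I would produce an analogous family of ``hidden fair coins'' carried by genuine loops of $\Aa_{-a,b}$. Using $\Gamma\to-\Gamma$ assume $b\ge a$ and put $c=(b-a)/2$. I would construct $\Aa_{-a,b}$ by first creating components labelled $c$ (explore $\Aa_{-a,c}$ when $b>a$, legitimate since $a+c=(a+b)/2\ge2\lambda$; when $a=b$ take the CLE$_4$ below to be the outermost set), then exploring inside each such component $O$ a CLE$_4$, $\Aa_{-2\lambda,2\lambda}(\Gamma^O,O)$, whose complementary components $U$ receive i.i.d.\ fair signs $S_U\in\{+,-\}$, i.e.\ labels $c\pm2\lambda$, by Proposition~\ref{BPCLE}(3), and finally completing inside $U$ with $\Aa_{-(a+c+2\lambda),\,b-c-2\lambda}(\Gamma^U,U)$ when $S_U=+$ and $\Aa_{-(a+c-2\lambda),\,b-c+2\lambda}(\Gamma^U,U)$ when $S_U=-$. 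The choice $c=(b-a)/2$ is made precisely so that the two parameter pairs that can appear, $((a+b)/2+2\lambda,(a+b)/2-2\lambda)$ and its reverse, are exchanged; by $\Aa_{-p,q}(\Gamma)=\Aa_{-q,p}(-\Gamma)$ the set explored inside $U$ then has the \emph{same law as an unlabelled random set} whatever $S_U$ is. I would deduce that, conditionally on the unlabelled set $\Aa_{-a,b}$, the signs $(S_U)$ remain i.i.d.\ fair coins, independent of the data outside the $U$'s, and hence that a loop $\ell^*_U$ of $\Aa_{-a,b}$ produced by the two-valued set inside $U$ satisfies $\mathbb{P}(\mathrm{label}(\ell^*_U)=-a\mid\Aa_{-a,b})=\tfrac12$: its label is a deterministic function of $S_U$ and of the conditionally independent inner field, and a one-line computation using that $S_U$ is an unbiased coin gives $\tfrac12$ regardless of the inner conditional law. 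Since there are infinitely many disjoint $U$'s, for any finite list $\ell_1,\dots,\ell_N$ one can choose $U$ with none of the $\ell_i$ in its region, and then $\mathrm{label}(\ell^*_U)$ is still an unbiased coin given $\Aa_{-a,b}$ and the labels of $\ell_1,\dots,\ell_N$. The borderline case $a+b=4\lambda$ has the inner sets empty and must be done slightly differently: for $a<2\lambda$ I would use $\Aa_{-a,-a+4\lambda}=\A a\cup\bigcup_{O:\,-a+2\lambda}\Aa_{-2\lambda,2\lambda}(\Gamma^{\A a},O)$, whose CLE$_4$ complementary components are already loops of $\Aa_{-a,-a+4\lambda}$ carrying labels $\{-a,-a+4\lambda\}$ that, conditionally on the set, are i.i.d.\ fair coins.

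The hard part will be the third bullet: carefully justifying the chain of conditional independences (that conditioning on the \emph{unlabelled} set $\Aa_{-a,b}$ leaves the signs $S_U$ i.i.d.\ and unbiased, and keeps them so after revealing finitely many labels of other loops), and identifying which of the curves produced by the construction are genuine loops of $\Aa_{-a,b}$ — note for instance that the outer CLE$_4$ loop bounding a component $U$ need not itself be a loop of $\Aa_{-a,b}$, since the two-valued set explored inside $U$ may touch it, which is exactly why the hidden coin is read off a loop lying inside $U$ rather than off that outer loop. Parts (1) and (2) should be routine once Theorem~\ref{mainthm} and Corollary~\ref{recons} are in hand.
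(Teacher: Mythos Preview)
Your proof is correct and follows essentially the same route as the paper. For the first two bullets you use exactly the paper's argument: identify a loop of known label via Corollary~\ref{cor bd}/\ref{recons} (or, in the symmetric case, take the loop around $0$ as given), then propagate labels through the connected bipartite graph $G_p$ of Theorem~\ref{mainthm}.

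For the third bullet the underlying idea is also the same---reduce via $c=(b-a)/2$ to a symmetric situation and exploit the i.i.d.\ fair coins of an embedded $\Aa_{-2\lambda,2\lambda}$ together with the involution $\Aa_{-p,q}(\Gamma)=\Aa_{-q,p}(-\Gamma)$---but the packaging differs. The paper avoids the conditional-independence bookkeeping you flag as ``the hard part'' by instead \emph{constructing an explicit second GFF} $\tilde\Gamma$: resample the CLE$_4$ signs independently, and in each component where the new sign disagrees with the old one keep the same inner set but flip all inner labels (using $h_{\Aa_{-p,q}(\gamma)}=-h_{\Aa_{-q,p}(-\gamma)}$). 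This produces $(\Gamma,\tilde\Gamma)$ with $\Aa_{-a,b}(\Gamma)=\Aa_{-a,b}(\tilde\Gamma)$ almost surely, infinitely many loops whose labels differ, and, since the resampled signs are independent fair coins, positive probability that any prescribed finite collection of labels agrees. Your conditional-probability formulation is equivalent (the symmetry $(+,\gamma)\leftrightarrow(-,-\gamma)$ you implicitly use is exactly the paper's label-flip), and the paper's two-GFF device is just a cleaner way to avoid spelling out the $\sigma$-algebra gymnastics. One small imprecision: your phrase ``conditionally independent inner field'' is not quite right, since conditioning on $\Aa_{-a,b}$ does constrain $\Gamma^U$; the point is rather that the measure-preserving involution $(S_U,\Gamma^U)\mapsto(-S_U,-\Gamma^U)$ fixes the set but flips the label, which forces the conditional probability to be $\tfrac12$.
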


We now prove this proposition and then describe the explicit law of the labels in the case $a+b = 4\lambda$.

\subsubsection*{Connected case with $a\neq b$ $(2\lambda \leq a+b<4\lambda)$.} We may again assume that $a<2\lambda$. From Corollary \ref{cor bd} we know the label of any loop touching the boundary with Hausdorff dimension $1-(2-a/\lambda)^2/4$ has the label $-a$. Moreover by Lemma \ref{keepingloops} there is some loop $\ell_a$ with label $-a$ that touches the boundary. But now from Theorem \ref{mainthm} it follows that any loop $\ell$ is point-connected to $\ell_a$. The label of $\ell$ is $-a$ if the graph distance in $G_p$ between $\ell$ and $\ell_a$ is even, and $b$ if it is odd.

\subsubsection*{Connected case with case with $a=b$ $(\lambda\leq a< 2\lambda)$.} The previous proof fails as loops of label $\pm$ touch the boundary with the same Hausdorff dimension. However, as soon as we know the label of the loop surrounding $0$ we can again similarly use the connectedness of $G_p$ to deduce the claim. 

\subsubsection*{Disconnected case $(a+b\geq 4\lambda)$.}
This case needs a bit more care. The idea is to use the fact that conditionally on the set $\Aa_{-2\lambda,2\lambda}$, the labels are given by i.i.d. fair coin tosses (Proposition \ref{BPCLE} (3)).

	In this respect, we show that there are two GFF $\Gamma$ and $\tilde \Gamma$ such that a.s.
	\begin{enumerate}[(a)]
		\item $\Aa_{-a,b}(\Gamma)=\Aa_{-a,b}(\tilde \Gamma)$.
		\item There are infinitely many loops of $\Aa_{-a,b}(\Gamma)$, such that their label under $\Gamma$ is different than their label under $\tilde \Gamma$.
		\item Any finite subset of labels has the same value for $\Gamma$ and $\tilde \Gamma$ with positive probability.
	\end{enumerate} Note that this implies the statement, as conditionally on $\Aa_{-a,b}$ and any finite subset of labels, the (conditional) law of the rest of the labels is non-trivial.
	
	First let us construct this coupling when $a=b\geq 2\lambda$. We first sample a GFF $\Gamma$ and then explore $\Aa_{-a,a}(\Gamma)$. We do this in two steps:
		\begin{enumerate}
		\item We explore $\Aa_{-2\lambda,2\lambda}(\Gamma)$. 
		\item  We explore $\Aa_{-a+2\lambda,a+2\lambda}(\Gamma^{\Aa_{-2\lambda,2\lambda}},O)$ in all connected components $O$ of $D\backslash \Aa_{-2\lambda,2\lambda}$ labelled $-2\lambda$, and we explore $\Aa_{-a-2\lambda,a-2\lambda}(\Gamma^{\Aa_{-2\lambda,2\lambda}},O)$ in all connected components $O$ with label $2\lambda$. 
	\end{enumerate}
Note that in this construction the law of the TVS being explored in each component $O$ is the same as the law of $-\Gamma$ and $\Gamma$ agree. 

	Let us now construct $\tilde \Gamma$. We start by constructing $\Aa_{-a,a}(\tilde \Gamma)$ and its labels. First, and as an equivalent of (1), set $\Aa_{-2\lambda,2\lambda}(\tilde \Gamma)=\Aa_{-2\lambda,2\lambda}$ but resample the labels of the loops of $\Aa_{-2\lambda,2\lambda}(\tilde \Gamma)$ independently, by tossing an independent fair coin for each loop. 
	
There are now two types of connected components of  $D\backslash\Aa_{-2\lambda,2\lambda}(\tilde \Gamma)$: those where the new labels agree with the ones sampled before for $\Aa_{-2\lambda,2\lambda}(\Gamma)$ and those where the new labels differ. In the components of the first type, we do the equivalent of (2) using exactly the same set and labels as the ones used for $\Gamma$. In other words, in loops labelled $-2\lambda$ we set $\Aa_{-a+2\lambda,a+2\lambda}(\tilde \Gamma^{\Aa_{-2\lambda,2\lambda}},O)= \Aa_{-a+2\lambda,a+2\lambda}(\Gamma^{\Aa_{-2\lambda,2\lambda}},O)$ and we keep the same labels; analogously for the components of the first type, but with label $2\lambda$. In those connected components $O$ where the sign changed we use again the same set, but change the sign of all the labels inside. We have thus constructed $\tilde h_{\Aa_{-a,a}}$.
	
	Finally, define $\tilde \Gamma^{\Aa_{-a,a}}$ in some way, say by setting it equal to $\Gamma^{\Aa_{-a,a}}$. Due to the equality in law noted at the beginning, $\tilde \Gamma$ has the law of a GFF. Additionally, it is clear to see that $\Gamma$ and $\tilde \Gamma$ satisfy the desired properties. 
	
	For the general case $a \neq b$ assume WLoG that $a\leq b$ and define $m=(b-a)/2>0$.  To construct $\Aa_{-a,b}$ we first explore $\Aa_{-a,m}$.
	To finish $\Aa_{-a,b}$ it then remains to explore  $\Aa_{-a-m,b-m}$, i.e. $\Aa_{-(b+a)/2,(b+a)/2}$, inside the loops labelled $m$. But observe that this is again a two-valued set of the form $\Aa_{-a',a'}$. Thus by doing the same coupling as above for $\Aa_{-(b+a)/2,(b+a)/2}$, we deduce the claim.

\subsubsection*{Law of the labels conditioned on $\Aa_{-a,b}$ in the critical case $a+b = 4\lambda$} In the case $a+b=4\lambda$, one can moreover precisely describe the law of the labels:

\begin{prop}
	Let $0<a\leq 2\lambda$. Then, the law of the labels of $\Aa_{-a,-a+4\lambda}$ given $\Aa_{-a,-a+4\lambda}$ is the following:
	\begin{itemize}
		\item The loops touching the boundary are labelled $-a$.
		\item For each loop that does not touch the boundary we toss independent fair coins to decide whether the label is equal to $-a$ or $-a + 4\lambda$. 
	\end{itemize}
\end{prop}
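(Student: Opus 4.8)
The plan is to read off the answer from the concrete construction of $\Aa_{-a,-a+4\lambda}$ used in Part (3) of the proof of Theorem \ref{mainthm}. Write $b:=-a+4\lambda$, so that $a+b=4\lambda$ and $b\geq 2\lambda$; recall that $\Aa_{-a,b}$ is obtained by first exploring $\A{a}$ and then, inside every connected component $O$ of $\D\backslash\A{a}$ labelled $-a+2\lambda$, exploring an independent copy of $\Aa_{-2\lambda,2\lambda}(\Gamma^{\A{a}},O)$, which has the law of a CLE$_4$ carpet. I would first dispose of the degenerate case $a=2\lambda$: then $b=2\lambda$, the set $\Aa_{-2\lambda,2\lambda}$ has the law of a CLE$_4$, no loop touches $\partial\D$ by Proposition \ref{BPCLE}(2), and the coin-toss statement is precisely Proposition \ref{BPCLE}(3). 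So from now on I assume $0<a<2\lambda$.

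\emph{Identifying the boundary-touching loops.} Since $a<2\lambda$, Lemma \ref{keepingloops}(2) says that a loop of $\Aa_{-a,b}$ labelled $-a$ touches $\partial\D$ if and only if it is a loop of $\A{a}$ labelled $-a$, and Lemma \ref{keepingloops}(1) guarantees that each such loop is indeed a loop of $\Aa_{-a,b}$, with label $-a$. Since $b=4\lambda-a>2\lambda$, applying Lemma \ref{keepingloops}(2) to $\Aa_{-b,a}(-\Gamma)=\Aa_{-a,b}(\Gamma)$ shows that no loop labelled $b$ touches $\partial\D$. Hence the boundary-touching loops of $\Aa_{-a,b}$ are exactly the $(-a)$-labelled loops of $\A{a}$, all carrying label $-a$; this gives the first bullet.

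\emph{The remaining loops and their labels, conditionally on $\A{a}$.} By the construction, every loop of $\Aa_{-a,b}$ that is not a $(-a)$-loop of $\A{a}$ is a loop of $\Aa_{-2\lambda,2\lambda}(\Gamma^{\A{a}},O)$ for a unique component $O$ of $\D\backslash\A{a}$ labelled $-a+2\lambda$ (the $(-a+2\lambda)$-loops of $\A{a}$ are dissolved by the carpets and are not themselves loops of $\Aa_{-a,b}$), and such a loop does not touch $\partial\D$ by Proposition \ref{BPCLE}(2) applied in $O$, together with $\overline O\cap\partial\D\subseteq\partial O$. Conditionally on $\A{a}$, the fields $\Gamma^{\A{a}}|_O$ are independent zero-boundary GFFs, so the carpets are conditionally independent CLE$_4$'s, and by Proposition \ref{BPCLE}(3) the labels inside each carpet are, conditionally on that carpet, i.i.d.\ fair signs in $\{-2\lambda,2\lambda\}$, independently across the $O$'s. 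Adding the harmonic value $-a+2\lambda$ of $h_{\A{a}}$ on $O$, I conclude that, conditionally on $\A{a}$ and on all the carpets, the labels of these loops are i.i.d.\ fair coins in $\{-a,-a+4\lambda\}$, while the labels of the boundary-touching loops are deterministically $-a$.

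\emph{Replacing the conditioning by $\Aa_{-a,b}$.} The final step is that the $\sigma$-algebra generated by $\A{a}$ together with all the carpets coincides with $\sigma(\Aa_{-a,b})$. Indeed, since $a\neq b$ and $a<2\lambda$, Corollary \ref{recons} recovers $\A{a}$ from $\Aa_{-a,b}$ as the union of all loops of $\Aa_{-a,b}$ touching the boundary with Hausdorff dimension $1-(2-a/\lambda)^2/4$; once $\A{a}$ is known, each relevant $O$ is a component of $\D\backslash\A{a}$ met by $\Aa_{-a,b}$, with $\Aa_{-2\lambda,2\lambda}(\Gamma^{\A{a}},O)=\Aa_{-a,b}\cap O$, while conversely $\Aa_{-a,b}$ is the closed union of $\A{a}$ with these carpets. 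Hence the conditional law of the labels computed above is their conditional law given $\Aa_{-a,b}$, which is exactly the claim. The one genuinely delicate point is this last identification of $\sigma$-algebras, where the hypothesis $a\neq b$ (and hence the separate treatment of $a=2\lambda$) is essential so that Corollary \ref{recons} applies.
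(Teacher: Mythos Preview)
Your proof is correct and follows essentially the same approach as the paper: build $\Aa_{-a,-a+4\lambda}$ as $\A{a}$ together with $\Aa_{-2\lambda,2\lambda}$ carpets in the $(-a+2\lambda)$-components, identify the boundary-touching loops via Lemma~\ref{keepingloops}, and invoke the i.i.d.\ coin-toss property of CLE$_4$ labels. You are more explicit than the paper about the $\sigma$-algebra step, which is good; the only minor simplification available is that the paper recovers $\A{a}$ from $\Aa_{-a,b}$ directly as the union of \emph{all} boundary-touching loops (using Remark~\ref{ALE}), which avoids the Hausdorff-dimension detour through Corollary~\ref{recons}.
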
 
\begin{proof}
		Note that the result holds for $\Aa_{-2\lambda,2\lambda}$ due to the fact that no loop touch the boundary (Proposition \ref{BPCLE} (ii)). When $a < 2\lambda$, by Lemma \ref{keepingloops}, as $-a+4\lambda\geq 2\lambda$, the loops that touch the boundary are labelled $-a$. The union of the loops touching the boundary is $\A{a}$ by Remark \ref{ALE}. All the other loops are constructed by exploring $\Aa_{-2\lambda,2\lambda}(\Gamma^{\A{a}},O)$ inside any connected component $O$ of $D\backslash \A{a}$ labelled $-a+2\lambda$. But we know that the labels of $\Aa_{-2\lambda, 2\lambda}$ are given by independent fair coin tosses.
\end{proof}

\subsubsection*{Non-independence of labels $\Aa_{-a,b}$ when $b\neq 2\lambda$}

In this section we prove that $\Aa_{-2\lambda,2\lambda}$ is the only TVS for which the labels of the loops are i.i.d. conditioned on the set itself:

\begin{prop}\label{non-independence}
Let $\Gamma$ be a GFF on $\D$. Let moreover $a,b>0$ be such that $a+b> 4\lambda$. Then, the labels of $\Aa_{-a,b}$ conditioned on the underlying set are not independent.
\end{prop}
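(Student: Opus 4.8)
The plan is to leverage the characterization from Proposition \ref{mes label}: when $a+b > 4\lambda$, the labels are not even measurable w.r.t.\ $\Aa_{-a,b}$, and moreover the conditional law, given the set and any finite collection of labels, is genuinely random. If the labels were i.i.d.\ conditionally on the set, then each label would be a fair (or at least fixed-bias) coin toss independent of all others; in particular, conditioning on finitely many labels would not change the conditional law of the remaining ones. I would set this up as the main line of argument and then supply the extra ingredient needed to rule out i.i.d.\ coins: that the \emph{marginal} label law is not what one would need, or, more robustly, that there exist two loops whose labels are correlated.

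First I would recall the coupling construction used in the disconnected case of the proof of Proposition \ref{mes label}. Writing $a \leq b$ and $m = (b-a)/2 > 0$, one explores $\Aa_{-a,m}$ first and then, inside each loop labelled $m$, explores a copy of $\Aa_{-(a+b)/2,(a+b)/2}$; since $(a+b)/2 > 2\lambda$, inside each such loop one further decomposes $\Aa_{-(a+b)/2,(a+b)/2}$ by first exploring $\Aa_{-2\lambda,2\lambda}$ and then continuing inside its loops. Here the key structural fact is: inside a connected component $O$ of $D \setminus \Aa_{-2\lambda,2\lambda}$ with label $2\lambda$, the field $\Gamma^{\Aa_{-2\lambda,2\lambda}}$ restricted to $O$ has a sign, so the subsequent TVS explored in $O$ has all of its "new" loops labelled consistently relative to that sign. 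Concretely, a loop labelled $2\lambda$ of the intermediate $\Aa_{-2\lambda,2\lambda}$ produces, in the next layer, loops whose labels lie in a range shifted by $+2\lambda$ relative to a loop labelled $-2\lambda$. This means that the labels of two loops that are "nested" through such a construction are not independent: knowing the label of an outer loop constrains which labels are even possible for the inner loops.

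Then I would make this precise as follows. Pick a point $z \in \D$ and let $\ell_1(z) \supset \ell_2(z)$ be, respectively, the loop of the intermediate CLE$_4$-type set $\Aa_{-2\lambda,2\lambda}$ surrounding $z$ (in the relevant sub-domain of the construction) and the loop of $\Aa_{-a,b}$ surrounding $z$ that is created in the layer below. The label of $\ell_2(z)$ takes values in $\{-a, b\}$, but conditionally on $\Aa_{-a,b}$ and on the label of $\ell_1(z)$ being $-2\lambda$ versus $+2\lambda$, the \emph{distribution} of the label of $\ell_2(z)$ is different — because in one case the harmonic function inside $\ell_1(z)$ is $-2\lambda$ and in the other $+2\lambda$, and the next-layer TVS is $\Aa_{-2\lambda,2\lambda}$ shifted accordingly. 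Since the conditional law of one label depends on another, the labels cannot be i.i.d.\ given the set. To run this argument cleanly I would work with two distinct loops that are both visible in $\Aa_{-a,b}$ (so that conditioning on one label is legitimate conditioning on a function of the labels), using the $G_p$-connectivity of $\Aa_{-a,b}$ only to the extent needed; in the disconnected regime $a+b \geq 4\lambda$ the relevant loops are the nested ones described above.

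The main obstacle I anticipate is bookkeeping: making sure that the loop $\ell_1(z)$ I condition on is genuinely a loop of $\Aa_{-a,b}$ (and not merely of an auxiliary intermediate set used in the construction), so that "conditioning on its label" is conditioning on a measurable function of the pair $(\Aa_{-a,b}, h_{\Aa_{-a,b}})$. One needs to identify a loop of the intermediate $\Aa_{-2\lambda,2\lambda}$ that survives as a loop of the final $\Aa_{-a,b}$ with the \emph{same} label (this is exactly the type of statement guaranteed by Lemma \ref{keepingloops}, which says loops labelled $-a$ persist under enlarging $b$, and its analogues), and then a strictly smaller loop inside it whose label is correlated with it. Once the two loops are pinned down as bona fide loops of $\Aa_{-a,b}$, the correlation follows from the sign-coherence of the field inside a CLE$_4$ component, and the non-independence is immediate. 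An alternative, possibly cleaner route that avoids some of this bookkeeping is to argue by contradiction directly from Proposition \ref{mes label}: if the labels were conditionally i.i.d.\ with some fixed bias $p$, then $h_{\Aa_{-a,b}}$ conditioned on $\Aa_{-a,b}$ together with any finite set of labels would still be a product of $\mathrm{Bernoulli}(p)$ variables on the remaining loops, which is a fixed (non-random) conditional law — contradicting the assertion in the third bullet of Proposition \ref{mes label} that the conditional law of the remaining labels is non-trivial in a way that depends on the realization. I would likely present the contradiction argument as the main proof and relegate the nested-loop correlation to a remark, since the former is shorter.
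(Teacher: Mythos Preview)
Your main proposed argument (Route 2, contradiction from Proposition~\ref{mes label}) does not work. The third bullet of Proposition~\ref{mes label} asserts only that the labels cannot be \emph{recovered} from the set together with any finite collection of labels; equivalently, the conditional law of the remaining labels is not a Dirac mass. But if the labels were conditionally i.i.d.\ Bernoulli$(p)$ given the set, then after conditioning on finitely many of them the remaining ones are still i.i.d.\ Bernoulli$(p)$ --- a non-degenerate law, hence perfectly consistent with the statement of Proposition~\ref{mes label}. There is no contradiction. (Also note the proposition asserts non-\emph{independence}, not merely non-i.i.d.; your Route~2, even if it worked, would only address the latter.)

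Your Route~1 has the gap you yourself flag, and it is a real one. The intermediate $\Aa_{-2\lambda,2\lambda}$ loop $\ell_1(z)$ you want to condition on is not a loop of $\Aa_{-a,b}$; more importantly, the whole decomposition via $\Aa_{-2\lambda,2\lambda}$ is a function of $\Gamma$, not of the set $\Aa_{-a,b}$, so after conditioning on $\Aa_{-a,b}$ you cannot appeal to it. Lemma~\ref{keepingloops} goes in the wrong direction for what you need here.

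The paper's proof is entirely different and avoids these issues. It shows, via Lemma~\ref{Recover O}, that suitably normalised averages of $h_{\Aa_{-a,b}}$ just \emph{outside} the loop $\hat\ell$ surrounding $0$ converge a.s.\ to $-a+2\lambda$ or $b-2\lambda$ according to whether the label of $\hat\ell$ is $-a$ or $b$. Since these averages depend only on the set and on the labels of the \emph{other} loops, the label of $\hat\ell$ is measurable with respect to $\Aa_{-a,b}$ together with all other labels. As the label of $\hat\ell$ is not measurable with respect to $\Aa_{-a,b}$ alone (this is what Proposition~\ref{mes label} does give), it cannot be conditionally independent of the other labels. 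The technical work lies in constructing an auxiliary BTLS $A$ for which the boundary value of $h_A$ on the outside of $\hat\ell$ near a chosen point is exactly $-a+2\lambda$ (resp.\ $b-2\lambda$), and in controlling the GFF fluctuations in the averages.
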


\begin{rem}
In fact Proposition \ref{mes label} shows directly that in the case $a+b\leq 4\lambda$, a weaker version of the above proposition is true, where we replace ``independent'' by ``i.i.d.''. Indeed, recall in that case $a+b<4\lambda$ the labels are determined by the set, thus they are trivially conditionally independent. And in the case $a+b=4\lambda$ with $a\neq b$, conditionally on $\Aa_{-a,b}$ the labels are independent, but the loops touching the boundary are determined by the set. In both cases conditioned on the underlying set the labels are however not i.i.d.
\end{rem}

The idea of the proof is to show that - in a certain weak sense - the outer boundary conditions of the loop of $\Aa_{-a,b}$ surrounding $0$ are $-a+2\lambda$, if its label is $-a$, and $b-2\lambda$ if its label is $b$. More precisely, we will show that the averages of the field over tinier and tinier regions around the smallest intersection point of $\R^+$ and the loop converge to either $-a+2\lambda$ or $b-2\lambda$. This choice of point is somewhat arbitrary, but makes the proof technically simpler - the same proof would for example also work for a random point according the harmonic measure on the loop seen from zero.

To define this average, recall that the loops of TVS are Jordan curves, yet they can be relatively rough. Thus it is easier to define averages under a conformal image. In this respect, for a Jordan curve $\ell$ surrounding $0$ and at positive distance from $0$, consider the conformal map $\varphi$ mapping $\ell$ to the unit circle and its outside to the inside of the unit disk, such that minimal intersections points in norm of the coordinate axes $-iR^+, R^+, iR^+$ with $\ell$ are mapped to $-i$, $1$ and $i$ respectively. Let further $L(\theta,\epsilon)$ denote the set of points $z \in \D$ such that $|z|\in [1-\epsilon,1]$ and $\arg(z)\in(-\theta,\theta)$. For some integrable function $f$, we now define the averages 
\[\alpha_n(f,\ell) = \frac{1}{Leb(L(n^{-1},2^{-n}))}\int_{L(n^{-1},2^{-n})}f(\varphi^{-1}(z))dz.\]
Here the exact choices of $\theta$ and $\epsilon$ are not so important, $\theta$ just has to go to zero sufficiently slowly w.r.t. $\epsilon$. The key lemma is as follows: 
\begin{lemma} \label{Recover O}
	Let $\Gamma$ be a GFF in $\D$, $a+b\geq 2\lambda$ and $\hat \ell$ be the loop of $\Aa_{-a,b}$ surrounding $0$. Then, almost surely, as $n\to \infty$ we have that
	\[
		\alpha_n(h_{\Aa_{-a,b}},\hat \ell) \to \left\{\begin{array}{l l}
		-a+2\lambda & \text{if the label of $\hat \ell$ is} -a,\\
		b-2\lambda & \text{if the label of $\hat \ell$ is } b.
		\end{array} \right.\]
\end{lemma}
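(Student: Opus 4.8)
The plan is to exploit the same explore-in-two-steps philosophy used throughout the paper, together with Proposition \ref{BMinterior} (the field averages become Brownian motion when time-parametrised by log conformal radius), to control the harmonic function $h_{\Aa_{-a,b}}$ near $\partial \hat\ell$. First I would reduce to the case $a+b = 2\lambda$, i.e.\ to the ALE $\A{a}$: indeed, for general $a,b$ with $a+b\geq 2\lambda$ one constructs $\Aa_{-a,b}$ by first exploring an ALE $\A{u}$ (or $\Aa_{-a,-a+2\lambda}$, depending on which range we are in) and then iterating further TVS inside the loops not yet carrying a terminal label. The loop $\hat\ell$ of $\Aa_{-a,b}$ surrounding $0$ is therefore contained in the closure of a loop $\ell^\ast$ of that first ALE-type set, and on $\partial\hat\ell$ the function $h_{\Aa_{-a,b}}$ is the sum of the (constant $-a$ or $-a+2\lambda$, resp.\ the appropriate ALE value) contribution of the first layer plus a harmonic function whose boundary values on the subsequent level lines are bounded by a fixed constant and which, in a uniform neighbourhood of a generic boundary point of $\partial\hat\ell$, one shows averages to $0$. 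So the whole problem is to prove the statement for the loop of an ALE surrounding $0$, where the two possible labels are precisely $-a$ and $-a+2\lambda$, and to check that $-a$ corresponds to outer value $-a+2\lambda$ and vice versa --- which is exactly the content of Remark \ref{ALE}: each excursion of the level line separates a loop labelled $-a$ on one side from a loop labelled $-a+2\lambda$ on the other, so the \emph{outer} boundary data of an $\{-a\}$-loop is $-a+2\lambda$ and of an $\{-a+2\lambda\}$-loop is $-a$.

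Next, for the ALE loop $\hat\ell$ around $0$: by the construction in Section \ref{Cons basic A}, $\hat\ell$ is built at some finite random step $N$ of the iteration, and its boundary consists of pieces of a single generalised level line $\eta_0$ (a piece of an $\SLE_4(\rho_1,\rho_2)$) drawn inside the connected component $O$ of the domain complement at step $N-1$, where $h$ on $\partial O\cap\partial D$-side is the relevant ALE value. On the side of $\eta_0$ on which $\hat\ell$ lies, the harmonic function $h_{\A{a}}$ restricted to $\hat\ell$'s complement has boundary value $\lambda$ plus the (constant) value on the other side of $\eta_0$ at that point, so by the level-line property $(*)$ the \emph{outer} boundary value of $h_{\A{a}}$ along $\hat\ell$ is identically the terminal label of $\hat\ell$. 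The point $1 = \varphi^{-1}(\varphi(1))$ is the smallest positive real intersection of $\hat\ell$ with $\R^+$, which lies on the level line $\eta_0$ and is therefore a.s.\ not an endpoint of an SLE excursion (as recalled in Part (4) of the proof of Theorem \ref{mainthm}, excursions are disjoint from the start/endpoint and from each other). Hence a neighbourhood of this point on $\hat\ell$ consists of a genuine arc of the level line with the constant outer label on one side. Via the conformal map $\varphi$ --- and here I would invoke distortion/Beurling estimates (Proposition 3.85 of \cite{Lawler}) to transfer the geometry faithfully, exactly as in the local finiteness proof --- the averaging region $L(n^{-1},2^{-n})$ pulls back to a small region abutting $\hat\ell$ near this point, inside which $h_{\A{a}}$ is harmonic with bounded boundary data equal to the constant outer label on the arc of $\hat\ell$ it touches.

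Finally, to show $\alpha_n \to $ (outer label), I would write $h$ on that region as (outer label) $+$ (a bounded harmonic correction whose boundary values vanish on the relevant piece of $\hat\ell$) and argue the correction averages to zero. The cleanest way is probably the Brownian-motion representation: parametrising by $-\log$ of the conformal radius (from a point near $1$) as in Proposition \ref{BMinterior}, the harmonic extension value at scale $2^{-n}$ is a time-changed Brownian motion evaluated at a time going to $+\infty$; combined with the fact that the boundary condition is bounded and equals the constant label on the part of the boundary ``seen'' with probability tending to $1$ from points at distance $2^{-n}$ (here Beurling: a Brownian motion from such a point exits through $\hat\ell$'s arc near $1$ with overwhelming probability before it can reach the far-away parts of the boundary, because the arc separates locally), one gets convergence of $h(\varphi^{-1}(z))$ to the label uniformly over $z \in L(n^{-1},2^{-n})$, whence of the average. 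The condition that $\theta = n^{-1} \to 0$ much slower than $\epsilon = 2^{-n}$ is precisely what makes the Beurling estimate uniform over the whole averaging region. The main obstacle, and the step requiring the most care, is the last one: controlling the harmonic measure of the ``wrong'' parts of $\partial(\D\setminus\A{a})$ (other loops, other pieces of $\hat\ell$ with possibly different-looking local behaviour, the outer domain boundary) as seen from a point at distance $2^{-n}$ from $\hat\ell$ near the point $1$, uniformly; this is where one must combine the a.s.\ non-degeneracy of $1$ as a level-line point (not an excursion endpoint, $\hat\ell$ at positive distance from $0$) with quantitative Beurling-type estimates and the conformal distortion bounds, and it is here that one genuinely uses that $\hat\ell$ is locally a nice level-line arc rather than an arbitrary Jordan curve.
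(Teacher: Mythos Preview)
Your reduction to the ALE case has a genuine gap. You want to write $h_{\Aa_{-a,b}}$ near $\partial\hat\ell$ as ``(first ALE-layer contribution) $+$ (correction from later layers that averages to $0$)'', but neither summand is what you need it to be. In general $\hat\ell$ is \emph{not} a loop of the first ALE layer, so the first-layer harmonic function is not constant in a neighbourhood of $\partial\hat\ell$. And even when it is, once you iterate further TVS inside the neighbouring ALE loops, $h_{\Aa_{-a,b}}$ on the outside of $\hat\ell$ takes the values $-a$ or $b$ on the various adjacent $\Aa_{-a,b}$-loops; there is no a priori reason the deviation from $-a+2\lambda$ averages to zero near $x^+$ --- that is precisely the content of the lemma, so this step is circular. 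Your step (3) (Beurling $+$ harmonic measure) would show $\alpha_n(h)\to c$ \emph{provided} $h$ equals a constant $c$ on a genuine neighbourhood arc of $\hat\ell$ near $x^+$; it does not help when $h_{\Aa_{-a,b}}$ oscillates between $-a$ and $b$ on the loops accumulating at $x^+$. Proposition~\ref{BMinterior} is also not the right tool here: it concerns $h_{\eta_t}(z)$ along a growing local-set process at a fixed interior point, not averages of a fixed $h_A$ over a shrinking boundary neighbourhood.

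The paper's approach is different in an essential way. One constructs an \emph{auxiliary} BTLS $A$ (not $\Aa_{-a,b}$, and not simply the first ALE layer) such that (i) the loop of $A$ around $0$ is exactly $\hat\ell$, and (ii) $h_A\equiv -a+2\lambda$ in a full open neighbourhood of $x^+$ on the outside of $\hat\ell$. Property (ii) makes $\alpha_n(h_A,\hat\ell)\to -a+2\lambda$ trivial. The transfer to $h_{\Aa_{-a,b}}$ is done by writing $h_{\Aa_{-a,b}}=\Gamma-\Gamma^{\Aa_{-a,b}}$ and $h_A=\Gamma-\Gamma^A$, and proving via a direct variance bound (the Green's-function integral over $L(n^{-1},2^{-n})$ is $O(2^{-cn})$, then Borel--Cantelli) that $\alpha_n(\Gamma^A,\hat\ell)\to 0$ and $\alpha_n(\Gamma^{\Aa_{-a,b}},\hat\ell)\to 0$ a.s. The set $A$ is built iteratively: first $\Aa_{-a+\lambda,b}$, then alternately explore $\Aa_{-\lambda,\lambda}$ and $\Aa_{-\lambda,b+a-2\lambda}$ in the component around $0$ until the loop around $0$ acquires label $-a$ (a geometric number of steps). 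The final step is an $\Aa_{-\lambda,\lambda}$ inside some loop $\ell_0$ labelled $-a+\lambda$, and one checks $x^+\notin\ell_0$ because the $\Aa_{-\lambda,\lambda}$-loop around $0$ a.s.\ misses the fixed boundary point $x_0^+\in\partial\ell_0$; hence near $x^+$ the outside of $\hat\ell$ lies in the $(-a+2\lambda)$-side of that last ALE, giving (ii). This auxiliary-set-plus-GFF-decomposition trick is the missing idea in your sketch.
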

Proposition \ref{non-independence} follows directly from this lemma, as given the set $\Aa_{-a,b}$ and labels of all loops not surrounding $0$, we can calculate $h_{\Aa_{-a,b}}$ outside of $\hat \ell$ and thus $\alpha_n$. We thus conclude that the label of the loop surrounding $0$ is measurable w.r.t. $\Aa_{-a,b}$ and the labels of the loops not surrounding $0$. 

\begin{proof}[Proof of Lemma \ref{Recover O}]
Assume WLOG that the label of $\hat \ell$ is $-a$.

First let us see that, up to a technical claim, it is enough to construct a BTLS $A$ such that the loop of $A$ surrounding $0$ is exactly $\hat \ell$, and under the assumption that the label of $\hat \ell$ is $-a$, almost surely $\alpha_n(h_A) \to 2\lambda-a$. 
Indeed, given such a BTLS $A$, we write $\Gamma = \Gamma^A + h_A$, where $\Gamma^A$ has the law of the GFF in $\D \backslash A$ and at the same time $\Gamma = \Gamma^{\Aa_{-a,b}} + h_{\Aa_{-a,b}}$, where $\Gamma^{\Aa_{-a,b}}$ has the law of the GFF in $\D \backslash \Aa_{-a,b}$. Assume for now the following claim about the convergence of GFF averages:
\begin{claim}\label{claim:tec}
Almost surely $\alpha_n(\Gamma^A,\hat \ell) \to 0$ and $\alpha_n(\Gamma^{\Aa_{-a,b}},\hat\ell) \to 0$ as $n \to \infty$.
\end{claim}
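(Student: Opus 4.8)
The plan is to establish both limits by a second-moment computation, conditionally on $\Aa_{-a,b}$. The two assertions are entirely analogous: $A$ is also a BTLS, by construction $\hat\ell$ is one of its loops, and $\Gamma^A$ is a zero-boundary GFF in $\D\setminus A$; so I will treat $\alpha_n(\Gamma^{\Aa_{-a,b}},\hat\ell)$, the other case being identical. Throughout, condition on $\Aa_{-a,b}$, which determines $\hat\ell$, the conformal map $\varphi$ of the exterior of the Jordan curve $\hat\ell$ onto $\D$, and the point $p:=\varphi^{-1}(1)$ (the minimal point of $\R^+\cap\hat\ell$), and under which $\Gamma^{\Aa_{-a,b}}$ is still a zero-boundary GFF in $\D\setminus\Aa_{-a,b}$. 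Write $L_n:=L(n^{-1},2^{-n})$, so that $\alpha_n(\Gamma^{\Aa_{-a,b}},\hat\ell)=|L_n|^{-1}\langle\Gamma^{\Aa_{-a,b}},g_n\rangle$ with $g_n:=|\varphi'|^2\mathbf 1_{\varphi^{-1}(L_n)}$ the push-forward of Lebesgue measure on $L_n$ by $\varphi^{-1}$.

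The first point, which I expect to be the main technical obstacle, is that $\varphi^{-1}(L_n)\subseteq\D\setminus\Aa_{-a,b}$ for all $n\ge N$, with $N=N(\Aa_{-a,b})$ a.s.\ finite (this is needed for the pairing above to make sense and for the covariance to be the Green's function of $\D\setminus\Aa_{-a,b}$). Indeed $\varphi^{-1}(L_n)$ lies in the exterior of $\hat\ell$, and since $L_n$ shrinks to $1\in\partial\D$ it shrinks to $p\in\hat\ell$; because $\hat\ell$ bounds the component of $\D\setminus\Aa_{-a,b}$ containing $0$ and the loops of $\Aa_{-a,b}$ are locally finite, a one-sided neighbourhood of $p$ exterior to $\hat\ell$ is contained in $\D\setminus\Aa_{-a,b}$, hence so is $\varphi^{-1}(L_n)$ once $n$ is large. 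Making this precise requires controlling the local geometry of $\Aa_{-a,b}$ near $p$ together with the behaviour of $\varphi$ near the (rough) boundary of its domain.

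Granting this, the change of variables $z=\varphi(x)$ gives, for $n\ge N$,
\begin{equation*}
\operatorname{Var}\bigl(\alpha_n(\Gamma^{\Aa_{-a,b}},\hat\ell)\bigr)=\frac{1}{|L_n|^2}\iint_{L_n\times L_n}G_{\D\setminus\Aa_{-a,b}}(\varphi^{-1}z,\varphi^{-1}w)\,dz\,dw .
\end{equation*}
Since $\hat\ell$ is a loop of $\Aa_{-a,b}$, one has $(\D\setminus\Aa_{-a,b})\cap\hat\ell=\emptyset$, so domain monotonicity of the Green's function gives $G_{\D\setminus\Aa_{-a,b}}\le G_{\C\setminus\hat\ell}$, and on the exterior of $\hat\ell$ the latter equals $(z,w)\mapsto G_\D(z,w)$ by conformal invariance under $\varphi$. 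Hence $\operatorname{Var}(\alpha_n)\le|L_n|^{-2}\iint_{L_n\times L_n}G_\D(z,w)\,dz\,dw$. Now use $G_\D(z,w)=\tfrac1{4\pi}\log\bigl(1+(1-|z|^2)(1-|w|^2)/|z-w|^2\bigr)$: for $z,w\in L_n$ both $1-|z|^2$ and $1-|w|^2$ are at most $2^{1-n}$, while $L_n$ has diameter $\asymp n^{-1}$ and Lebesgue measure $\asymp n^{-1}2^{-n}$, so a polar-coordinate estimate of $\int_{|u|\le C/n}\log\bigl(1+4\cdot 4^{-n}/|u|^2\bigr)\,du$ yields $\iint_{L_n\times L_n}G_\D\lesssim 8^{-n}$ and therefore $\operatorname{Var}(\alpha_n)\lesssim n^2\,2^{-n}$, which is summable.

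Finally, conditionally on $\Aa_{-a,b}$ and for $n\ge N$, the Gaussian tail bound (or Chebyshev) gives $\P(|\alpha_n(\Gamma^{\Aa_{-a,b}},\hat\ell)|>\delta\mid\Aa_{-a,b})\le \operatorname{Var}(\alpha_n)/\delta^2$ for every $\delta>0$, so the corresponding series converges and Borel--Cantelli yields $\alpha_n(\Gamma^{\Aa_{-a,b}},\hat\ell)\to 0$ a.s.\ conditionally on $\Aa_{-a,b}$, hence a.s.\ unconditionally. Running the identical argument with $A$ in place of $\Aa_{-a,b}$ (using again $(\D\setminus A)\cap\hat\ell=\emptyset$ to get $G_{\D\setminus A}\le G_{\C\setminus\hat\ell}$) gives $\alpha_n(\Gamma^A,\hat\ell)\to 0$ a.s., which is Claim \ref{claim:tec}. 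The variance estimate is where the specific windows matter: the radial scale must tend to $0$ exponentially fast while the angular scale tends to $0$ much more slowly, so that $\iint_{L_n\times L_n}G_\D$ is far smaller than $|L_n|^2$.
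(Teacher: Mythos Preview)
Your approach is essentially the paper's: bound the conditional variance by domain monotonicity of the Green's function, reduce via the conformal map $\varphi$ to $\iint_{L_n\times L_n}G_\D$, estimate this as $2^{-cn}$, and conclude by Borel--Cantelli. The variance computation and the final step are both fine.

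The one point worth correcting is your ``main technical obstacle'': it is not needed. The field $\Gamma^{\Aa_{-a,b}}$ is, conditionally on $\Aa_{-a,b}$, a sum of independent zero-boundary GFFs in the connected components of $\D\setminus\Aa_{-a,b}$; its covariance kernel $G_{\D\setminus\Aa_{-a,b}}$ is defined componentwise and vanishes across components. Hence the variance formula and the domain-monotonicity bound $G_{\D\setminus\Aa_{-a,b}}(\varphi^{-1}z,\varphi^{-1}w)\le G_{\text{ext}(\hat\ell)}(\varphi^{-1}z,\varphi^{-1}w)=G_\D(z,w)$ hold regardless of whether $\varphi^{-1}(L_n)$ lies in a single component, intersects $\Aa_{-a,b}$ (a Lebesgue-null set), or meets several components. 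The only genuinely needed observation is that $\varphi^{-1}(L_n)\subset\D$ for $n$ large, which is immediate since $\varphi^{-1}(L_n)$ shrinks to $p\in\hat\ell\subset\D$; no control on the local geometry of $\Aa_{-a,b}$ near $p$ or on the boundary behaviour of $\varphi$ is required.
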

Given this claim, we have that 
\[\lim_{n\to \infty} \alpha_n(h_{\Aa_{-a,b}},\hat \ell) = \lim_{n\to \infty} \alpha_n(\Gamma) = \lim_{n\to \infty} \alpha_n(h_A)\] 
and thus it indeed suffices to show that $\alpha_n(h_A) \to 2\lambda-a$.
	
Let us thus now see how to construct such an $A$ and then finally prove Claim \ref{claim:tec}. Note that when $a+b = 2\lambda$, we can just take $A=\A{a}$, because by construction of $\A{a}$, $\hat \ell$ almost surely does not hit $1$. 

Assume first that $a\geq2\lambda$. To construct $A$  first define $A^1$ as $\Aa_{-a+\lambda,b}$. As by assumption the loop of $\Aa_{-a,b}$ around $0$ has label $-a$, and as moreover $\Aa_{-a+\lambda,b} \subset \Aa_{-a,b}$, we know that the label of the loop of $\Aa_{-a+\lambda,b}$ surrounding $0$ will be $-a+\lambda$. Then, iterate for $n\in \N$ as follows:
	\begin{itemize}
		\item Inside the connected component $O$ of $D\backslash A^{n}$ containing $0$ explore $\Aa_{-\lambda,\lambda}$ of $\Gamma^{A^{n}}$ and call the union of $A^n$ and the explored set $\tilde A$. If the connected component $O$ of $\tilde A$ containing $0$ has label $-a$ (which happens with probability $1/2$) we stop the procedure and call $A=\tilde A$. Otherwise, the label of this loop is $-a+2\lambda$. In this case we define $A^{n+1}$ as the union between $\tilde A$ and $\Aa_{-\lambda,b+a-2\lambda}(\Gamma^{A^n},O)$. Again, as in the case of $A^1$ above, we know that the label of the loop of $A^{n+1}$ containing $0$ cannot be $b$, and is thus $-a+\lambda$ (see Figure \ref{fig::dependence}).
	\end{itemize}
\begin{figure}[h!]
	\centering
	\includegraphics[width=0.3\textwidth]{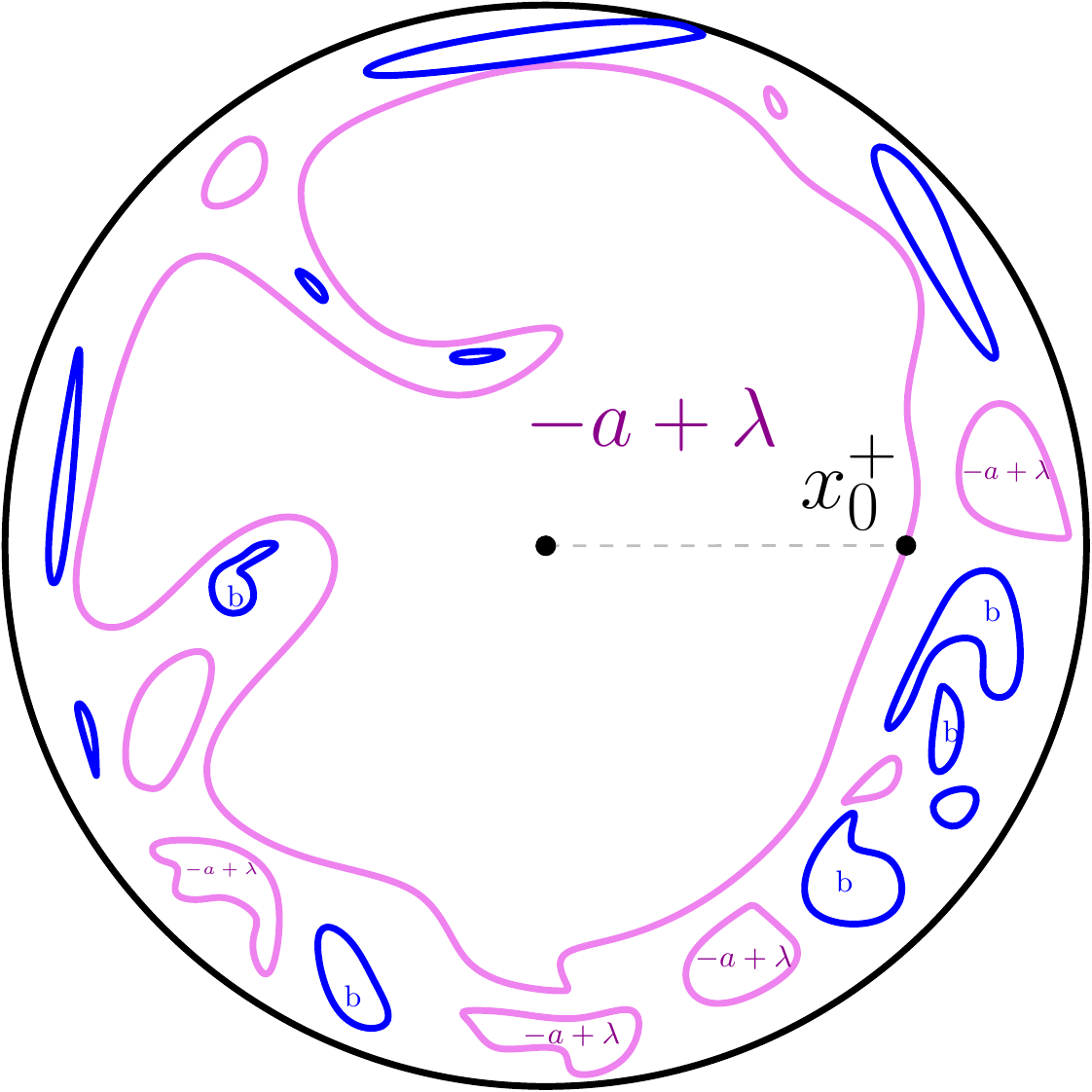}
	\includegraphics[width=0.3\textwidth]{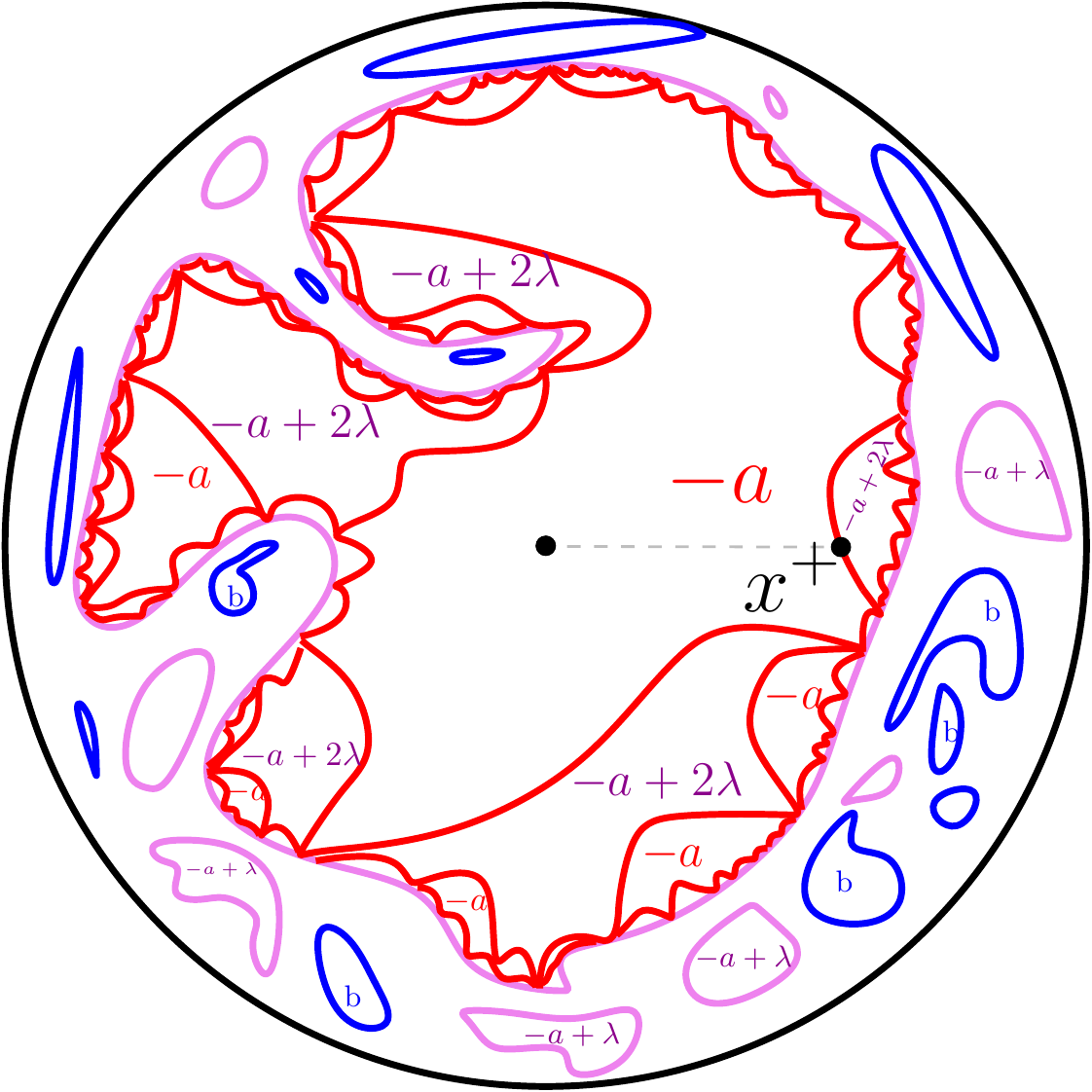}
	\includegraphics[width=0.3\textwidth]{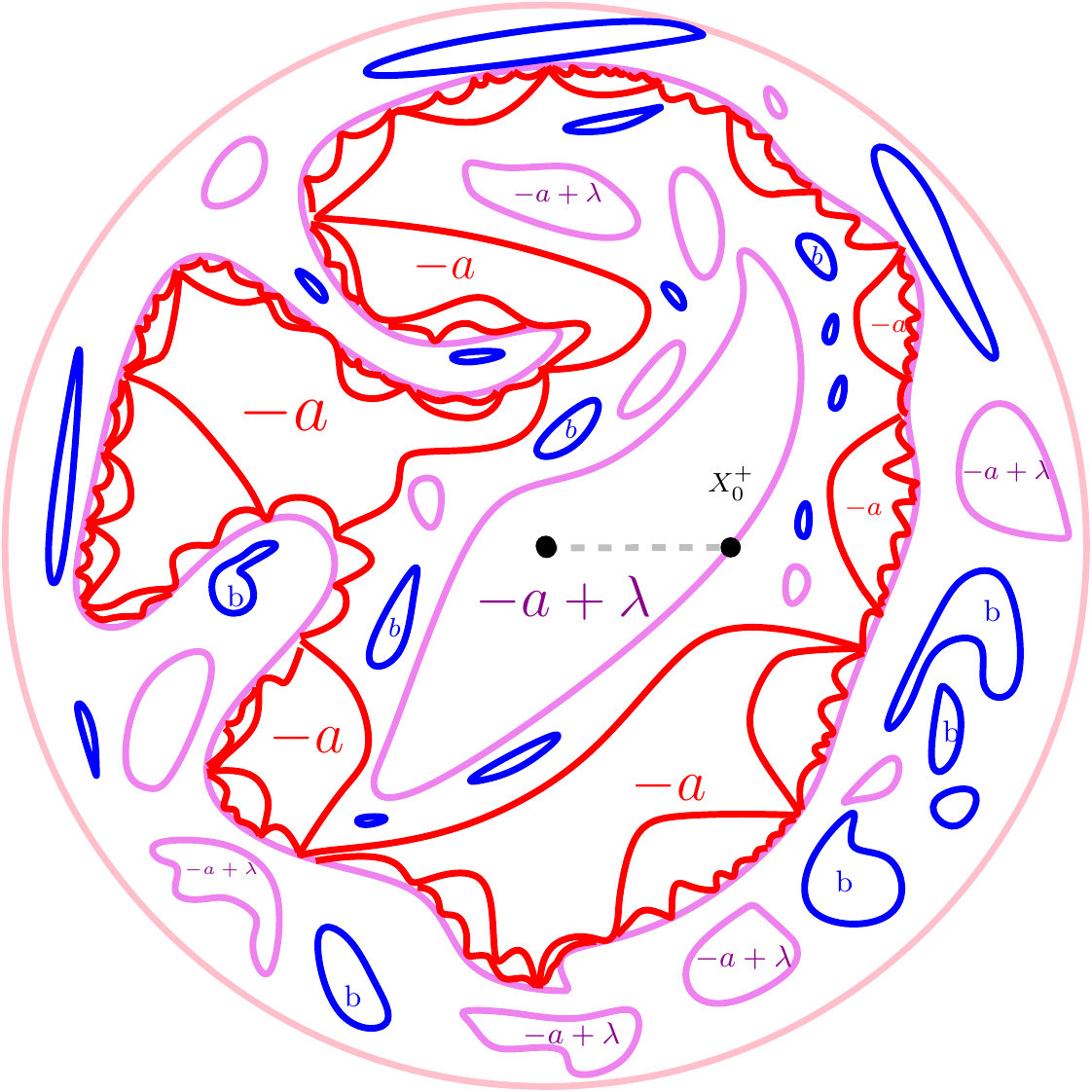}
	\caption{On the left: the first step in the construction of $A$. In the middle: we found the loop with label $-a$ around $0$ in the second step, and we stopped. On the right: we did not finish in the second step, and our iteration brought us back to the same situation (w.r.t the point $0$) as we had in the first step.}
	\label{fig::dependence}
\end{figure}
	
Observe that this iteration finishes a.s. in finite time: on each step of the iteration we finish the construction of the set $A$ with probability $1/2$ independently of the previous steps. Moreover, note that because the label of $\hat \ell$ is $-a$ by assumption, the last step in finishing the construction of $A$ will be exploring $\Aa_{-\lambda, \lambda}$ inside some loop $\ell_0$ labelled $-a+\lambda$ surrounding $0$. 

Now, denote by $x^+$ the smallest intersection point of $\R^+$ and the loop $\hat \ell$. We claim that $x^+ \notin \ell_0$. Indeed, let further $x_0^+$ be the smallest intersection point of $\R^+$ and the loop $\ell_0$. Observe that $x^+ \in \ell_0$ iff $x^+ = x^+_0$. But we know from the construction of $\Aa_{-\lambda,\lambda}$ (as every loop is just a union of finitely many level line segments), that the loop surrounding $0$ of $\Aa_{-\lambda,\lambda}$ does not hit any fixed boundary point. In particular it cannot hit $x_0^+$ almost surely, and thus $x^+ \notin \ell_0$ almost surely. Hence we know that $x^+$ remains in the interior of the loop $\ell_0$. Thus the value of $h_A$ on the other side of $\hat \ell$ is equal to $-a+2\lambda$ in a neighbourhood around $x^+$ (see Figure \ref{fig::dependence} middle part). This implies that $\alpha_n(h_A,\hat \ell) \to \-a+2\lambda$.

If on the other hand $a<2\lambda$, we can first sample $\tilde A:=\A{a}$. If the loop surrounding $0$ is $\hat \ell$ we take $A = \tilde A$; otherwise, the law of $\Aa_{-a,b}$ inside $\tilde A$ is that of $\Aa_{-2\lambda,b-2\lambda+a}$ inside the connected component of $\tilde A$ surrounding $0$. Thus we are back in the case $a\geq 2\lambda$.

To prove the lemma, it finally remains to verify the Claim \ref{claim:tec}.
	\begin{proof}[Proof of Claim \ref{claim:tec}]
		First, note that the variance of $\alpha_n(\Gamma^A,\hat \ell)$ and the variance of $\alpha_n(\Gamma^{\Aa_{-a,b}},\hat \ell)$ are both smaller than the variance of $\alpha_n(\Gamma^{\hat \ell},\hat \ell)$ where $\Gamma^{\hat\ell}$ is a zero boundary GFF in the component of $\C \backslash \hat\ell$ containing $\infty$. Indeed, this just follows from the fact that for a local set $A$, $\Gamma^A$ is equal to the sum of independent zero boundary GFFs in each connected component of the complement of $A$, and the Green's function is monotonously increasing w.r.t. to the underlying domain.
But by definition of $\alpha_n(\Gamma^{\hat\ell},\hat \ell)$, the variance of $\alpha_n(\Gamma^{\hat \ell},\hat \ell)$ is equal to \[Leb(L(n^{-1},2^{-n}))^{-2}\int_{L(n^{-1},2^{-n})^2}G_{\D}(z,w)dzdw,\]
which by a direct calculation can be bounded by $2^{-cn}$ for some deterministic constant $c >0$. By Markov inequality and Borel-Cantelli this proves the claim.\end{proof}
\end{proof}

\section{Labelled CLE$_4$ and the approximate Lévy transforms}  \label{aprox CLE_4}

Let $B_t$ be a standard Brownian motion, $I_t$ its running infimum process and $L_t$ its local time. The Lévy theorem for the Brownian motion states that the pair $(B_t - I_t, I_t)$ has the same law as $(|B_t|, L_t)$. This result provides several identities in law for the exit time. For example, the symmetric exit time $\sigma_{-a,a}$ from the interval $[-a,a]$ has the same law as the first time $\tau_a$ such that the Brownian motion makes a positive excursion of size $a$ above its current infimum.

In this section, we discuss an analogue of such an identity for the GFF by discussing another coupling between the CLE$_4$ and the GFF, introduced in \cite{WaWu}. In this coupling, the associated harmonic function is constant inside each loop $\ell$, and it is given by $2\lambda - t_\ell$, where $t_\ell$ denotes a time-parameter of the construction.In a subsequent article \cite{ALS3}, we will connect this to the Lévy transform of the metric graph GFF introduced in \cite{LuW}.

This section is organised as follows: first, we introduce the labelled CLE$_4$ and state its coupling with the GFF. Then, we revisit this coupling using an approximation via a decreasing sequence of thin local sets $(\B_r)_{0<r<\lambda}$ that are constructed using TVS and have the property that for $0<r<\lambda$, $\B_r$ has the same law as $\Aa_{-2\lambda,2\lambda-r}$. After that, we relate the labels of loops of $\B_r$ with their distance to the boundary in the graph $G_p(\B_r)$. Finally, we show that a.s. as $r\to 0$, $\B_r\searrow \B_0$, where $(\Gamma,\B_0)$ is the coupling of the labelled CLE$_4$ with the GFF, first proved in \cite{WaWu}.

\subsection{Labelled CLE$_4$: Definition and coupling with the GFF}\label{intro label CLE}

Labelled CLE$_4$ in $\D$, introduced in \cite{WernerWu}, corresponds to a Markovian exploration of CLE$_4$ loops, that keeps track of the time when each loop is discovered. We will here give a brief idea of the construction on $\D$ and refer to \cite{WernerWu} or \cite{WaWu} for more details.

The exploration discovers the bubbles of SLE$_4$ pinned at the boundary of the domain discovered so far. More precisely, we consider the SLE$_4$ bubble measure $\mu$ pinned at the boundary point $1$ introduced in \cite{SW}. This is the unique infinite measure on simple loops $\ell$ in $\bar \D$ which satisfies the following properties:
\begin{itemize}
\item $\mu$ almost everywhere, all loops touch the boundary only at 1.
\item If we denote by $\mu_\eps$ the measure on loops with a radius bigger than $\epsilon>0$, then $\mu_\eps$ has finite total mass.
\item When we normalize $\mu_\eps$ to be a probability measure, it satisfies the following Markov property: when we explore the loop $\eta$ from point $1$ until the time $\tau$ when it first exits the disk centred at $1$ of radius $\eps$, then the remaining part is given a chordal SLE$_4$ from $\eta_\tau$ to $1$ in $\D \backslash \eta$.
\item The scaling is fixed by saying that the mass on loops surrounding $0$ is equal to $1$.
\end{itemize}
See Section 6 of \cite{SW} for the existence and uniqueness of this measure. It can be explicitly constructed by properly normalizing the measure on chordal SLE$_4$ curves from $e^{i\eps}$ to $1$ as $\eps \to 0$; the uniqueness follows as the Markov property for all $\mu_\eps$ characterizes the loop on measures surrounding any point $z$ up to arbitrarily small initial segments.
  	
Now define the measure $M = \mu \otimes \omega$, where $\omega$ is the harmonic measure on $\partial \D$ seen from $0$. A key property of this measure is its invariance under Mobius transformations, see Lemma 6 of \cite{WernerWu}. In particular, this means that $0$ is not a distinguished point w.r.t. $M$. 

The exploration process is then given by a Poisson point process (PPP) $E_t$ with the intensity $M$ times the Lebesgue measure in $\R_+$, i.e. a PPP of loops pinned uniformly over the boundary. Here, loops are naturally ordered and each loop $\ell$ comes with a time label $t_\ell$. To obtain CLE$_4$ we embed this exploration inside $\D$ iteratively. 

For example, to define the exploration of the loop surrounding $0$, we consider $\tau_0$, the first time that $E_t$ contains a loop containing $0$. We then look at all the loops $(\ell_t: t \leq \tau_0)$ larger in $\epsilon$ in diameter. As there are finitely many of them, $\ell_{t^\epsilon_1},..., \ell_{t^\epsilon_{n^\epsilon}}$ we can add them one by one: more precisely, we define $D_0 = \D$ and inductively set $D_i$ to be the component of $D_{i-1} \backslash \phi_i(\ell_{t_i^\epsilon})$ containing the origin, where $\phi_i$ is the conformal map $\D \to D_{i-1}$ fixing the origin. We can then define $L_0^\epsilon$ as the boundary of $D_{n^\epsilon-1} \backslash D_{n^\epsilon}$. It can be shown that as $\epsilon \to 0$, $L_0^\epsilon$ converges and that the resulting loop $L_0$ has the law of the CLE$_4$ loop around zero. 

The result of the last paragraph can be strengthened: one can show that, in fact, the whole exploration process converges. One can define the exploration of the loop around any other point $z \in \D$ by conformal invariance. By coupling the exploration processes for any two points $z, w$ together such that they agree until the first time they are separated, and evolve independently thereafter, we obtain the symmetric exploration of CLE$_4$. This process is target-independent and each loop comes with a label indicating its discovery time in the exploration. 

Theorem 1.2.1 of \cite{WaWu} shows a way to couple labelled CLE$_4$ with the GFF. Let us rephrase it in our framework.
\begin{thm}[Theorem 1.2.2 of \cite{WaWu}]\label{thmwawu}
There exist a coupling $(\Gamma, (\ell, t_{\ell})_{\ell \in I})$ between a GFF and a labelled CLE$_4$, such that $\tilde B=\overline{\bigcup_\ell \ell}$ is a thin local set of $\Gamma$ whose harmonic function is constant inside every loop $\ell$ with label given by $2\lambda-t_\ell$.
\end{thm}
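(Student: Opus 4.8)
The statement is Theorem~1.2.2 of \cite{WaWu}, so one option is to quote it; the plan instead is to re-derive it in the remainder of this section as a limit of explicit thin local sets built from two-valued sets, which simultaneously produces the approximations $\B_r$. Concretely, the first step will be to construct, for each small $r>0$, a thin local set $\B_r$ of $\Gamma$ --- coupled to $\Gamma$ via an iterated-ALE exploration rather than via the two-valued set coupling --- whose underlying set has the law of $\Aa_{-2\lambda,2\lambda-r}$ and whose harmonic function is constant on every loop $\ell$, equal to $2\lambda-r\,d_{\B_r}(\ell,\partial\D)$, where $d_{\B_r}(\ell,\partial\D)$ denotes the graph distance from $\ell$ to the boundary in the point-connectivity graph $G_p(\B_r)$. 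This distance is a.s.\ finite because $2\lambda+(2\lambda-r)<4\lambda$ puts us in the connected phase, so that $G_p(\B_r)$ is connected by Theorem~\ref{mainthm}~(2); the construction and these properties of $\B_r$ are exactly the forthcoming Propositions~\ref{prop:: existence and uniqueness Br} and~\ref{prop:lint}. Having these, the second step will be to couple the family $(\B_r)_r$ monotonically in $r$, pass to the limit $\B_0:=\lim_{r\to0}\B_r$, and identify $\B_0$ with the labelled CLE$_4$ coupled to $\Gamma$.

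For the construction of $\B_r$ the idea is to iterate arc-loop ensembles: explore an ALE; then inside every loop whose label is not yet terminal, shift the field by its current harmonic function, lower the running boundary value by $r$, and explore a further ALE; stop a loop once its running value reaches the bottom $-2\lambda$ of the window; and finally take the closed union. Every piece used here is a BTLS whose harmonic function is bounded by an absolute constant, so by Proposition~3 of \cite{ASW} (compare Remark~\ref{rem:thin}) its Minkowski dimension on compacts is bounded away from $2$; hence $\B_r$ is thin, it is a local set by iterated use of Lemma~\ref{BPLS}~(2), and $|h_{\B_r}|\le2\lambda$. Identifying the law of the \emph{set} $\B_r$ with that of $\Aa_{-2\lambda,2\lambda-r}$, and the value of the harmonic function on a loop with $2\lambda-r\,d_{\B_r}(\ell,\partial\D)$, will rely on the facts that every loop is point-connected to a boundary-touching loop (Claim~\ref{con to boun}), that the boundary-touching loops are precisely the loops of an ALE (Lemma~\ref{keepingloops}, Remark~\ref{ALE}), and on an argument in the spirit of the uniqueness part of Proposition~\ref{cledesc2}.

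For the limit $r\to0$: lowering $r$ only subdivides existing loops with additional level lines, so the sets $\B_r$ can be coupled to be monotone; since $\B_r\cup\partial\D$ is connected for every $r$ and $|h_{\B_r}|\le2\lambda$, Lemma~\ref{BPLS}~(3) then shows that $\B_0$ is a local set with $\Gamma_{\B_r}\to\Gamma_{\B_0}$ in probability, and the uniform bound makes $\B_0$ a $2\lambda$-BTLS, in particular thin. Since each $\B_r$ has the set-law of $\Aa_{-2\lambda,2\lambda-r}$ and these converge to the set-law of $\Aa_{-2\lambda,2\lambda}=$ CLE$_4$, the set $\B_0$ is a CLE$_4$ carpet, i.e.\ $\B_0=\overline{\bigcup_\ell\ell}$ for a CLE$_4$. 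It then remains to see that the constant value taken by $h_{\B_0}$ on a loop $\ell$ is the labelled-CLE$_4$ discovery time $2\lambda-t_\ell$; for this one checks that, in the conformally natural parametrisation, $h_{\B_r}$ along the loop around a fixed point is a discrete approximation of the Brownian motion of Proposition~\ref{BMinterior}, and that $r\,d_{\B_r}(\ell,\partial\D)$ converges as $r\to0$ to the Poissonian time at which $\ell$ is discovered in the SLE$_4$-bubble exploration of \cite{WernerWu}, so that in the limit $h_{\B_0}$ equals $2\lambda-t_\ell$ on $\ell$.

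The limiting step is routine once Lemma~\ref{BPLS} and the a priori $2\lambda$-boundedness are available; the real work lies in the two statements about $\B_r$. First, one must show that the iterated-ALE set $\B_r$ has \emph{exactly} the law of the two-valued set $\Aa_{-2\lambda,2\lambda-r}$ even though $\B_r$ carries the different, distance-encoding harmonic function, and that this harmonic function is measurable with respect to the set. Second, one must identify the rescaled combinatorial distance $r\,d_{\B_r}(\ell,\partial\D)$ with the continuous, conformally invariant discovery time $t_\ell$ as $r\to0$; this is the main obstacle, and it is why the argument has to be carried out through the explicit construction: it requires matching the ``generations'' of the iterated-ALE exploration with the Poissonian bubble exploration of \cite{WernerWu}, using in particular that each loop is completed after finitely many generations.
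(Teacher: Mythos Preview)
Your plan is essentially the route the paper takes in the remainder of Section~6: the paper does not prove Theorem~\ref{thmwawu} independently but cites it from \cite{WaWu}, then builds the approximations $\B_r$ (Sections~\ref{Br}--\ref{distances}) and passes to the limit (Section~\ref{B0}) exactly as you outline, with the identification of the limit as labelled CLE$_4$ sketched in Section~6.4.3 and the details deferred to Section~3 of \cite{WaWu}. So the architecture matches; you are also right that the genuine obstacle is the identification of the labels with the Poissonian discovery times $t_\ell$, and the paper does not claim to resolve this without \cite{WaWu} either.

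Two places where your write-up would need repair. First, your monotonicity argument ``lowering $r$ only subdivides existing loops with additional level lines'' is not how the paper proceeds and is not obviously correct: $\B_r$ is coupled to $\Gamma$ via an iterated $\A{r}$ exploration, \emph{not} via the two-valued-set coupling, and there is no a~priori reason why iterating $\A{r'}$ for $r'<r$ refines iterating $\A{r}$. The paper (Section~\ref{monBr}) proves only the dyadic inclusion $\B_r\subset\B_{r/2}$, and the argument is non-trivial---it goes through Lemma~\ref{keepingloops} and Corollary~\ref{recons} to match the boundary-touching $-r$ loops in the two constructions. Second, ``the uniform bound makes $\B_0$ a $2\lambda$-BTLS, in particular thin'' is circular: by definition a BTLS is a thin local set with bounded $h_A$, so thinness must come first. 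The paper instead obtains thinness of $\B_0$ from the fact that it has the law of CLE$_4$ and hence Minkowski dimension strictly below $2$. (A smaller slip: in your description of the iteration the terminal loops are those with label $2\lambda-kr$, not those that ``reach the bottom $-2\lambda$''; the non-terminal loops carry labels $-kr$ and are the ones in which one iterates further.)
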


From the description above, it follows that if we have explored the loops $\ell_t$ up to any fixed time $T$, then the remaining loops can be sampled by considering an independent labelled CLE$_4$ inside each connected component of the complement of $\overline{\cup_{t \leq T} \ell_t}$. This should remind the reader of the local sets and thus in order to reinterpet the coupling of the GFF and the labelled CLE$_4$, we introduce the following local set: 

\begin{defn}[Lévy transform of $\Aa_{-2\lambda, 2\lambda}$]\label{defB0} Denote by $\B_0$ the thin local set of $\Gamma$ that has the following properties
	\begin{enumerate}
		\item $h_{\B_0}$ is constant inside each loop of $\B_0$ with labels in $\{2\lambda - t: t \geq 0\}$.
		\item  For all dyadic $s\geq 0$, the closed union of the loops of $\B_0$ with label greater or equal $2\lambda-s$, $\B_0^s$, is a BTLS and such that $h_{\B_0^s}\in \{-s\}\cup \{2\lambda-t: 0\leq t\leq s \}$.
	\end{enumerate} 
\end{defn}

This definition requires three clarifications.
\begin{rem}
The usage of the definite article in the definition above is justified by Proposition \ref{prop:labelCLE} below that proves the uniqueness of $\B_0$.
\end{rem}

\begin{rem}\label{remens} 
In fact one can see that the second condition holds for any fixed $s \in \R_+$: indeed, for a $s \in \R_+$ take dyadics $s_n$ such that $s_n \downarrow s$. Then $C^{s_n} \supset C^{s_{n+1}}$ are nested decreasing local sets and thus a.s. converge to a local set $\tilde C^s$. One can see that if we define the set $C^s$ as above, it is also given by the same limit, thus $\tilde C^s = C^s$. As the inclusion property is also clear (w.r.t dyadics), it remains to just show that the local set has the given values for the harmonic function. But this is clear for rational $z$ inside the components of the complement that have $h_{\B_r}(z)\geq 2\lambda - s$, as their value never changes. And for any other rational $z$ that is in a component with $h_{\B_r}(z) < 2\lambda - s$, there is some $s_{n_0}$ such that $h_{\B_r}(z) < 2\lambda - s_{n_0}$ and thus $h_{C^{s_n}} = -s_n$ for all $n \geq n_0$ and thus converges a.s. to $-s$.
\end{rem}

\begin{rem}
It will be shown in \cite{ALS3} that $\B_0$ corresponds to the image under the Lévy transform of $\Aa_{-2\lambda,2\lambda}$. This implies that the labels $t_\ell$ may be interpreted as sort of ``local times''. However, a key step is still missing in completing the Lévy transform picture. Namely, although we prove that the labels are a measurable function of the underlying GFF, we do not prove that these labels are measurable w.r.t. to the underlying set. This measurability problem was first presented in Section 5.1 of \cite{WaWu}, and we hope that the results presented here make a step forward in its proof.
\end{rem}

Theorem \ref{thmwawu} implies the existence of such a local set. One of the key results of this section is to prove the uniqueness of such a coupling: 

\begin{prop}\label{prop:labelCLE}
Almost surely for each $\Gamma$ there exists a unique local set $\B_0$ satisfying Definition \ref{defB0}. Moreover, if the loops of $\B_0$ are indexed by $I$ and the labels of each loop of $\B_0$ are written as $h_\ell = 2\lambda - t_\ell$, then $(\ell,t_\ell)_{\ell \in I}$ has the law of labelled CLE$_4$. 
\end{prop}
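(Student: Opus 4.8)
The plan is to establish the two claims---existence and uniqueness of $\B_0$, and the identification of its labels with a labelled CLE$_4$---by a compactness/limiting argument along the family $\B_r$ together with a ``layer-peeling'' characterisation of $\B_0$ via its truncations $\B_0^s$. Existence already follows from Theorem \ref{thmwawu}: the local set $\tilde B = \overline{\bigcup_\ell \ell}$ associated to the labelled CLE$_4$ coupling has constant labels $2\lambda - t_\ell$ inside each loop, and the truncations $\tilde B^s$ (closed union of loops with $t_\ell \le s$, which is a locally finite collection since CLE$_4$ is locally finite and labels are lower-semicontinuous along the exploration) are BTLS: they are finite unions of SLE$_4$-bubble boundaries, hence thin with bounded harmonic function taking the stated values, by the Markovian structure of the exploration. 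So the content is uniqueness and the law of the labels.

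For uniqueness, the key observation is that property (2) of Definition \ref{defB0} pins down each truncation $\B_0^s$ as a local set with a \emph{two-valued-type} characterisation relative to the field obtained after exploring the larger-label loops. Concretely, I would argue by induction on dyadic scales: having explored $\B_0^{s}$, inside each component $O$ of $\D \setminus \B_0^s$ with harmonic value $-s$, the continuation of $\B_0$ must, by property (2) applied at the next dyadic level $s' = s + 2^{-n}$, be a thin local set whose harmonic function (relative to the constant $-s$ on $\partial O$) takes values in $\{-2^{-n}\} \cup \{2^{-n} - t : 0 \le t \le 2^{-n}\}$, i.e.\ a set realising a specific interval of heights; a BTLS of this type, with the monotonicity/consistency forced by (2) across all dyadics, is unique by the uniqueness theory for TVS and their $u$-boundary-condition variants (Proposition \ref{cledesc2} and Remark \ref{rem: 2 boundary values}). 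Since $\B_0 = \overline{\bigcup_s \B_0^s}$ (as $h_{\B_0}$ is bounded below by $-s$ off $\B_0^s$ for every $s$, so points not in any $\B_0^s$ would have $h_{\B_0} = -\infty$, impossible), uniqueness of all truncations gives uniqueness of $\B_0$. The delicate point is to ensure that the two-valued-type characterisation genuinely applies at each stage, i.e.\ that the boundary data created by the already-explored loops is of the admissible piecewise-constant-changing-twice form in each component; this is where I would lean on the structure established in Sections 3--4 and Remark \ref{rem: gen 4.1}.

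To identify $(\ell, t_\ell)_{\ell \in I}$ with a labelled CLE$_4$, I would use the limiting relation between $\B_r$ and $\B_0$. By the construction of $\B_r$ (to be given just before this proposition) one has $\B_r \searrow \B_0$ a.s.\ as $r \to 0$, and $\B_r \overset{d}{=} \Aa_{-2\lambda, 2\lambda - r}$; on the CLE$_4$-exploration side there is a parallel statement that the ``$r$-discretised'' labelled exploration---where one only records loops discovered before their label crosses a multiple of $r$, or equivalently runs the exploration in increments of local time $r$---converges to the full labelled CLE$_4$. The strategy is to show that the $\B_r$ coupling \emph{is} this $r$-discretised labelled exploration: each $\B_r$ is built by iterating ALEs/TVS in loops that have not yet reached label $-a = -2\lambda$, and the height decrement $r$ at each layer plays the role of a local-time increment, so that by Proposition \ref{BMinterior} (the interior point sees a Brownian motion in the natural parametrisation) the label of the loop around a fixed point $z$, namely $2\lambda - r\, d(\ell_z, \partial\D)$ in the graph metric $G_p(\B_r)$, converges as $r \to 0$ to $2\lambda$ minus the first time the driving Brownian motion makes an excursion of ``size'' recording the CLE$_4$ loop---exactly the L\'evy/Pitman picture described in the introduction to this section, and matching the time-parameter of the Werner--Wu exploration. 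Passing to the limit and using the already-established uniqueness of $\B_0$, the limit coupling is forced to coincide with the one from Theorem \ref{thmwawu}, hence the labels have the labelled CLE$_4$ law.

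The main obstacle, I expect, is the last step: controlling the joint convergence of the \emph{labels} (not just the sets) as $r \to 0$ and matching the combinatorial graph-distance parametrisation of $\B_r$ with the continuous local-time parametrisation of the Werner--Wu exploration. The set convergence $\B_r \searrow \B_0$ is comparatively soft (Lemma \ref{BPLS} on decreasing local sets), but showing that $r\, d_{G_p(\B_r)}(\ell, \partial\D) \to t_\ell$ for a.e.\ loop requires a Brownian-excursion-theory input---identifying the rescaled graph distance with a discretised running-infimum/local-time functional and invoking a Donsker-type limit---together with enough uniform control (e.g.\ via Proposition \ref{BMinterior} applied simultaneously at a countable dense set of interior points) to upgrade this to an almost sure statement about all loops at once. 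A secondary technical point is verifying that the ``two boundary values changing twice'' uniqueness (Remark \ref{rem: 2 boundary values}) is robust enough to run the dyadic induction for uniqueness of $\B_0$ without accumulating error across infinitely many scales; I would handle this by the same nested-decreasing-local-set argument already sketched in Remark \ref{remens}.
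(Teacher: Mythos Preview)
Your uniqueness argument has a genuine gap. You propose to pin down $\B_0^{s'}\setminus\B_0^s$ inside each component $O$ with label $-s$ by invoking TVS uniqueness (Proposition~\ref{cledesc2}, Remark~\ref{rem: 2 boundary values}). But the set you need to characterise there is \emph{not} two-valued: its harmonic function takes values in $\{-2^{-n}\}\cup\{2\lambda-u:0\le u\le 2^{-n}\}$, an uncountable range. Proposition~\ref{cledesc2} says nothing about such sets, and Remark~\ref{rem: 2 boundary values} concerns piecewise-constant \emph{boundary data}, not a continuum of interior labels. The ``delicate point'' you flag is not a matter of boundary data fitting the admissible form; it is that there is no uniqueness theorem available for the object you are trying to identify at each dyadic step. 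So the induction does not start, let alone close.

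The paper's route avoids this by comparing an arbitrary $\tilde\B_0$ not with itself at finer dyadic scales but with the approximants $\B_r$, whose uniqueness has already been established (Proposition~\ref{prop:: existence and uniqueness Br}). Concretely, one shows (Lemma~\ref{lemunB0}) that $\B_r^k\subset\tilde\B_0^{kr}$ for every $k$, by completing $\tilde\B_0^r$ to the genuine two-valued set $\Aa_{-r,2\lambda}$ and then using Corollary~\ref{recons} to recognise $\A{r}$ as the boundary-touching loops; iterating gives $\B_r\subset\tilde\B_0$ for every dyadic $r$, hence $\B_0\subset\tilde\B_0$. The reverse inclusion comes from the label inequality in Lemma~\ref{lemunB0} together with the comparison principle of Lemma~9 in \cite{ASW}, not from any TVS uniqueness applied to $\tilde\B_0$ directly.

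Two smaller points. First, $\B_r\subset\B_{r/2}$, so the convergence $\B_r\to\B_0$ is along an \emph{increasing} family (Lemma~\ref{BPLS}(3)), not a decreasing one as you write. Second, your route to the labelled CLE$_4$ law via convergence of rescaled graph distances is unnecessary once uniqueness is in hand: since the coupling of Theorem~\ref{thmwawu} satisfies Definition~\ref{defB0}, uniqueness forces $\B_0$ to coincide with it, and the law of the labels follows immediately. The paper does record that $h_{\B_r}\to h_{\B_0}$ gives the interpretation $t_\ell=\lim_{r\to 0} r\,d_{G_p(\B_r)}(\ell,\partial\D)$, but this is a corollary, not the mechanism of proof.
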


In order to prove this proposition, we will approximate $\B_0$ by certain local sets that already appeared in \cite{WaWu}, but whose handling is considerably simplified in our setting.

\subsection{Approximate L\'evy transform}\label{Br}

A natural way to approximate the first time $\tau_a$ when a standard Brownian motion makes a positive excursion of size $a$ off its minimum is as follows:
\begin{itemize}
\item Let $\tau^{r,1} = \sigma_{a-r,-r}$ be the first time that the BM exits the interval $[-r,a-r]$.
\item If $B_{\tau^{r,1}} = a-r$, set $\tau^r_a = \tau^{r,1}$ and otherwise define iteratively $\tau^{r,2} = \sigma_{a-2r,-2r}$ for $B_t, t\geq \tau^{r,1}$. 
\item Iteratively, if the BM exits from $a-2r$, set $\tau^r_a = \tau^{r,2}$ and else continue. 
\end{itemize}

Note that the stopping time $\tau^{r}_a$ comes with an interesting decoration: the end-value $B_{\tau^{r}_a} = a - kr$. One can think of $kr$ as an approximation of the running infimum.

It is also interesting to observe that $\tau^r_a$ has the law of $\sigma_{-a,a-r}$, the fist exit time by a BM from the interval $[-a,a-r]$. This just follows by reflecting the BM at times $\tau^{r,n}$. Moreover, the $k$ appearing before corresponds exactly to the number of crossings of this new BM of the interval $[0,-r]$ before the stopping time $\sigma_{-a,a-r}$, i.e. to a (scaled) approximation of its local time at zero.
In other words, we have a way of approximating the stopping time $\sigma_{-a,a}$ together with the local time at zero. Our aim in this section is to do the same for the 2D GFF:

\begin{defn}[Approximate Lévy transform of $\Aa_{-2\lambda,2\lambda}$]\label{def:br} For any $r\in (0,2\lambda)$, we denote by $\B_r$ the thin local set of $\Gamma$ that has the following properties:
	\begin{enumerate}
		\item $h_{\B_r} \in \{2\lambda - kr: k \in \N\}$.
		\item  For all $j\geq 0$, the closed union of the loops of $\B_r$ with label greater or equal $2\lambda-jr$, $\B_r^j$, is a BTLS  such that $h_{\B_r^j}\in \{-jr\}\cup \{2\lambda-kr: k\in \N,k\leq j \}$.
			\end{enumerate} 
\end{defn}

The reason for using the definite article in the definition comes again from a uniqueness claim, incorporated in the main result of the next few subsections.
\begin{prop}\label{prop:: existence and uniqueness Br}
The thin local sets from Definition \ref{def:br} exist and have the same law as the sets $\Aa_{-2\lambda, 2\lambda - r}$. They are unique, measurable w.r.t the GFF and monotone in the sense that $\B_r \subset \B_{r/2}$.	
\end{prop}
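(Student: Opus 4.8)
The plan is to build $\B_r$ by the iterative procedure suggested by the Brownian construction of $\tau^r_a$ recalled before Definition \ref{def:br}, to identify its law by a Lévy-type reflection argument, and to read off uniqueness, measurability and monotonicity from the uniqueness of two-valued sets. \emph{Existence.} Set $\B_r^1:=\Aa_{-r,2\lambda-r}(\Gamma)$ (a TVS, since $r+(2\lambda-r)=2\lambda$), and, given $\B_r^j$, inside every connected component $O$ of $\D\setminus\B_r^j$ labelled $-jr$ explore $\Aa_{-r,2\lambda-r}(\Gamma^{\B_r^j},O)$, letting $\B_r^{j+1}$ be the closed union of $\B_r^j$ with these newly explored sets. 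Each $\B_r^j$ is then a finite iteration of level lines with boundary data bounded by an absolute constant, hence thin with a uniform (in $j$) Minkowski-dimension bound $<2$, exactly as in Remark \ref{rem:thin}; the $\B_r^j$ are increasing; and inside such an $O$ every loop of $\B_r^{j+1}$ has label $-(j+1)r$ or $2\lambda-(j+1)r$. Parametrising the underlying exploration from the viewpoint of an interior point $z$ and using Proposition \ref{BMinterior}, the sequence $j\mapsto h_{\B_r^j}(z)$ is a Brownian motion sampled at the successive stopping times of the construction of $\tau^r_{2\lambda}$; in particular, at each step the probability of landing on a ``done'' label $2\lambda-kr$ equals $r/2\lambda>0$ independently of the past, so a.s. $z$ lies inside a ``done'' loop after finitely many steps. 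Hence $\B_r:=\overline{\bigcup_j\B_r^j}$ is, by the uniform Minkowski bound together with Lemma \ref{BPLS}(3), a thin local set with $h_{\B_r}\in\{2\lambda-kr:k\in\N\}$, and $\B_r^j$ is exactly the closed union of the loops of $\B_r$ with label $\geq2\lambda-jr$ (the remaining components being filled in by the ALEs living inside them). Thus properties (1)--(2) of Definition \ref{def:br} hold, and $\B_r$ is measurable w.r.t. $\Gamma$ since TVS are (Proposition \ref{cledesc2}).

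\emph{Law.} To obtain $\B_r\overset{d}{=}\Aa_{-2\lambda,2\lambda-r}$ we run the same construction while simultaneously building a reflected field $\tilde\Gamma$: each time we are about to continue inside a component $O$ of $\tilde\Gamma$-level $v$, we reflect the GFF-part of $\tilde\Gamma$ on $O$, i.e. replace $\tilde\Gamma|_O$ by $2v-\tilde\Gamma|_O$. This mirrors Lévy's reflection of Brownian motion at the times $\tau^{r,n}$, and, as in the reflection construction of $\Aa_{-2\lambda,2\lambda}$ in \cite{MS,ASW}, at each finite stage $\tilde\Gamma$ still has the conditional law of a GFF given the explored set; hence $\tilde\Gamma\overset{d}{=}\Gamma$. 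Tracking the levels as in the Brownian picture --- which lives in $[-r,2\lambda-r]$ on odd steps and in $[-2\lambda,0]$ on even steps, reaching the endpoints $-2\lambda$ and $2\lambda-r$ of $[-2\lambda,2\lambda-r]$ precisely at the ``done'' steps --- one finds that with respect to $\tilde\Gamma$ every loop of $\B_r$ has label $-2\lambda$ or $2\lambda-r$, and $|h^{\tilde\Gamma}_{\B_r^j}|\leq2\lambda$ for every $j$. Therefore $\B_r$ is a thin local set of $\tilde\Gamma$ satisfying \hyperlink{tvs}{(\twonotes)} with levels $-2\lambda$ and $2\lambda-r$, so $\B_r=\Aa_{-2\lambda,2\lambda-r}(\tilde\Gamma)$ by the uniqueness of TVS (Proposition \ref{cledesc2}), and the claim on the law follows.

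\emph{Uniqueness, measurability, monotonicity.} If $\B_r$ satisfies Definition \ref{def:br}, then property (2) with $j=1$ says $\B_r^1$ is a thin local set with $h_{\B_r^1}\in\{-r,2\lambda-r\}$, so $\B_r^1=\Aa_{-r,2\lambda-r}(\Gamma)$ by uniqueness of TVS; inductively, inside a component $O$ of $\D\setminus\B_r^j$ labelled $-jr$, property (2) ``for all $j$'' forces every loop of $\B_r$ in $\bar O$ of label $\geq2\lambda-(j+1)r$ to have label exactly $2\lambda-(j+1)r$, whence $\B_r^{j+1}\cap\bar O$ is the thin local set of $\Gamma^{\B_r^j}|_O$ with harmonic function (relative to $\Gamma^{\B_r^j}|_O$) in $\{-r,2\lambda-r\}$, i.e. $\Aa_{-r,2\lambda-r}(\Gamma^{\B_r^j},O)$. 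So all $\B_r^j$, hence $\B_r=\overline{\bigcup_j\B_r^j}$, are determined by --- and thus measurable functions of --- $\Gamma$. For $\B_r\subseteq\B_{r/2}$ one compares the two iterative constructions: the $(r/2)$-grid of continue-levels refines the $r$-grid, so the $\B_{r/2}$-exploration resolves every point no later than the $\B_r$-exploration and never enters a ``done'' loop of the latter; made precise by an induction along the stages (comparing $\B_r^j$ with $\B_{r/2}^{2j}$ up to loops already completed) and a passage to the limit.

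\emph{Main obstacle.} The heart of the argument is the reflection step: one has to set $\tilde\Gamma$ up so that it is genuinely a GFF --- the usual, but non-trivial, iterated strong-Markov/reflection input --- and then carry through the combinatorial bookkeeping showing that the reflected labels collapse to $\{-2\lambda,2\lambda-r\}$. Combined with this, some care is needed to pass thinness and property \hyperlink{tvs}{(\twonotes)} to the increasing limit $\B_r=\overline{\bigcup_j\B_r^j}$; here the uniform $2\lambda$-BTLS bound for $\B_r^j$ with respect to $\tilde\Gamma$, together with the uniform Minkowski-dimension bound of Remark \ref{rem:thin}, are what make Lemma \ref{BPLS}(3) applicable.
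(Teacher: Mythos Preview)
Your existence, law, and uniqueness arguments are essentially the paper's. The iterative construction via $\Aa_{-r,2\lambda-r}$ is identical; your reflection of the field is the dual phrasing of the paper's construction of auxiliary sets $A^k$ (the paper keeps $\Gamma$ fixed and explores $\A{r}((-1)^{k-1}\Gamma^{A^{k-1}},O)$, you keep the explored sets fixed and reflect the field --- these produce the same joint law of set and field); and the inductive uniqueness via uniqueness of TVS is the same.

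The monotonicity argument, however, has a genuine gap. The assertion that ``the $(r/2)$-grid of continue-levels refines the $r$-grid, so the $\B_{r/2}$-exploration resolves every point no later than the $\B_r$-exploration'' is a one-dimensional intuition that does not transfer. For the induction you sketch, the base case is $\B_r^1=\Aa_{-r,2\lambda-r}\subseteq\B_{r/2}^2$, and this is not obvious: neither of the intervals $[-r,2\lambda-r]$ and $[-r/2,2\lambda-r/2]$ contains the other, so TVS-monotonicity is unavailable, and the two sets are genuinely different measurable functions of $\Gamma$. Even granting Proposition~\ref{BMinterior}, the log-conformal-radius times at which the successive $\B_r^j$ land are \emph{not} the first-exit times of $[-jr,2\lambda-jr]$ for that Brownian motion (the exploration can overshoot and return), so the pathwise one-dimensional comparison you have in mind does not go through.

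The paper's route here is substantively different and supplies the missing idea: one completes $\B_{r/2}^2$ (labels $\{-r,2\lambda-r,2\lambda-r/2\}$) to $\Aa_{-r,2\lambda-r/2}$ by exploring $\Aa_{-2\lambda,r/2}$ inside the $(2\lambda-r)$-components; Lemma~\ref{keepingloops}(2) shows the new $(-r)$-loops created in this completion do not touch $\partial\D$; hence every boundary-touching $(-r)$-loop of $\Aa_{-r,2\lambda-r/2}$ was already present in $\B_{r/2}^2$; and Corollary~\ref{recons} identifies the union of those boundary-touching loops as $\A{r}=\B_r^1$. This boundary-touching characterisation of the ALE is the key input your sketch lacks --- without it the induction cannot start.
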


Let us already point out that, as the notations suggest, we will eventually (in Section \ref{B0}) take $r \to 0$, show that $\B_r \to \B_0$ and moreover use the uniqueness of $\B_r$ to prove that of $\B_0$.

\begin{rem}
The sets $\B_r$ appear as $\Upsilon(r)$ in Section 3.5 of \cite{WaWu}. They prove the measurability of these sets, but do not characterise these sets. Moreover, their construction is somewhat lengthy compared to ours.
\end{rem}	

\subsubsection{Construction}\label{construction}
For a GFF $\Gamma$ the construction goes iteratively: Define $\B^1_r:=\A{r}$. Inside the connected components $O$ of $D\backslash  \B_r^1$ labelled $-r$, explore $\A{r}(\Gamma^{\B^1_r},O)$. Define $\B^2_r$ as the closed union of the sets explored. Note that the loops of $\B^2_r$ have labels in $\{2\lambda-r, 2\lambda - 2r,-2r\}$. 

We proceed recursively: suppose we have constructed $\B^{j}_r$. Now, in the connected components $O$ of $\B^j_r$ labelled $-jr$ explore $\A{r}(\Gamma^{\B^{j}_r},O)$. Define $\B^{j}_r$ as the closed union between $\B^j_r$ and the sets explored. Note that it is easy to see from the construction that $\B_r^j \subset \B_r^{j+1}$. The limit of these iterations $\overline{ \bigcup \B_r^j}$ gives the desired set: it satisfies both conditions of the definition by construction. It is also nice to note that the law of the label of the loop surrounding any fixed point $z$ is that of $2\lambda-\ell_zr$, where $\ell_z\sim Geom(r/2\lambda)$.

\begin{figure}[ht!]    
	\centering
	\includegraphics[scale=0.5]{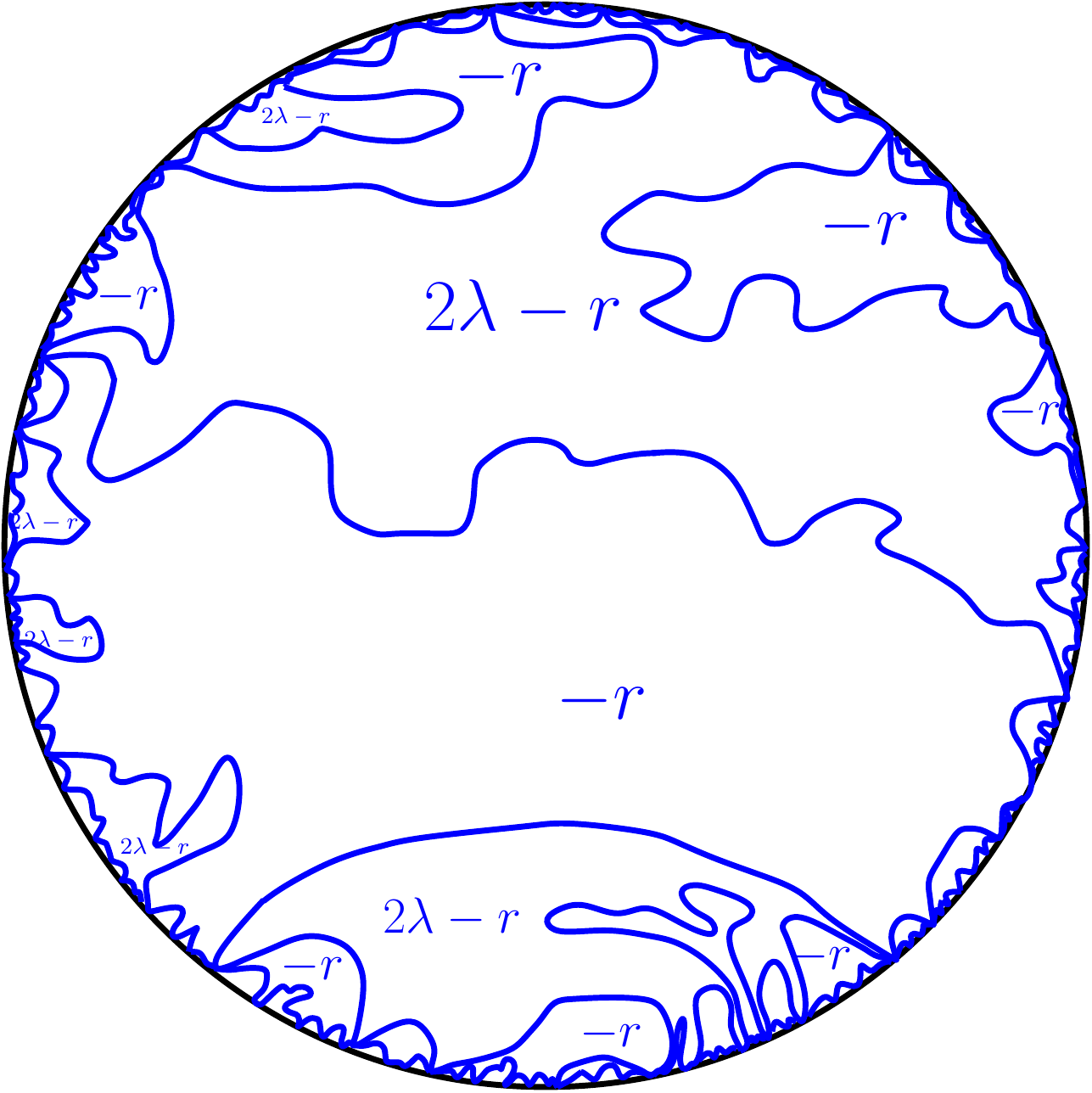}
	\includegraphics[scale=0.5]{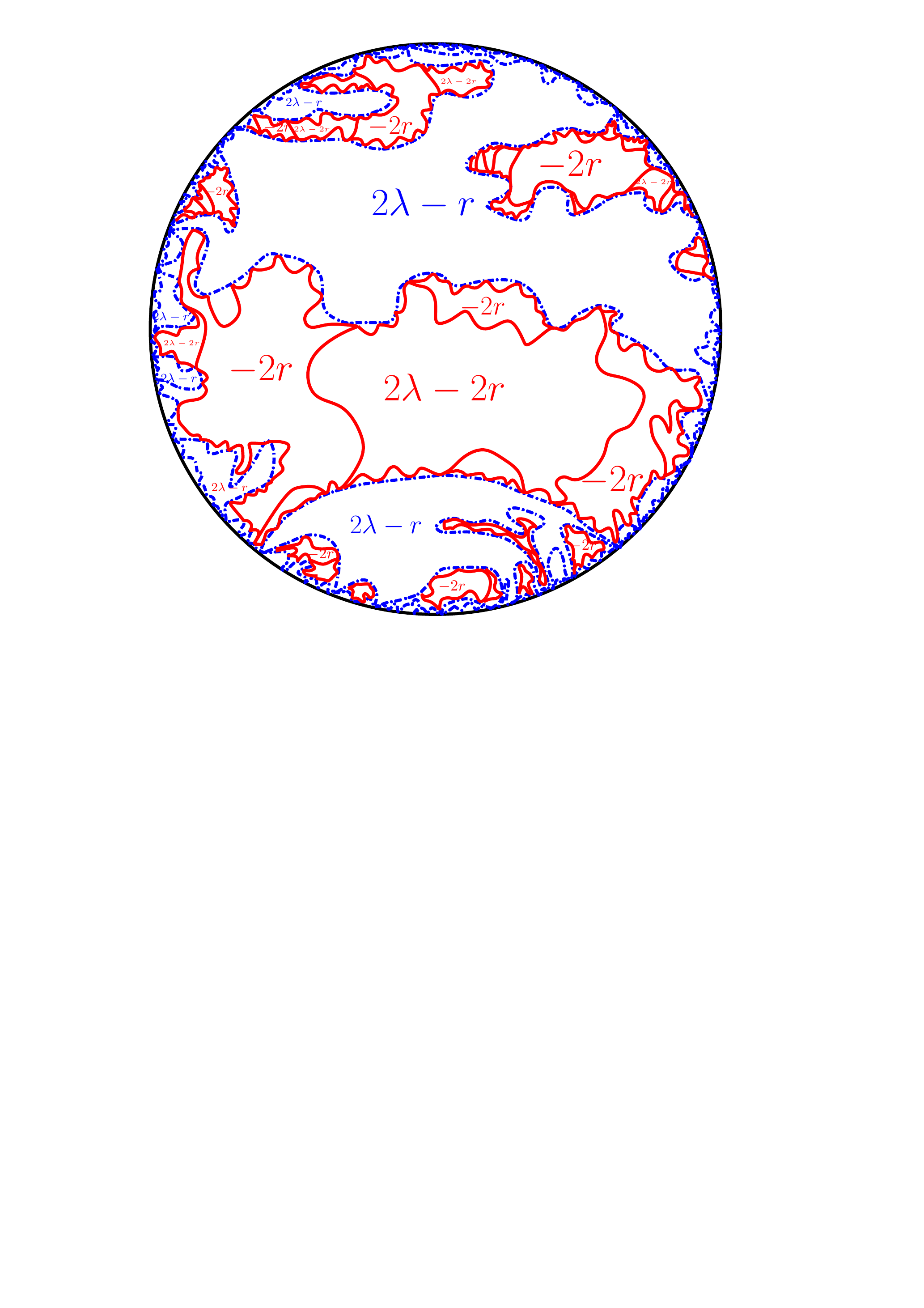}
	\caption {On the left figure we have $\B^1_r$ with its labels. On the right hand one we have iterated $\A{r}$ inside each loop labelled $-r$ and we have obtained $\B^2_r$.}
	\label{ALE const}
\end{figure}

\subsubsection{The set $\B_r$ has the same law as $\Aa_{-2\lambda, 2\lambda-r}$.} \label{same dis}Note that this property implies that the Minkowski dimension of $\B_r\cap K$ is smaller than 2, thus it is thin.

The proof is based on the following observation: $\A{r}$ has the same distribution as $\Aa_{-2\lambda+r,r}$. Indeed, $\B^2_r$ is constructed by exploring $\A{r}(\Gamma^{\A{r}},O)$ inside all connected components $O$ of $D\backslash \A{r}$ labelled $-r$. But, if instead of that we explore $\Aa_{-2\lambda+r,r}(\Gamma^{\A{r}},O)=\A{r}(-\Gamma^{\A{r}},O)$, we obtain a BTLS with the same law but whose labels takes values in $\{2\lambda - r, 0, -2\lambda\}$. We call the explored set $A^2$. Note that the loops of $A^2$ labelled $0$ correspond to the loops of $\B_r^2$ labelled $-2r$, and loops of $A^2$ labelled $-2\lambda$ to those of $\B_r^2$ labelled $2\lambda -2r$. 

We can now iterate the construction: to define $A^k$ we explore $\A{r}((-1)^{k-1}\Gamma^{A^{k-1}},O)$ in all connected components $O$ of $D\backslash A^k$ labelled either $0$ or $-r$. Inductively, the BTLS $A^{k}$ defined this way has the same distribution as $\B_r^k$. Additionally, all the loops of $A^{k}$ have labels in  $\{2\lambda -r, 0, -2\lambda\}$, and those labelled $0$  correspond to loops of $\B_r^{k}$ labelled $-kr$. We know that the sets $\B_r^k$ converge to $\B_r$ as $k \to \infty$. As any $z \in D \cap \Q$ will almost surely be contained in a connected component of $A^k$ with the label $0$ for only finitely many steps, Lemma \ref{BPLS} implies that $A^k$ converges to $\Aa_{-2\lambda, 2\lambda-r}$ and the claim follows.

\subsubsection{Uniqueness}\label{uniqueness B_r}
Suppose that $\tilde \B_r$ also satisfies the conditions of Definition \ref{def:br}  and is sampled conditionally independently of $\B_r$ given the GFF $\Gamma$. We aim to show that a.s. $\tilde \B_r = \B_r$. It is enough to prove that $\tilde \B_r^j$ from the condition (2) of the Definition \ref{def:br} is a.s. $\B_r^j$ as defined in Section \ref{construction}. 

By the uniqueness of $\A{r}$ (Lemma \ref{cledesc2}), $\B_r^1 = \tilde \B_r^1$. If we consider $\tilde \B_r^2$, then by definition the $2\lambda - r$ components are the same as in $\tilde \B_r^1$ and thus as in $\B_r^1$. Now, consider a component $O$ of $\tilde \B_r^1$ with value $-r$. We have that $\tilde \B_r^2\cap O$ is a thin local set with values in $\{-r, 2\lambda -r\}$. But these are again unique by the uniqueness of $\A{r}$. As this is also the case in the construction of $\B_r^2$, we see that the connected components with values $-2r, 2\lambda - 2r$ of $\B_r^2$ and $\tilde \B_r^2$ also agree, and thus $\B_r^2 = \tilde \B_r^2$ almost surely. Iterating the same argument gives that $\tilde \B_r^j$ and $\B_r^j$ agree.  Finally, as both $\B_r$ and $\tilde \B_r$ are thin, Lemma \ref{thin empty} implies that  $\B_r=\overline{\bigcup_j \B_r^j}=\overline{\bigcup_j \tilde \B_r^j} = \tilde \B_r$. 

\subsubsection{Monotonicity}\label{monBr}

We claim that $\B_r \subset \B_{r/2}$. 

First, note that all loops of $\B_{r/2}^2$ have labels in $\{-r, 2\lambda - r, 2\lambda - r/2\}$. Thus we can build $\Aa_{-r,2\lambda-r/2}$ by exploring $\Aa_{-2\lambda,r/2}(\Gamma^{\B_{r/2}^2},O)$ in all the connected components $O$ of $D\backslash \B_{r/2}^2$ labelled $2\lambda-r$. By Lemma \ref{keepingloops} (2), no new loop labelled $-r$ touches the boundary of the original domain. But $\B_r^1$ is equal to $\Aa_{-r,2\lambda-r}$, and we know by Lemma \ref{keepingloops} that all loops labelled $-r$ of $\Aa_{-r,2\lambda-r}$ are also loops of $\Aa_{-r,2\lambda}$ and moreover touch the boundary. Corollary \ref{recons} now implies that $\B_r^1\subseteq \B_{r/2}^2$. Now, observe that all loops labelled $-r$ of $\B_r^1$ are loops with $-r$ in $\B_{r/2}^2$. Thus using the iterative nature of the construction of $\B_r^j$ and induction on $j$, it similarly follows that $\B_r^j \subset \B_{r/2}^{2j}$ and thus the claim follows.

\subsection{Connection with two-valued sets and the distances in $G_p(\Aa_{-2\lambda, 2\lambda -r})$}\label{distances} 

We will now see how to interpret the values of $h_{\B_r}$ as distances to the boundary. More precisely we have the following result:

\begin{prop}\label{prop:lint}
Let $\Gamma$ be a GFF in $\D$. Then, then the harmonic extension $h_{\B_r}$ inside any loop $\ell$ of $\B_r$ is equal to $2\lambda-rd_p(\partial \D,\ell)$, where, $d_p(\partial \D,\ell)$ is one plus the minimum distance, in the graph $G_p(\B_r)$, of $\ell$ to a loop that touches the boundary. 
\end{prop}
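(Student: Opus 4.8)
The plan is to read the label of a loop straight off the iterative construction of $\B_r$ in Section~\ref{construction} and then identify it with a graph distance. By construction $h_{\B_r}$ is constant on each loop $\ell$ of $\B_r$: if $\ell$ first occurs as a loop of $\B_r^k$ (i.e.\ $\ell$ is a $(2\lambda-r)$--type loop of one of the $k$-th layer ALEs $\A{r}(\Gamma^{\B_r^{k-1}},O)$, $O$ a $(-(k-1)r)$--labelled component of $\B_r^{k-1}$), then $h_{\B_r}(\ell)=2\lambda-kr$; call this $k=g(\ell)$ the \emph{generation} of $\ell$. Thus the statement is equivalent to: the $G_p(\B_r)$-distance from $\ell$ to the set of boundary-touching loops equals $g(\ell)-1$.

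First I would prove the upper bound $d_p(\partial\D,\ell)\le g(\ell)$ by induction on $k=g(\ell)$. For $k=1$, $\ell$ is a loop of $\B_r^1=\A{r}$, hence touches $\partial\D$ by Remark~\ref{ALE}, so $d_p(\partial\D,\ell)=1$. For $k\ge 2$, $\ell$ is a loop of the ALE built inside some $(-(k-1)r)$--component $O$ of $\B_r^{k-1}$; since $O$ arose as a $(-r)$--type loop of the $(k-1)$-st layer, $\partial O$ contains arcs of generation-$(k-1)$ loops (its $(2\lambda-r)$--type ALE-neighbours at that step). Using that $\A{r}(\Gamma^{\B_r^{k-1}},O)$ is $G_p$-connected (Theorem~\ref{mainthm}(1)), that all its loops meet $\partial O$ (Remark~\ref{ALE} in the domain $O$), and that its $(-r)$--type loops are subdivided at the next step into further loops meeting $\partial O$, one connects $\ell$ to a generation-$(k-1)$ loop by a point-connected path staying in $\overline O$. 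The induction hypothesis gives $d_p(\partial\D,\ell)\le 1+(k-1)=k$.

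The lower bound is the heart of the matter and rests on two facts, which I would establish simultaneously by induction on the generation: (a) a loop of $\B_r$ touches $\partial\D$ if and only if its generation is $1$ (equivalently $h_{\B_r}=2\lambda-r$); and (b) if two loops of $\B_r$ are point-connected then their generations differ by at most $1$. Granting (a) and (b), any path in $G_p(\B_r)$ from $\ell$ to a boundary-touching (hence generation-$1$) loop changes the generation by at most $1$ per edge, so it has at least $g(\ell)-1$ edges; with the upper bound this yields $d_p(\partial\D,\ell)=g(\ell)$, and substituting $h_{\B_r}(\ell)=2\lambda-g(\ell)r$ concludes. For (a): generation-$1$ loops are loops of $\A{r}$ and touch $\partial\D$; every loop of higher generation lies inside a $(-r)$--component $O$ of $\A{r}=\B_r^1$, and there, conditionally on $\B_r^1$, the trace of $\B_r$ has the same law as the trace of $\Aa_{-2\lambda,2\lambda-r}$ (by the reflection argument of Section~\ref{same dis}), whose loops all avoid $\partial\D$: indeed by Lemma~\ref{keepingloops}(2) the only boundary-touching loops of $\Aa_{-2\lambda,2\lambda-r}$ are the $(2\lambda-r)$--loops of $\A{r}$ (the $-2\lambda$ side gives no boundary-touching loop since $2\lambda\not<2\lambda$), and these lie outside $\overline O$. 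For (b): a generation-$(k+1)$ loop is a surviving ($(2\lambda-r)$--type) loop of the ALE built inside a $(-kr)$--component $O$ of $\B_r^k$, and one must show that such a loop meets $\partial O$ only along generation-$k$ loops, never along a generation-$j$ loop with $j\le k-1$ nor along $\partial\D$; the $(-r)$--type loops of that ALE (subdivided at the next step) absorb all contact of $\overline O$ with lower-generation loops and with $\partial\D$. Here the induction hypotheses (a),(b) for smaller generations are combined with the precise description of where the loops of an ALE meet the boundary of its defining domain (Remark~\ref{ALE}, Lemma~\ref{keepingloops}(2), and the reflection device of Section~\ref{same dis}).

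The main obstacle is exactly step (b) of the lower bound — showing that one cannot ``shortcut'' to the boundary, i.e.\ that point-connected loops have adjacent generations. This demands careful bookkeeping of the nested boundaries $\partial O$ of the components being subdivided, and a precise understanding of which parts of $\partial O$ are touched by the surviving loops of the ALE inside $O$ versus its to-be-subdivided loops; the same-law/reflection identification with $\Aa_{-2\lambda,2\lambda-r}$ together with Lemma~\ref{keepingloops}(2) is the tool that makes this tractable.
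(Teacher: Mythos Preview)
Your strategy is essentially the paper's: identify the label with the generation $g(\ell)$ from the iterative construction, and reduce everything to the two facts (a) a loop touches $\partial\D$ iff its generation is $1$, and (b) point-connected loops have generations differing by at most $1$. The paper packages this into a single key lemma (Lemma~\ref{Distance Br boundary}: generation-$1$ loops touch $\partial\D$, generation-$2$ loops do not) and then says Proposition~\ref{prop:lint} ``follows by the iterative nature of the construction'' --- your upper bound and your (b) are exactly the unpacking of that phrase.

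One point where your write-up is looser than the paper's: your justification of (a) for generations $\ge 2$. You invoke that the trace of $\B_r$ inside a $(-r)$-component $O$ of $\B_r^1$ ``has the same law as the trace of $\Aa_{-2\lambda,2\lambda-r}$'', and then say its loops avoid $\partial\D$. But loops of $\Aa_{-2\lambda,2\lambda-r}$ in $O$ \emph{do} touch $\partial O$, so you still have to argue they miss the portion $\partial O\cap\partial\D$. The paper's route here is cleaner and is what you should actually do: use the reflection coupling of Section~\ref{same dis} at the level of $\B_r^2$, so that $\B_r^2$ has the same law as a BTLS $A^2$ with labels in $\{2\lambda-r,0,-2\lambda\}$, in which the generation-$2$ loops of $\B_r^2$ correspond to the $-2\lambda$-loops of $A^2$. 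Since $A^2$ can be completed to $\Aa_{-2\lambda,2\lambda-r}$ in $\D$, Lemma~\ref{keepingloops}(2) (the $-2\lambda$ side) gives directly that those loops miss $\partial\D$. Once you have this, your (b) follows immediately by iterating inside the nested components $O_{k-2}$, exactly as you outline: a generation-$k$ loop is a generation-$2$ loop of the copy of $\B_r$ inside $O_{k-2}$, hence misses $\partial O_{k-2}$, and since all loops of generation $\le k-2$ lie on or outside $\partial O_{k-2}$, no shortcut is possible.
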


As $\B_r$ has the law of $\Aa_{-2\lambda, 2\lambda -r}$ and thus we can interpret this result by saying that the labels of $\B_r$ encode the $G_p$ distances of the loops of $\Aa_{-2\lambda, 2\lambda -r}$ to the boundary. Proposition \ref{prop:lint} follows from the following lemma:

\begin{lemma}\label{Distance Br boundary}
	Let $\Gamma$ be a GFF in $\D$, and consider the local set $\B_r$ of Definition \ref{def:br}. Then, almost surely, all loops of $\B_r$ labelled $2\lambda-r$ touch the boundary and all loops labelled $2\lambda - 2r$ do not touch the boundary. 
\end{lemma}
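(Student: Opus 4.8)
The plan is to treat the two claims separately and in each case reduce to facts already at our disposal. For the loops labelled $2\lambda-r$: inspecting the construction of $\B_r$ in Section \ref{construction}, in the passage from $\B_r^j$ to $\B_r^{j+1}$ only the components labelled $-jr$ are subdivided, and the loops created there carry labels $-(j+1)r$ or $2\lambda-(j+1)r$. Hence a loop labelled $2\lambda-r$ can only appear at the first step, i.e.\ as a loop of $\B_r^1=\A{r}$, and once created it is never subdivided (because $2\lambda-r\notin\{-jr: j\ge 1\}$). Therefore the loops of $\B_r$ labelled $2\lambda-r$ are exactly the loops of the ALE $\A{r}$ labelled $2\lambda-r$, and by Remark \ref{ALE} every loop of an ALE meets $\partial\D$. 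This settles the first claim.

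By the same bookkeeping, the label $2\lambda-2r$ appears only among the loops of $\B_r^2$, and such a loop is never subdivided afterwards since $2\lambda-2r\neq -jr$ for every $j\ge 1$ and every $r\in(0,2\lambda)$; so it suffices to show that a.s.\ no loop of $\B_r^2$ labelled $2\lambda-2r$ touches $\partial\D$. Rather than analysing $\B_r^2$ directly, I would use the companion set $A^2$ from Section \ref{same dis}: inside each component $O$ of $\D\setminus\A{r}$ labelled $-r$ one explores $\Aa_{-2\lambda+r,r}(\Gamma^{\A{r}},O)=\A{r}(-\Gamma^{\A{r}},O)$ in place of $\A{r}(\Gamma^{\A{r}},O)$. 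Conditionally on $\A{r}$ the fields $\Gamma^{\A{r}}|_O$ are independent zero-boundary GFF's, hence symmetric, so simultaneously negating all of them is law-preserving; tracking the harmonic values shows that a loop of $\A{r}(\Gamma^{\A{r}},O)$ with value $2\lambda-r$ (w.r.t.\ $\Gamma^{\A{r}}|_O$) — that is, a loop of $\B_r^2$ labelled $(2\lambda-r)+(-r)=2\lambda-2r$ — is mapped to a loop of $\A{r}(-\Gamma^{\A{r}},O)=\Aa_{-2\lambda+r,r}(\Gamma^{\A{r}},O)$ with value $-(2\lambda-r)$, i.e.\ a loop of $A^2$ labelled $-(2\lambda-r)+(-r)=-2\lambda$. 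Consequently
\[
\bigl(\B_r^2,\ \{\text{loops of }\B_r^2\text{ labelled }2\lambda-2r\}\bigr)\ \overset{d}{=}\ \bigl(A^2,\ \{\text{loops of }A^2\text{ labelled }-2\lambda\}\bigr).
\]

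It remains to prove that a.s.\ no loop of $A^2$ labelled $-2\lambda$ meets $\partial\D$. As recalled in Section \ref{same dis}, one obtains $\Aa_{-2\lambda,2\lambda-r}$ by continuing the construction of $A^2$ (subdividing the components labelled $0$); the loops labelled $-2\lambda$ are never subdivided in this process, so each of them is a genuine loop of $\Aa_{-2\lambda,2\lambda-r}$ with label $-2\lambda$. By Lemma \ref{keepingloops}(2), applied with $a=2\lambda$ — for which the condition $a<2\lambda$ fails — no loop of $\Aa_{-2\lambda,2\lambda-r}$ labelled $-2\lambda$ touches the boundary. (Equivalently, by monotonicity $\Aa_{-2\lambda,2\lambda-r}\subset\Aa_{-2\lambda,2\lambda}$ and, by Lemma \ref{keepingloops}(1), such a loop stays a loop of $\Aa_{-2\lambda,2\lambda}$, which by Proposition \ref{BPCLE}(2) does not meet $\partial\D$.) Hence no loop of $A^2$ labelled $-2\lambda$, and therefore — by the equality in law above — no loop of $\B_r^2$ labelled $2\lambda-2r$, and so no loop of $\B_r$ labelled $2\lambda-2r$, touches the boundary.

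The main obstacle, and the reason for routing the argument through $A^2$, is exactly this last point. Tackled head on, a loop of $\B_r$ labelled $2\lambda-2r$ is a loop of the inner ALE $\A{r}(\Gamma^{\A{r}},O)$ inside a component $O$ of $\D\setminus\A{r}$ with label $-r$, and such a loop does touch $\partial O$; since $\partial O$ is itself a Jordan curve of $\A{r}$ which meets $\partial\D$ along a fractal set (of dimension $1-(2-r/\lambda)^2/4$, by Corollary \ref{cor bd}), ruling out that the inner loop reaches one of those points would amount to controlling the intersection of two conditionally independent random fractal subsets of the curve $\partial O$, and a naive dimension count is inconclusive. The relabelling trick circumvents this: it turns the loops in question into $(-2\lambda)$-loops, which lie strictly inside other loops of the limiting $\CLE_4$-type set $\Aa_{-2\lambda,2\lambda-r}$ and therefore cannot reach $\partial\D$ at all.
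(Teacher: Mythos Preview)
Your proof is correct and follows essentially the same route as the paper: both use that $2\lambda-r$ loops come from $\B_r^1=\A{r}$ (hence touch $\partial\D$), and for the second claim both pass via the relabelled set $A^2$ from Section~\ref{same dis} to identify $2\lambda-2r$ loops of $\B_r^2$ with $-2\lambda$ loops of $A^2$, then complete $A^2$ to $\Aa_{-2\lambda,2\lambda-r}$ and invoke Lemma~\ref{keepingloops}(2). Your write-up is more explicit (and your final explanatory paragraph nicely motivates the detour through $A^2$), but the argument is the same.
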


Indeed Proposition \ref{prop:lint} now follows by the iterative nature of the construction of $\B_r$: the loops with the label $2\lambda - nr$ of $\B_r$ have the same law as the loops with label $2\lambda - r$ of a copy of $\B_r$ inside the components of the complement of $\B_r^{n-1}$ labelled $-r(n-1)$. Moreover, they also have the same law as the loops with label $2\lambda -2r$ inside the components of the complement of $\B_r^{n-2}$ labelled $-r(n-2)$.

\begin{proof}[Proof of Lemma \ref{Distance Br boundary}]
	All loops labelled $2\lambda-r$ stem from loops of $\B_r^1=\A{r}$, thus the first claim is true. The second claim follows from the first paragraph in Section \ref{same dis}. Indeed, loops with the label $2\lambda - 2r$ stem from $\B_r^2$. In this paragraph it is shown that
	\begin{itemize}
	\item $\B_r^2$ has the same law as a bounded type local set $A$ with labels in $\{2\lambda-r,0, -2\lambda\}$, 
	\item the loops of $\B_r^2$ with the label $2\lambda - 2r$ correspond to the loops of $A$ with the label $-2\lambda$. 
	\end{itemize}
	But by Lemma \ref{keepingloops} (2) (as $A$ can be always completed to $\Aa_{-2\lambda,2\lambda-r}$ by exploring $\Aa_{-2\lambda, 2\lambda - r}$ in the loops with label $0$) the loops of $A$ with label $-2\lambda$  do not touch the boundary of the domain.
\end{proof}

\begin{rem}
Notice that the labels of the loops are a function of the distances in $G_p(\B_r)$ and thus in particular the labels are a measurable function of $\B_r$. Moreover, notice also that the proposition above should also give the right scaling for distances between any two loops: one would define $\bar d_r:=rd_p$ in $\Lo(\B_r)$ as $r$ times the minimum distance between the two loops. 
\end{rem}

\subsection{The coupling with labelled CLE$_4$: existence, uniqueness and labels}\label{B0}

In this subsection we prove the Proposition \ref{prop:labelCLE} and interpret the labels in the coupling of the GFF with the labelled CLE$_4$ as distances to the boundary:

\subsubsection{Existence}
The existence of this set is shown by taking a limit of $\B_r$ as $r \downarrow 0$. Indeed, recall that we have $\B_r \subset \B_{r/2}$. We define  $\B_0:=\overline{\bigcup \B_{d_n}}$ for $d_n = 2^{-n}$. Now, thanks \ref{same dis}, $\B_0$ has  the same law as a CLE$_4$, thus its Minkowski dimension is smaller than 2 and it is thin. Lemma \ref{BPLS} implies that $h_{\B_0}$ takes the right values. Thus (1), of Definition \ref{defB0} is satisfied.

Let us prove that the condition (2) is also satisfied. For any dyadic $s$ there is some $n_s$ such that for $n > n_s$ we have that $s / d_{n_s} \in \N$. By the properties of $\B_r$ we see that for all $n > n_s$, $\B_{d_n}$ satisfies the second condition of Definition \ref{defB0} for this $s$, and thus does also $\B_{0}$

\subsubsection{Uniqueness} We next consider the uniqueness of the local set coupling with the labelled $CLE_4$. The key lemma is the following:
\begin{lemma}\label{lemunB0}
	Let $\tilde \B_0$ be coupled with $\Gamma$ as in Definition \ref{defB0}. Then $\B_{d_n}$ is contained in $\tilde \B_0$ for all dyadic $d_n$. Additionally, if two loops of $\tilde \B_0$  labelled $\tilde l_1$ and $\tilde l_2$ are surrounded by the same loop of $\B_{d_n}$ labelled $l$, then both $\tilde l_1 \leq l+d_n$ and $\tilde l_2 \leq l + d_n$.
\end{lemma}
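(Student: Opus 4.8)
The plan is to prove the two assertions together by induction on the number of layers $j$ in the iterative construction of $\B_{d_n}$ given in Section~\ref{construction}; write $r:=d_n$, and recall that $\B_r^1=\A{r}$, that $\B_r^{j+1}$ is obtained from $\B_r^j$ by exploring a fresh copy of $\A{r}$ inside each component of $\D\setminus\B_r^j$ carrying the label $-jr$ (the components labelled $2\lambda-kr$, $1\le k\le j$, being frozen), and that $\B_r=\overline{\bigcup_j\B_r^j}$. The induction hypothesis at stage $j$ that I would carry is: $\B_r^j\subseteq\tilde\B_0$; each loop of $\B_r^j$ with label $2\lambda-kr$ surrounds only loops of $\tilde\B_0$ of time parameter strictly larger than $(k-1)r$ (equivalently, of label $<(2\lambda-kr)+r$); and inside every active component $O$ of $\D\setminus\B_r^j$ (those with $h_{\B_r^j}\equiv-jr$) all loops of $\tilde\B_0$ have time $>jr$ while the trace of $\tilde\B_0$ on $O$, viewed as a local set of the GFF $\Gamma_O:=\Gamma^{\B_r^j}|_O$ in $O$, is again a local set satisfying Definition~\ref{defB0}, but with every time parameter shifted down by $jr$. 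The case $j=0$, with $\B_r^0:=\partial\D$ and $O=\D$, is vacuous. Once the hypothesis is established for all $j$, letting $j\to\infty$ and combining the monotone convergence of local sets (Lemma~\ref{BPLS}(3)) with the thinness of $\B_r$ and $\tilde\B_0$ and Lemma~\ref{thin empty} (to discard any spurious boundary points) gives $\B_r\subseteq\tilde\B_0$; and since every loop of $\B_r$ is a frozen loop of some $\B_r^j$, the time estimate in the hypothesis is exactly the second assertion of Lemma~\ref{lemunB0}.

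The engine is a single comparison carried out inside an active component $O$, already in the first instance $j=0\to j=1$ (comparing $\A{r}$ with $\tilde\B_0$). Working with $\Gamma_O$ and with $\tilde\B_0\cap O$ regarded (after the time-shift) as a Definition~\ref{defB0}-set for $\Gamma_O$, consider the part $W$ of $\tilde\B_0\cap O$ consisting of the loops of shifted time at most $r$: by Definition~\ref{defB0}(2) it is a bounded type local set of $\Gamma_O$ whose harmonic function takes values in $\{-r\}\cup\{2\lambda-s:0\le s\le r\}$, hence $\ge-r$ throughout and $\ge 2\lambda-r>\lambda$ on its frozen components (we are in the regime $r<\lambda$). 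Exploring $W$ first and then running the $(-r,-r+2\lambda)$-level lines of the ALE construction of Section~\ref{Cons basic A} inside the complement of $W$, a non-entering argument of the type of Lemma~\ref{donotenter} (the boundary value of $h_W+(r-\lambda)$ on a frozen component being $\ge\lambda$) confines all the level lines to the deep ($-r$) part of $O\setminus W$; iterating this comparison along the internal layers of the ALE construction (each of which costs one further unit $r$ of shifted time, and each loop of the ALE being finalized after finitely many such layers) and invoking the uniqueness of two-valued sets of Proposition~\ref{cledesc2}, together with Remark~\ref{rem: 2 boundary values} for the components whose boundary data is piecewise constant changing twice, one identifies the resulting object with $\A{r}(\Gamma_O,O)$ and obtains $\A{r}(\Gamma_O,O)\subseteq\tilde\B_0\cap O$. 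The bookkeeping then closes the induction: the $-r+2\lambda$-components of $\A{r}(\Gamma_O,O)$ absorb precisely the frozen components of $W$ (the loops of $\tilde\B_0$ of time in $(jr,(j+1)r]$) together with deeper loops only, so any loop of $\tilde\B_0$ surrounded by a newly frozen loop of $\B_r^{j+1}$ has time $>jr$; while the $-r$-components of $\A{r}(\Gamma_O,O)$ sit inside the deep part of $W$, hence contain only loops of time $>(j+1)r$, and — by the Markovian/target-independent structure of the labelled CLE$_4$ recorded after Theorem~\ref{thmwawu} — $\tilde\B_0$ restricted to each of them is once more a further-shifted Definition~\ref{defB0}-set. (This is essentially the mechanism behind the monotonicity proof $\B_r\subset\B_{r/2}$ in Section~\ref{monBr}, where Corollary~\ref{recons} and Lemma~\ref{keepingloops} play the role of the identification step.)

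The main obstacle is that Lemma~\ref{donotenter} requires the bounded type local set and the level line to be conditionally independent, whereas here $W$ and the ALE level lines are both measurable functions of $\Gamma_O$. I would circumvent this by working strictly in the order ``explore $W$, then explore inside the complement of $W$'': once $W$ is explored, the residual field is a fresh GFF in each complementary component and, by the Markov property of the labelled CLE$_4$ exploration, the loops of $\tilde\B_0$ of larger (shifted) time form an independent labelled CLE$_4$ inside each of them, so both the non-entering estimate and the ALE reconstruction can be run in this conditional picture; one then has to verify, via uniqueness of two-valued sets, that the object so produced really is $\A{r}(\Gamma_O,O)$ and not a strictly smaller local set. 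An alternative is to let the shifted time threshold go to $0$ and use that the corresponding windows shrink to $\partial O$ — a thin local set with non-negative harmonic function being empty (Lemma~9 of \cite{ASW}) — which makes the initial exploration harmless. The remaining care is entirely in the bookkeeping: one must check that the time threshold rises by exactly one unit $r$ per layer of the $\B_r$-construction (not per internal layer of the ALE construction, whose many layers are absorbed together), so that the estimate in Lemma~\ref{lemunB0} comes out with the factor $l+d_n$ and not something weaker.
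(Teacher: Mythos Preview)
Your inductive framework is essentially the same as the paper's: prove $\B_r^j\subseteq\tilde\B_0^{jr}$ and that each $-jr$-component of $\B_r^j$ is also a $-jr$-component of $\tilde\B_0^{jr}$, then iterate. The second assertion of the lemma then falls out exactly as you say. So the skeleton is right.

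The gap is in the ``engine'', i.e.\ in how you prove $\A{r}\subseteq\tilde\B_0^r$ (and that the $-r$-components coincide). Your non-entering argument via Lemma~\ref{donotenter} shows only that the level lines of the $\A{r}$-construction do not enter the \emph{frozen} components of $W=\tilde\B_0^r$; it does \emph{not} show that these level lines lie in $W$. Being confined to the closure of the $-r$-part of $O\setminus W$ is a statement about where the lines are \emph{not}, not about where they are, and the subsequent ``identification via uniqueness'' is left hanging: you never produce a thin local set with values in $\{-r,2\lambda-r\}$ out of $W$ that you could compare to $\A{r}$. You also need, for the recursion, that every $-r$-component of $\A{r}$ is \emph{exactly} a $-r$-component of $W$ (not merely contained in one), otherwise the claim that $\tilde\B_0\cap O'$ is a time-shifted Definition~\ref{defB0} set fails.

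The paper closes this gap by working in the opposite direction. Starting from $W=\tilde\B_0^r$, it \emph{completes} $W$ to $\Aa_{-r,2\lambda}$ by exploring $\Aa_{-r+t-2\lambda,t}$ inside each frozen component of $W$ (label $2\lambda-t$, $0\le t\le r$); uniqueness of $\Aa_{-r,2\lambda}$ then gives $W\subseteq\Aa_{-r,2\lambda}$. The new $-r$-loops created in this completion lie inside frozen components and so cannot touch $\partial O$ (Lemma~\ref{keepingloops}(2)); hence every boundary-touching $-r$-loop of $\Aa_{-r,2\lambda}$ was already a $-r$-loop of $W$. Corollary~\ref{recons} identifies the union of these boundary-touching loops with $\A{r}$, yielding $\A{r}\subseteq W$ and, simultaneously, the exact matching of $-r$-components needed to restart the induction. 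You do cite Corollary~\ref{recons} and Lemma~\ref{keepingloops} parenthetically at the end, which is the right instinct; but this completion step \emph{is} the argument, not an aside, and it replaces the non-entering discussion entirely.

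Finally, your worry about conditional independence is a red herring: two local sets that are both measurable functions of $\Gamma$ are trivially conditionally independent given $\Gamma$, so Lemma~\ref{donotenter} applies without any workaround. The real issue is that its conclusion is simply not strong enough for what you need.
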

\begin{proof}
	It suffices to prove the first claim for a some fixed dyadic $d_n > 0$, that we denote by $r$ for simplicity. To show that $\B_{r}$ is contained in $\tilde \B_0$, we will consider the approximations $\tilde \B_0^{rk}$ of $\B_0$ as defined in Definition \ref{defB0}, and the approximations $\B_r^k$ of $\B_r$ as defined in Definition \ref{def:br}.
	
	First, let us see that $\B_{r}^1 = \A{r}$ is contained in $\tilde \B_0$. By condition (2) in the Definition \ref{defB0}, we see that $\tilde \B_0^r$ is a BTLS with $h_{\tilde \B_0^r}\in\{-r\}\cup\{2\lambda - t:0\leq t\leq r\}$. Now, in every connected component $O$ of the complement of $\tilde \B_0^r$, with label $2\lambda - t$ with $t$ as above, we can explore $\Aa_{-r+t-2\lambda, t}$ of the GFF $\Gamma^{\tilde \B_0^r}$ restricted to $O$ to obtain $\Aa_{-r,2\lambda}$. This means that we can construct $\Aa_{-r,2\lambda}$ starting from $\tilde \B_0^r$. By the uniqueness of $\Aa_{-r,2\lambda}$, it follows that $\tilde \B_0^r \subset \Aa_{-r,2\lambda}$. Notice that the loops of $\Aa_{-r,2\lambda}$ with the label $-r$ constructed inside such loops (i.e. inside the loops of $\tilde \B_0^r$ with value in $2\lambda - t$) cannot touch the boundary of the domain (exactly for the same reason as in Section \ref{monBr}). Hence, all the boundary touching loops of $\Aa_{-r,2\lambda}$ are in fact loops of $\tilde \B_0^r$ with the label $-r$. But by Corollary \ref{recons}, the union of the loops of $\Aa_{-r,2\lambda}$ with label $-r$ is equal to $\A{r}$. Thus, we deduce that $\B_{r}^1=\A{r} \subset \tilde \B_0^r$. 
	
	We saw that any connected component $O$ of $D\backslash \B_r^1$ with label $-r$ is also a connected component of $\tilde \B_0^r$ labelled $-r$. Now, consider any such component $O$. Then $(\B_r^2\backslash \B_r^1)\cap O$ is equal to $\B_r^1(\Gamma^{\B_r^1},O)$. Moreover, we claim that conditionally on $\tilde \B_0^r$, the set $\hat \B_0 :=( \tilde \B_0 \backslash \tilde \B_0^r)\cap O$ satisfies the conditions of Definition \ref{defB0} for the GFF $\Gamma^{\tilde B_0^r}$ restricted to $O$. Indeed, by Lemma \ref{BPLS} (1) it is a local set of $\Gamma^{\B_0^r}$ restricted to $O$ and satisfies conditions (1) and (2) of Definition \ref{defB0}. Furthermore, it is thin by Corollary 4.4 of \cite{Se}. Thus, by the previous paragraph $\B_r^1(\Gamma^{\B_r^1},O)\subseteq \hat \B_0^r$, which in turn implies that $\B_r^2 \subseteq \tilde \B_0^{2r}$. Iterating this way we see that $\B_r^k \subset \tilde \B_0^{kr}$ for all $k$, which by taking the limit in $k$ shows that $\B_r \subset \tilde \B_0$.
	
	Let us now prove the final statement of the lemma, setting again $r = d_n$. Denote by $\ell$ the loop of $\B_r$ labelled $l$ surrounding both $\tilde \ell_1$ and $\tilde \ell_2$. Take $k \in \N$ such that $kr=2\lambda-l$ and define $O$ to be the connected component of the complement of $\B_{r}^{k-1}$ whose closure contains $\ell$. Due to the fact that the label of $O$ is $-(k-1)r$, we deduce form the last paragraph that it is also a connected component of $\tilde \B_0^{(k-1)r}$ with the same label. Using the definition of $\tilde \B_0^{(k-1)r}$ and the fact that $\tilde \ell_1$ and $\tilde \ell_2$ are also contained in the closure of $O$, we obtain that $l_1,l_2\leq 2\lambda-(k-1)r=l+r$.
	 
\end{proof}

Let us now show how uniqueness follows from the above lemma. As $\tilde \B_0$ is thin and hence by Lemma \ref{thin empty} has empty iterior, it can be written as the closure of the union of its loops. Lemma \ref{lemunB0} shows that $\B_0:=\overline{\bigcup \B_{d_n}} \subset\tilde \B_0$. Thus, it is enough to prove that each loop of $\B_0$ surrounds no more than one loop of $\tilde \B_0$. 

Suppose for contradiction that with positive probability there is one loop $\ell$ of $\B_0$ labelled $l$ that surrounds two or more loops of $\tilde B_0$. Then, by the final part of above lemma it can only surround loops that have label smaller than $l$. In particular $|h_{\B_0}-2\lambda|\geq |h_{\tilde \B_0\cup \B_0}-2\lambda|$ and we can use Lemma 9 of \cite{ASW} to conclude that $\tilde \B_0\subseteq \B_0$. \footnote{In fact, Lemma 9 of \cite{ASW} asks for the sets to be BTLS. However, one can check that it is just enough to have thin local set whose harmonic functions are upper bounded.}

\subsubsection{$\B_0$ has the law of labelled CLE$_4$} 
In this section we will explain why $\B_0$ has the law of labelled CLE$_4$. In fact, this just follows from the uniqueness statement of Proposition \ref{prop:labelCLE} and Theorem \ref{thmwawu} as one can verify that any local set coupling with the labelled CLE$_4$ has to satisfy Definition \ref{defB0}. 

For self-containedness, we will sketch how to prove it by hand. Note that we have also already proven that $\B_0$ as a random set has the law of a CLE$_4$ carpet. So it just remains to argue that the label of a loop correspond to the time when it appears in the Poisson point process described in Section \ref{intro label CLE}. The details for each step of this sketch can be found in Section 3 of \cite{WaWu}:
\begin{itemize}
\item The Renewal / Markov property, inherent to a PPP, comes from the local set property of $\B_0^s$ in Definition \ref{defB0}: inside each component of the complement of $\B_0^s$ with the label $-s$, one explores an independent copy of $\B_0$. 
\item From the construction it follows that all the labels $2\lambda - t_\ell$ of the set $\B_0$ are different and thus the loops can be considered as a simple $\sigma-$discrete point process indexed by $t_\ell$.
\item It is known that such a renewal point process is a Poisson point process and is uniquely defined by its characteristic measure (see for example Theorem 3.1 of \cite{Ito}); and that the characteristic measure can be recovered by looking at the law of any instance of the point process.
\item It thus remains to verify that a loop with label $t_\ell$ can be seen as an instance from the symmetric measure $M$ on pinned SLE$_4$ loops described in the beginning of Section \ref{intro label CLE}. This is the part that requires some work, but it can be argued using the approximation by $\B_r$. Roughly, a loop of $\B_r$ that first appears in $\B_r^n$, is given by a SLE$_4(\rho)$ loop in a connected component of the complement of $\B_r^{n-1}$. These loops touch the boundary, and as $r \to 0$, these loops converge to the pinned loop measure $M$. The proof uses the fact that $\rho \to 0$ as $r \to 0$ and the characterization of $M$ via the Markov property.
\end{itemize}

\subsubsection{CLE$_4$ labels as distances to the boundary} 

Finally, notice that Proposition \ref{prop:lint} gives us also a way to interpet the labels of $\B_0$ as distances to the boundary: $\B_r$ converges to $\B_0$ and $h_{\B_r} \to h_{\B_0}$. As the label of a loop $\ell$ of $\B_r$ is given by $2\lambda - rd_\ell$, where $d_\ell$ is the $G_p$ distance to the boundary of the loop $\ell$, one can see the labels of $\B_0$ as rescaled distances to the boundary. In fact, in \cite{ALS3} we will see that they correspond exactly to the local time distances to the boundary, defined in metric graph GFFs in \cite{LuW}.

In a similar spirit, one should be able to define a conformally invariant distance between any two loops of CLE$_4$ by taking a limit as $r \to 0$ of $\bar d_r:=rd_p$, an thereby an interesting metric space. One can obtain tightness for the distance between any pair of loops surrounding a countable dense set $z_n \in \D$, and thus this seems to be the right scaling. Indeed, this follows from considering the slightly easier distance structure, corresponding to the distances in the ``geodesic tree'' from the boundary: one can define a distance $\tilde{d}(\ell_1,\ell_2)=t_1+t_2-2T$ where $T$ is the biggest $t$ such that $\B_0^t$ has a loop that surrounds both $\ell_1$ and $\ell_2$. However, as for now we run short of proving any convergence of actual interest.

Let us mention that a different approach to a conformally invariant distance between loops of CLE$_4$, that should correspond to the one described just above, has been announced in \cite{WaWu} Section 1.3.

\subsection* {Acknowledgements}  We would like to thank W. Werner for several interesting discussions on ALEs and for pointing out a simpler way to derive Proposition \ref{Perco}. We would also like to thank T. Lupu for fun discussions, F. Viklund for useful comments on an earlier draft, J. Miller for pointing out the question on the SLE$_4$ fan and the anonymous referee for the careful reading of our manuscript and all the helpful comments. A. Sepúlveda is supported by the ERC grant LiKo 676999, Juhan Aru is supported by the SNF grant 175505. Both authors are also thankful to the SNF grant 155922 and the NCCR Swissmap initiative.

\bibliographystyle{alpha} 
\bibliography{biblio}

\end{document}